\documentclass[reqno,11pt]{amsart}
\usepackage[top=2.0cm,bottom=2.0cm,left=3cm,right=3cm]{geometry}
\usepackage{amsthm,amsmath,amssymb,dsfont}
\usepackage{mathrsfs,amsfonts,functan,extarrows,mathtools}
\usepackage[colorlinks]{hyperref}
\usepackage{marginnote}
\usepackage{xcolor}
\usepackage{stmaryrd}
\usepackage{esint}
\usepackage{graphicx}
\usepackage{bbm}
\usepackage{tikz}
\usetikzlibrary{positioning}
\usepackage{algorithm}
\usepackage{algpseudocode}
\usepackage{caption}
\usepackage{subcaption}
\usepackage{comment}
\usepackage[normalem]{ulem}

\newtheorem{theorem}{Theorem}[section]
\newtheorem{proposition}{Proposition}[section]

\newtheorem{lemma}{Lemma}[section]
\newtheorem{assumption}{Assumption}[section]
\numberwithin{equation}{section}
\allowdisplaybreaks
\def\d{\mathrm{d}}
\def\R{\mathbb{R}}
\def\E{\mathbb{E}}

\def\CLip{C_{\text{Lip}}}
\def\Cbdd{C_{\text{bdd}}}

\def\eps{\varepsilon}

\def\l{\langle}
\def\r{\rangle}
\def\exp{\mathrm{exp}}

\newcounter{wronumber}

\begin{document}
\title[Inference of interacting kernel]{Inference of interacting kernel in the mean-field regime}
\author[Chen]{Peiyi Chen}
\address[Peiyi Chen]
		{\newline Department of Mathematics, University of Wisconsin-Madison, Madison, WI, 53706, USA}
\email{pchen345@wisc.edu}

\author[Li]{Qin Li}
\address[Qin Li]
		{\newline Department of Mathematics, University of Wisconsin-Madison, Madison, WI, 53706, USA}
\email{qinli@math.wisc.edu}

\author[Wang]{Li Wang}
\address[Li Wang]
		{\newline School of Mathematics, University of Minnesota-Twin Cities, Minneapolis, MN, 55455, USA}
\email{liwang@umn.edu}

\author[Yang]{Yunan Yang}
\address[Yunan Yang]
		{\newline Department of Mathematics, Cornell University, Ithaca, NY, 14853, USA}
\email{yunan.yang@cornell.edu}

\thanks{\today}

\begin{abstract} 
We study the problem of reconstructing interaction kernels in systems of interacting agents from macroscopic measurements when posed as an optimization problem. The reconstruction procedure depends on the formulation of the forward model, which may be given either by a finite-dimensional coupled ODE system tracking individual agent trajectories or by a mean-field PDE describing the evolution of the agent density. We investigate the similarities and differences between these two formulations in the mean-field regime. While the first variation derived from the particle system does not provide an unbiased estimator of the first variation associated with the limiting PDE, we prove that, under mild assumptions, the two are close in a weak sense with a convergence rate $\mathcal{O}(N^{-1/2})$. This rate is further confirmed by numerical evidences.
\end{abstract}

\maketitle

\section{Introduction}

Newton’s \emph{hypotheses non fingo} reflects an early instance of inferring interaction laws directly from observed dynamics. In the case of gravitation, this approach led to the inverse-square force law and a system of coupled ordinary differential equations (ODEs) governing the motion of interacting bodies. A similar data-driven philosophy underlies Coulomb's law in electrostatics. From a modern perspective, both can be viewed as instances of \emph{inverse problems}: recovering interaction laws from trajectory data. Such inverse formulations continue to play a central role in the current modeling of complex systems~\cite{Lu_Zhong_Tang_2019_nonparam, He2022NumericalIO, Li-Wang-Yang-2023-MC-Gradient, Yang-Caflisch-2023-DSMC-Boltzmann, Li-Sun-Lai-2024-doping-profile, SAWANT2023115836, Uy_Peherstorfer_2021_inference}.

In many current applications, the systems of interest involve large populations of interacting agents~\cite{Cucker_Smale_2007_flock, Ha_Tadmor_2008_flocking, Ahn_Ha_2010_flocking, Tadmor2021OnTM, Motsch2011-mw}, including molecular systems~\cite{Kolokolnikov_Carrillo_2013_particle_interaction}, biological collectives~\cite{Stevens_1997_bio, Greene_2023}, animal groups~\cite{Lukeman_2010_PNAS_scoter, Katz_2011_PNAS_fish}, robotic swarms, and traffic flows. In such settings, the objective is to infer effective interaction laws governing collective behavior from observational data, typically modeled through interaction kernels~\cite{Bongini_2017_learn_interaction, Hellmuth-Li-2025-chemotaxis, carrillo_inverse_kernel2025, Lu2020LearningIK, ZHONG_2020_data_collect, Lu_2021_kernel_trajectory, Lang_Lu_interact_2022, Miller2020LearningTF, blickhan2025dicediscreteinversecontinuity}.

A fundamental distinction {between multi-agent systems and} classical two-body experiments is that individual interactions cannot generally be isolated. Instead, observations consist of the simultaneous evolution of many agents and are often available only through aggregated or macroscopic measurements that do not resolve individuals. Consequently, the inference problem shifts from low-dimensional systems of coupled ordinary differential equations to high-dimensional or continuum partial-differential equation models.

This shift raises an identifiability question: when agents are indistinguishable and only collective behavior is observed, under what conditions can interaction kernels be recovered from the data? The answer depends on the structure of the dynamics, the observation model, and the available measurements. In this work, we focus on a mathematical formulation of this problem arising in the \emph{mean-field regime}.

The mean-field regime corresponds to a scaling limit in which the number of interacting particles becomes large and the influence of any individual particle is negligible relative to the collective effect~\cite{PE_Jabin_2014_mean_field_review,  Jabin_Wang_2017_stochastic_particle}. In this limit, the system dynamics are described by the evolution of a probability distribution governed by a partial differential equation (PDE), with interactions represented through averaged, nonlocal terms~\cite{Spohn1991LargeSD, Golse_2016_lecture, Spohn_1980_kinetic_micro}. Although this description sacrifices particle-level resolution, it offers substantial analytical and computational advantages and has been extensively studied in the theory of interacting particle systems~\cite{Cardaliaguet_2019_MFG, Chaintron_Diez_2022_review1, Chaintron_Diez_2022_review2, Huang_2020_meanfield}.

From the perspective of inverse problems, however, the mean-field regime presents a subtle challenge~\cite{Zuazua_2019_agent_network, Zhu_Zuazua_2018_mac_control_mic}. Although mean-field models provide a tractable description of large systems, it is not \emph{a priori} clear whether inferring interaction kernels at the particle level is consistent with inferring them from the corresponding mean-field PDE. In particular, when interaction laws are identified via optimization-based formulations that minimize discrepancies between model predictions and observed data, it is natural to ask whether optimizing over a finite particle system yields results consistent with those obtained by optimizing directly at the mean-field level.

Addressing this question is the central focus of this paper. Specifically, we ask:
\vspace{3mm}
\begin{center}
\emph{What are the similarities and disparities between inversion at the particle level and inversion at the mean-field level?}
\end{center}
\vspace{3mm}

Our analysis is built upon two sets of theories.

The first set of theories is the variational formulation of inverse problems, in which unknown coefficient functions are inferred from observational data by minimizing suitable objective functionals and analyzing the associated first-order optimality conditions. This formulation can be posed both at the particle level, where the dynamics is governed by finite-dimensional systems of ordinary differential equations, and at the continuum level, where the evolution is described by a partial differential equation for the underlying probability measure. Evaluating the similarities and differences between the two inverse problems then amounts to tracking the adjoint systems involved in the variational formulations.

The second technical preparation is mean-field analysis that rigorously connects the particle system to its continuum limit as the particle number $N \to \infty$. We leverage existing results on the derivation and analyzing the mean-field limit, and utilize them in our setting. In particular, we rely on the fact that for interactive particles, under mild regularity assumptions on the interaction kernel, the difference between the particle presentation and the mean-field PDE decays at the Monte Carlo rate $\mathcal{O}(N^{-1/2})$ in the weak sense.

These two sets of theories are instrumental for us to obtain the following results:
\begin{itemize}
    \item We start with a linear transport equation where particles do not interact. In particular, the transport equation:
\begin{equation}\label{eq:transport_eq_f}
\partial_t f(t,x) + \nabla_x \cdot \big(a(x) f(t,x)\big) = 0\,,
\qquad f(0,x) = f_0(x)\,,
\end{equation}
admits a Lagrangian formulation in terms of the characteristic flow
\begin{equation}\label{eq:transport_forward_X}
\dot{X}(t) = a(X(t))\,, \qquad X(0) = X_0\,.
\end{equation}
If $X_0$ is distributed according to the density $f_0$, then the law of $X(t)$ has the density $f(t,\cdot)$ for all $t$. The objective is to recover the velocity field $a(x)$. In this linear setting, we show that the gradient obtained from the particle-based formulation is unbiased with respect to the gradient derived from the continuum formulation, and that this equivalence holds for any finite number of particles~\cite{Hoogenboom_1977_adjoint_MC}.
\item We then extend our analysis to particles that interact. In the mean-field regime, this deduces a nonlinear equation:
\begin{equation}\label{eq:df_wr_mean_field}
\partial_t f + \nabla_x \cdot \big((w \ast f)\, f\big) = 0\,,
\qquad f(0,x) = f_0(x)\,,
\end{equation}
with its particle presentation given as:
\begin{equation}\label{eq:dX_wr_interact}
\dot{X}_i = \frac{1}{N} \sum_{j=1,\, j\neq i}^N w(X_i - X_j)\,,
\qquad X_i(0) = X_{i,0}\,.
\end{equation}
The goal is to recover the interaction kernel $w:\mathbb{R}^d \to \mathbb{R}^d$. Unlike in the linear case, the finite $N$ reconstruction is a biased estimate of its associated mean-field reconstruction, but this bias is small when $N\to\infty$.
\end{itemize}

The remainder of the paper is organized as follows. In Section~\ref{sec:linear_transport_equation} we present our first result. We will derive the variational approach for linear transport equations and establish gradient consistency between particle and continuum formulations. Section~\ref{sec:interaction} is dedicated to kernel reconstruction for interacting particles in the mean-field regime. We highlight the emergence of bias and its asymptotic resolution in the mean-field limit. Finally, Section~\ref{sec:numerics} presents numerical experiments that validate the theoretical results, confirm the predicted Monte Carlo scaling with respect to the number of particles, and explore the practical performance of gradient-based methods for interaction kernel reconstruction.

\section{First-Order Optimality for Particle and PDE Inverse Problems}
\label{sec:linear_transport_equation}



Inferring unknown parameters in dynamical system is formulated as optimization, and when the underlying dynamics admit both an Eulerian (PDE) and a Lagrangian (particle) description, their roles in inverse problems may not be automatically equivalent. This section is dedicated to examining this relation for linear transport equations with the velocity field as the to-be-reconstructed unknown parameter.

While the problem is deliberately formulated in a simple manner, this model admits both a PDE formulation and an equivalent particle formulation via characteristics. Hence, it is an ideal setting in which we can isolate and analyze the structural features of optimization-based inversion. The results presented here are of independent interest and will serve as a conceptual and technical foundation for the interacting particle systems studied in subsequent sections.

We consider a density field evolving under an unknown velocity field. We assume access to the initial density and to a macroscopic observation of the solution at a final time $t=T$, tested against a prescribed observable $\nu$.

The governing equation is the linear transport equation
\[
\partial_t f(t,x) + \nabla_x \cdot (a(x) f(t,x)) = 0,
\qquad f(0,x) = f_0(x),
\]
where the velocity field $a(x)$ is an unknown function on $\mathbb{R}^d$. Given the initial condition $f_0(x)$ and the measurement
\begin{equation}\label{eq:Jf_nu_measure}
\mathcal{J}[f] := \int \nu(x)\, f(T,x)\, \d x,
\end{equation}
the inverse problem consists in inferring $a(x)$. When formulated as an optimization problem, this leads to the following PDE-constrained optimization:
\begin{equation}\label{eq:df_ax_PDE_opt}
\begin{aligned}
\min_{a(\cdot)} \quad & \mathcal{J}[f] \\
\text{subject to} \quad &
\begin{cases}
\partial_t f + \nabla_x \cdot (a(x) f) = 0, \\
f(0,x) = f_0(x).
\end{cases}
\end{aligned}
\end{equation}
Since the solution $f(T,\cdot)$ is uniquely determined by the velocity field $a(x)$ and the initial distribution $f_0$, we may transfer the dependence and abuse notation by writing $\mathcal{J}$ as $\mathcal{J}[a; f_0]$. A standard approach to solving this optimization problem is to evolve the predicted $a$ according to the gradient flow
\[
\partial_s a(x) = -\frac{\delta \mathcal{J}}{\delta a}(x; f_0),
\]
where $s$ denotes the artificial learning (or training) time, and $f_0$ is treated as a fixed parameter. 

The same problem can be formulated in a particle (Lagrangian) framework. The transport dynamics are described by the characteristic equation
\begin{equation*}
\dot{X}(t) =\frac{\d}{\d t} X(t)= a(X(t))\,, \qquad X(0) = X_0\,.
\end{equation*}

The measurement now takes the form of
\begin{equation}\label{eq:JX_nu_measure}
\mathsf{J}[X(T)] := \nu(X(T))\,,
\end{equation}
and the corresponding inverse problem becomes an ODE-constrained optimization problem:
\begin{equation}\label{eq:dX_aX_ODE_opt}
\begin{aligned}
\min_{a(\cdot)} \quad & \mathsf{J}[X(T)] \\
\text{subject to} \quad &
\begin{cases}
\dot{X}(t) = a(X(t))\,, \\
X(0) = X_0\,.
\end{cases}
\end{aligned}
\end{equation}
As in the PDE setting, the terminal state $X(T)$ is uniquely determined by $a(x)$ and the initial state $X_0$. We therefore write $\mathsf{J}$ as $\mathsf{J}[a; X_0]$ and consider the associated gradient flow
\[
\partial_s a(x) = -\frac{\delta \mathsf{J}}{\delta a}(x; X_0),
\]
where $s$ again denotes the learning time.

The PDE and particle formulations describe the same underlying dynamics and are therefore expected to be closely related. In particular, if $f_0 = \delta(x-{X_0})$, then the time-$t$ PDE solution takes the form:
\begin{equation}\label{eq:characteristics}
f(t,x) =
\delta(x-X(t))\,.
\end{equation}
This correspondence is reflected in the objective functionals. Indeed, comparing~\eqref{eq:Jf_nu_measure} and~\eqref{eq:JX_nu_measure}:
\[
\mathsf{J}[X(T)] = \int_{\mathbb{R}^d} \nu(x)\, \delta(x-{X(T)})\, \d x
= \mathcal{J}[f]\,,
\]
where we see a direct relation in terms of the objective functions. A natural question is how the gradients $\delta \mathcal{J}/\delta a$ and $\delta \mathsf{J}/\delta a$ are related in the two formulations. 

\begin{theorem}\label{thm:gradient_consistency}
The variational derivatives of the objective functionals in the PDE and particle formulations are consistent in the following sense:
\begin{itemize}
    \item[a.] If $f_0 = \delta(x-{X_0})$, then
    \[
    \frac{\delta \mathcal{J}}{\delta a}(x; f_0)
    = \frac{\delta \mathsf{J}}{\delta a}(x; X_0).
    \]
    \item[b.] For general $f_0$, one has
    \[
    \frac{\delta \mathcal{J}}{\delta a}(x; f_0)
    = \int \frac{\delta \mathsf{J}}{\delta a}(x; y)\, f_0(y)\, \mathrm{d}y.
    \]
\end{itemize}
\end{theorem}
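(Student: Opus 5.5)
I will compute both first variations explicitly by the adjoint (Lagrangian) method and then compare the resulting formulas.

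\textbf{PDE side.} Perturb $a \mapsto a + \epsilon\,\delta a$ and write $\delta f$ for the resulting linearized variation of $f$. It solves
\[
\partial_t \delta f + \nabla_x\cdot(a\,\delta f) = -\nabla_x\cdot(\delta a\, f), \qquad \delta f(0,\cdot)=0 .
\]
Introduce the adjoint $g(t,x)$ solving the backward transport equation
\[
\partial_t g + a\cdot\nabla_x g = 0, \qquad g(T,x)=\nu(x).
\]
Pairing the linearized equation with $g$ and integrating by parts in $t$ and $x$ gives
\[
\delta\mathcal J = \int \nu\,\delta f(T)\,\d x = \int_0^T\!\!\int f\,\delta a\cdot\nabla_x g\,\d x\,\d t .
\]
Hence
\[
\frac{\delta\mathcal J}{\delta a}(x;f_0)=\int_0^T f(t,x)\,\nabla_x g(t,x)\,\d t .
\]
Since $g$ is constant along characteristics, $g(t,x)=\nu(\Phi_{T-t}(x))$, where $\Phi$ is the flow of $a$. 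I will assume throughout that $a$ and $\nu$ are smooth enough (say $C^1$ with bounded derivatives) for the flow and these integrations by parts to be justified.

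\textbf{Particle side.} Perturb the ODE in the same way. The variation $\delta X$ satisfies
\[
\dot{\delta X} = Da(X)\,\delta X + \delta a(X), \qquad \delta X(0)=0 .
\]
Take the adjoint $p(t)$ solving
\[
\dot p = -Da(X)^\top p, \qquad p(T)=\nabla\nu(X(T)).
\]
This gives
\[
\delta\mathsf J=\nabla\nu(X(T))\cdot\delta X(T)=\int_0^T p(t)\cdot\delta a(X(t))\,\d t ,
\]
so
\[
\frac{\delta\mathsf J}{\delta a}(x;X_0)=\int_0^T p(t)\,\delta(x-X(t))\,\d t .
\]

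\textbf{Key identification.} The main step is to show that $p(t)=\nabla_x g(t,X(t))$. To do this, differentiate the adjoint PDE in $x$:
\[
\partial_t\nabla g + (a\cdot\nabla)\nabla g + Da^\top\nabla g = 0 .
\]
Evaluated along $X(t)$, this says that $q(t):=\nabla g(t,X(t))$ satisfies $\dot q=-Da(X)^\top q$, which is the same equation as for $p$. The terminal data also agree, since $q(T)=\nabla\nu(X(T))=p(T)$. By uniqueness of solutions to the linear ODE, $p\equiv q$. This requires $g\in C^2$, which I will either assume or handle by a mollification argument. I expect this identification to be the main technical point.

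\textbf{Part (a).} With $f_0=\delta(x-X_0)$, the characteristics formula \eqref{eq:characteristics} gives $f(t,x)=\delta(x-X(t))$. Substituting into the PDE formula yields
\[
\int_0^T\delta(x-X(t))\,\nabla g(t,X(t))\,\d t=\int_0^T\delta(x-X(t))\,p(t)\,\d t,
\]
which is the particle gradient.

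\textbf{Part (b).} Both the map $f_0\mapsto f(t)$ and the adjoint $g$ are independent of $f_0$: $g$ depends only on $a$ and $\nu$, and the forward map is linear in $f_0$. Therefore $\frac{\delta\mathcal J}{\delta a}(x;f_0)=\int_0^T f(t,x)\nabla g\,\d t$ is linear in $f_0$. Writing $f_0=\int f_0(y)\,\delta(\cdot-y)\,\d y$, so that $f(t)=\int f_0(y)\,\delta(\cdot-\Phi_t(y))\,\d y$, and applying (a) to each $y$ gives the averaged formula. Rigorously, I will compute both sides tested against a smooth $\delta a$ and apply Fubini.
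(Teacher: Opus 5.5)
Your proposal is correct and follows essentially the same route as the paper. Both of you compute the two first variations through adjoints and prove (a) by differentiating the adjoint transport equation in $x$, so that $\nabla_x g(t,X(t))$ and the particle adjoint satisfy the same linear ODE with the same terminal data $\nabla_x\nu(X(T))$. Part (b) then follows from linearity of $f_0\mapsto f$ together with the fact that $g$ does not depend on $f_0$. Your explicit $Da^\top$ in the adjoint ODE states the transpose convention more carefully than the paper's $\nabla_x a(X)\,Y$.
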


To prove part (a) of this theorem, one only needs to recognize that $\mathcal{J}[a;\delta_{X_0}]=\mathsf{J}[a;X_0]$ for every $a$, their first variations against $a$ thus must be equal. Part (b) can be deduced by recognizing $\mathcal{J}[a;f_0]$ as a linear functional over the initial data $f_0$, and thus $\frac{\delta\mathcal{J}}{\delta a}$ is a linear operator.

Another route to prove this theorem is to explicitly express the two variational derivatives using their adjoints. This route is more versatile and can be used in nonlinear settings in later discussions. To facilitate these discussions, we present the derivation in the following two lemmas.

\begin{lemma}\label{prop:PDE_gradient}
For a fixed initial density $f_0$, the first variation of $\mathcal{J}$ with respect to the velocity field $a(\cdot)$ is given by
\begin{equation}\label{eq:dJ_da_transport_fg}
\frac{\delta \mathcal{J}}{\delta a}(x)
= \int_0^T \nabla_x g(t,x)\, f(t,x)\, \mathrm{d}t\,,
\end{equation}
where $g$ satisfies the adjoint transport equation:
\begin{equation}\label{eq:transport_adj_eq_g}
\partial_t g(t,x) + a(x)\cdot \nabla_x g(t,x) = 0\,,
\qquad
g(T,x) = \nu(x)\,.
\end{equation}
\end{lemma}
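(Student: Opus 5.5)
We use the standard adjoint (Lagrangian) method and work formally, assuming $a$, $f_0$, $\nu$ are smooth enough and decay fast enough that all integrations by parts hold with vanishing boundary terms. We perturb $a \mapsto a + \epsilon\,\tilde a$ and let $f^\epsilon$ be the corresponding solution of \eqref{eq:transport_eq_f}. Setting $\tilde f := \partial_\epsilon f^\epsilon|_{\epsilon=0}$ and differentiating the equation gives the linearized problem
\[
\partial_t \tilde f + \nabla_x\cdot(a\,\tilde f) = -\nabla_x\cdot(\tilde a\, f)\,,\qquad \tilde f(0,\cdot)=0,
\]
where the initial condition vanishes because $f_0$ is fixed. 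By \eqref{eq:Jf_nu_measure}, the directional derivative is
\[
\frac{\d}{\d\epsilon}\mathcal{J}\Big|_{\epsilon=0}=\int \nu(x)\,\tilde f(T,x)\,\d x .
\]
Our goal is to rewrite this as $\int \tilde a(x)\cdot G(x)\,\d x$ and read off $G = \delta\mathcal{J}/\delta a$.

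We eliminate $\tilde f$ with the adjoint $g$ from \eqref{eq:transport_adj_eq_g}. The key step is to compute
\[
\frac{\d}{\d t}\int g\,\tilde f\,\d x=\int (\partial_t g)\tilde f\,\d x+\int g\,\partial_t\tilde f\,\d x ,
\]
substituting $\partial_t\tilde f$ from the linearized equation. Integrating by parts, $-\int g\,\nabla_x\cdot(a\tilde f)\,\d x = \int (a\cdot\nabla_x g)\,\tilde f\,\d x$. This term cancels against $\int(\partial_t g)\tilde f\,\d x$ by \eqref{eq:transport_adj_eq_g}. The source term becomes $-\int g\,\nabla_x\cdot(\tilde a f)\,\d x=\int \nabla_x g\cdot \tilde a\, f\,\d x$. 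Integrating over $t\in[0,T]$ and using $g(T)=\nu$ and $\tilde f(0)=0$ yields
\[
\int \nu\,\tilde f(T,x)\,\d x=\int \tilde a(x)\cdot\Big(\int_0^T \nabla_x g(t,x)\,f(t,x)\,\d t\Big)\d x ,
\]
and the integrand in parentheses is exactly \eqref{eq:dJ_da_transport_fg}.

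The algebra is routine. The main obstacle is justifying the manipulations: differentiability of $\epsilon\mapsto f^\epsilon(T)$ in a weak sense, and the vanishing of boundary terms. If $f_0$ has compact support and $a$ is Lipschitz, then $f$ and $\tilde f$ stay compactly supported on $[0,T]$, and $g(t,x)=\nu(\Phi_{T-t}(x))$ along the flow $\Phi$ of $a$ is smooth when $\nu$ and $a$ are. Hence all integrals are over compact sets and no boundary terms arise. If $f_0$ is only a measure (such as $\delta_{X_0}$), we would interpret every step in the weak, duality sense; the identity is linear in $f_0$ and continuous in it, so it extends by density. Alternatively, the differentiability of $f^\epsilon$ can be bypassed through the representation $\mathcal{J}=\int \nu(\Phi^{a}_T(y))f_0(y)\,\d y$. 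Differentiating the flow map in $a$ then gives the same formula, once the Duhamel formula for $\partial_\epsilon\Phi$ is recognized as $\nabla_x g$ transported along characteristics.
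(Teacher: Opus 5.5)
Your proposal is correct and is essentially the paper's argument. The paper forms the Lagrangian $\mathcal{L}$ with multipliers $g, g_0$ and integrates by parts to get $\int_0^T\!\int(\partial_t g + a\cdot\nabla_x g)f\,\d x\,\d t$; setting the variations in $f$ and $f(T)$ to zero yields \eqref{eq:transport_adj_eq_g}, and linearity of $\mathcal{L}$ in $a$ gives $\delta\mathcal{J}/\delta a = \delta\mathcal{L}/\delta a$. Your tangent-linear computation of $\frac{\d}{\d t}\int g\tilde f\,\d x$ is the same integration-by-parts identity written out explicitly, and it has the mild advantage of showing directly, rather than through the Lagrangian principle, that the directional derivative equals $\int \tilde a\cdot\int_0^T \nabla_x g\, f\,\d t\,\d x$.
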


\begin{lemma}\label{prop:particle_gradient}
For a fixed initial data $X_0$, the first variation of $\mathsf{J}$ with respect to $a(\cdot)$ admits the representation
\begin{equation}\label{eq:dJ_da_transport_Y}
\frac{\delta \mathsf{J}}{\delta a}(x) = \int_0^T Y(t)\delta\!\left(x-X(t)\right)\,\d t\,, 
\end{equation}
where $Y(t)$ solves:
\begin{equation}\label{eq:transport_adjoint_Y}
\dot{Y}(t) + \nabla_x a(X(t))\, Y(t) = 0\,,
\qquad
Y(T) = \nabla_x \nu(X(T))\,.
\end{equation}
\end{lemma}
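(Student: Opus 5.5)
We prove Lemma~\ref{prop:particle_gradient} by a Lagrangian (adjoint-state) computation for the ODE-constrained problem~\eqref{eq:dX_aX_ODE_opt}. We perturb the velocity field as $a\mapsto a+\eps \tilde a$ with $\tilde a$ smooth and compactly supported, and write $X^\eps(t)$ for the perturbed trajectory with the same initial data $X_0$. Assuming $a$ is $C^1$ with bounded derivative (so the flow is well defined and differentiable in parameters), the derivative $Z(t):=\partial_\eps X^\eps(t)|_{\eps=0}$ exists and solves the linearized (tangent) equation
\[
\dot Z(t) = \nabla_x a(X(t))\, Z(t) + \tilde a(X(t))\,,\qquad Z(0)=0\,.
\]
Here $\nabla_x a$ is the Jacobian matrix. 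Differentiability of $\eps\mapsto X^\eps(t)$ comes from standard ODE dependence-on-parameters results, applied to the right-hand side $a+\eps\tilde a$, which is $C^1$ in $(x,\eps)$. The chain rule then gives the directional derivative of the objective:
\[
\frac{\d}{\d\eps}\mathsf{J}[a+\eps\tilde a;X_0]\Big|_{\eps=0} = \nabla_x\nu(X(T))\cdot Z(T)\,.
\]

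The key step eliminates $Z$ using the adjoint. Let $Y$ solve~\eqref{eq:transport_adjoint_Y}; this is a linear backward ODE with continuous coefficients along the known trajectory, so it is uniquely solvable on $[0,T]$. We read $\nabla_x a(X(t))\,Y$ as the transposed Jacobian acting on $Y$, which is the convention that makes the pairing below close. We compute
\[
\frac{\d}{\d t}\big(Y\cdot Z\big) = \dot Y\cdot Z + Y\cdot \dot Z = -(\nabla_x a^\top Y)\cdot Z + Y\cdot(\nabla_x a\, Z) + Y\cdot \tilde a(X(t)) = Y(t)\cdot\tilde a(X(t))\,.
\]
Integrating over $[0,T]$ and using $Z(0)=0$ and $Y(T)=\nabla_x\nu(X(T))$ yields
\[
\nabla_x\nu(X(T))\cdot Z(T) = \int_0^T Y(t)\cdot \tilde a(X(t))\,\d t = \int_{\R^d}\tilde a(x)\cdot\Big(\int_0^T Y(t)\,\delta(x-X(t))\,\d t\Big)\d x\,.
\]
Since $\tilde a$ is arbitrary, the $L^2$ (distributional) Riesz representative of the first variation is exactly~\eqref{eq:dJ_da_transport_Y}.

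The main subtlety is interpretive rather than computational. The first variation is a vector-valued measure supported on the trajectory $\{X(t)\}$, not a function, so the identity must be understood as an equality of distributions tested against smooth $\tilde a$. A second point is keeping the transpose convention consistent, so that the boundary term cancels correctly. Justifying differentiability of the flow in $\eps$ requires only $C^1$ regularity of $a$ and $\nu$, which we assume throughout.
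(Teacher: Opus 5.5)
Your proof is correct and is essentially the paper's adjoint argument. The paper introduces a Lagrangian with multipliers $Y(t), Y_0$, integrates $\int_0^T \dot X\cdot Y\,\d t$ by parts, sets the $X$-variations to zero to get~\eqref{eq:transport_adjoint_Y}, and reads off $\frac{\delta\mathsf{L}}{\delta a}=\int_0^T Y(t)\,\delta(x-X(t))\,\d t$; your identity $\frac{\d}{\d t}(Y\cdot Z)=Y\cdot\tilde a(X(t))$ is exactly that integration by parts applied to the tangent variable $Z$. Your version additionally justifies the step the formal Lagrangian calculation takes for granted, namely that $\frac{\delta\mathsf{J}}{\delta a}=\frac{\delta\mathsf{L}}{\delta a}$ once the adjoint equation holds, and it fixes the transpose convention in $\nabla_x a\,Y$, which is the one needed for the identification $Y(t)=\nabla_x g(t,X(t))$ in Theorem~\ref{thm:gradient_consistency}.
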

The proof for these two lemmas is the standard calculus-of-variation argument. We leave the proof in Appendix~\ref{app:A}.
These lemmas provide essential calculations to connect the two variations in Theorem~\ref{thm:gradient_consistency}. It suggests that to connect $\frac{\delta \mathcal{J}}{\delta a}$ and $\frac{\delta \mathsf{J}}{\delta a}$, one needs to carefully examine the relation between the two adjoint variables $g$ and $Y$.

\begin{proof}[Proof for Theorem~\ref{thm:gradient_consistency}]
We prove the two items separately.
\begin{itemize}
    \item[--]Proof for a. To show the equivalence between $\frac{\delta \mathcal{J}}{\delta a}$ and $\frac{\delta \mathsf{J}}{\delta a}$, we are to compare the two formula~\eqref{eq:dJ_da_transport_fg} and~\eqref{eq:dJ_da_transport_Y}. Noting in this case, we already have~\eqref{eq:characteristics}, reducing $\frac{\delta \mathcal{J}}{\delta a}$ to be $\int \nabla_xg(t,X(t))\delta(x-X(t))\,\d t$. To equate the two variational derivatives amounts to showing $\nabla_x g(t,X(t))= Y(t)$, or equivalently, showing $h_j(t,X(t)) = Y_j(t)$ for all $j$ where we denote $h_j=\partial_j g$.
    
    To do so, we take $j$-th derivative of~\eqref{eq:transport_adj_eq_g}:
\begin{equation}
\partial_t h_j + a(x)\cdot \nabla_x h_j + \nabla_x a(x) \cdot h = 0\,.
\end{equation}
Along the characteristics $X(t)$, according to~\eqref{eq:transport_forward_X}, the equation becomes
\begin{equation}
\frac{\d h_j}{\d t}(t,X(t)) 
+ \nabla a(X(t))\cdot h(t,X(t)) = 0\,,
\end{equation}
that coincides with~\eqref{eq:transport_adjoint_Y}. Furthermore, $h(T,X(T)) =\nabla_xg(T,X(T))=\nabla_x\nu(X(T))= Y(T)$, therefore $h(t,X(t))=Y(t)$ for all $t$, concluding the proof.

\item[--]Proof for b. This is achieved by noticing the linear dependence on $f$ of $\frac{\delta \mathcal{J}}{\delta a}$ (as seen in~\eqref{eq:dJ_da_transport_fg}) and the linear dependence on $f_0$ of $f$ (as seen in~\eqref{eq:transport_eq_f}). The linear relation thus pass on, making:
\[
\frac{\delta \mathcal{J}}{\delta a}(x;f_0)=\int  \frac{\delta \mathcal{J}}{\delta a}(x;\delta(x-y))f_0(y)\,\d{y}\,.
\]
Substituting $\frac{\delta \mathcal{J}}{\delta a}$ by $\frac{\delta \mathsf{J}}{\delta a}$ according to case a., the statement is shown.
\end{itemize}
\end{proof}

Although the analysis in this section is carried out for a simple linear transport equation, it reveals several features that are essential for understanding inverse problems in more general particle systems. In particular, the relationship between the adjoint variables $g$ and $Y$ is subtle and, at first glance, counterintuitive. While the particle trajectory $X(t)$ provides a Lagrangian representation of the density $f(t,\cdot)$, the particle-level adjoint variable $Y(t)$ is \emph{not} a Lagrangian representation of the PDE adjoint $g(t,\cdot)$. Instead, the two are related through the identity
\begin{equation}\label{eq:adjoint_dg_Y}
\nabla_x g(t,X(t)) = Y(t)\,,
\end{equation}
which holds along characteristics.

This structural distinction between forward variables and adjoint variables plays a crucial role in connecting particle-level and continuum-level optimization problems. As we show in later sections, the same phenomenon persists in interacting particle systems, where it underlies the discrepancy between particle-based and mean-field gradients and ultimately leads to biased estimators at finite particle numbers.

\section{Inverse Problems for Interacting Particles in the Mean-Field Regime}\label{sec:interaction}

This section is dedicated to the recovery of interaction kernels in systems of interacting particles operating in the mean-field regime. We consider dynamics generated by a large number $N \gg 1$ of indistinguishable agents, where each binary interaction is weak but collectively produce nontrivial macroscopic behavior. In this scaling, the influence of any single particle on the system is negligible, and the cumulative effect of interactions can be represented through an averaged, or mean-field, description.

From a modeling perspective, the mean-field regime provides a natural bridge between microscopic particle dynamics and macroscopic continuum equations governing the evolution of probability distributions. From the standpoint of inverse problems, however, this regime introduces fundamental challenges. While the forward mean-field limit offers a tractable and well-understood description of the system dynamics, it is not a priori clear whether interaction laws inferred from macroscopic observations are consistent with those governing the underlying finite-particle system.

Before formulating and analyzing the inverse problem of kernel recovery, we therefore review, in subsection~\ref{sec:back_mean_field}, the forward theory of interacting particle systems in the mean-field regime. We then utilize these results to study the associated two inverse problems in subsection~\ref{sec:nonlinear_particle_pde}.

\subsection{Background on Forward Problems for Interacting Particle Systems}\label{sec:back_mean_field}
The analysis of mean-field limits has a long history and has attracted substantial interest over the past decades; see, for example, the survey~\cite{PE_Jabin_2014_mean_field_review}. Much recent progress has focused on treating singular interaction kernels, including Coulomb-type potentials~\cite{Serfaty_2020_mean_field_coulomb, Nguyen_Rosenzweig_Serfaty_2022_mean_field}. In settings with sufficiently regular interactions, classical approaches date back to~\cite{Dobrushin_1979_vlasov, Sznitman_1991_prop_chaos, Meleard_1996_McKean_Vlasov}. We briefly summarize the framework and results most relevant for our subsequent analysis.

Let $w:\mathbb{R}^d \to \mathbb{R}^d$ denote an interaction kernel, where $w(r)$ represents the force exerted by a particle located at displacement $r$ relative to another particle. We assume that interactions are pairwise and translation invariant, meaning the kernel depends only on the relative position $r = X_i - X_j$. The dynamics of the $N$-particle system are given by the coupled ODEs
\begin{equation}\label{eq:dX_couple_w}
\dot{X}_i = \frac{1}{N} \sum_{j=1,\, j\neq i}^N w(X_i - X_j)\,,
\qquad
X_i(0) = X_{i,0}\,.
\end{equation}

It is convenient to introduce the empirical measure
\begin{equation}\label{eq:empirical}
f_N(t,x) = \frac{1}{N} \sum_{i=1}^N \delta(x - X_i(t))\,,
\end{equation}
so that $f_N(0,\cdot) = \frac{1}{N} \sum_{i=1}^N \delta(\cdot - X_{i,0})$. With this notation, the particle system can be written compactly as
\[
\dot{X}_i = (w * f_N)(X_i)\,.
\]
This suggests that in the mean-field limit, the velocity field is the convolution of the kernel and the probability density $f(t,\cdot)$ itself, forming the nonlinear transport equation:
\begin{equation}\label{eq:dtf_wf_mean_field}
\partial_t f + \nabla_x \cdot \big( (w * f) f \big) = 0\,,
\qquad
f(0,x) = f_0(x)\,.
\end{equation}


Since \eqref{eq:dX_couple_w} provides a Lagrangian description of the mean-field PDE~\eqref{eq:dtf_wf_mean_field}, one expects a notion of equivalence between $f_N$ and $f$. The precise meaning of this equivalence, however, depends crucially on the choice of initial data.

\begin{itemize}
\item[\textbf{Setting A}.]
If the initial density coincides exactly with the empirical measure, $f_0 = f_N(0,\cdot)$, then this equivalence propagates in time:
\[
f(t,\cdot) \equiv f_N(t,\cdot)\,.
\]
In this case, $f_N$ is an exact measure-valued solution of the mean-field PDE.

\item[\textbf{Setting B}.]
If the initial particle positions are sampled independently according to $f_0$, i.e.\ $X_{i,0} \overset{\mathrm{i.i.d.}}{\sim} f_0$, then the empirical measure is unbiased initially but does not remain so at later times~\cite[Theorem 2]{Lacker_2023_mean_field_conv_rate}: 
\[
f_0 = \mathbb{E}[f_N(0,\cdot)]
\quad \not\!\!\implies \quad
f(t,\cdot) = \mathbb{E}[f_N(t,\cdot)]\,.
\]
Nevertheless, under suitable regularity assumptions on $w$, unbiasedness is recovered in the limit $N \to \infty$,
\[
f(t,\cdot) = \lim_{N\to\infty} \mathbb{E}[f_N(t,\cdot)]\,.
\]
\end{itemize}

We impose the following assumptions on the interaction kernel.

\begin{assumption}\label{assump:property_w(r)}
The interaction kernel $w:\mathbb{R}^d \to \mathbb{R}^d$ satisfies:
\begin{enumerate}
\item $w(-r) = -w(r)$ and $w(0) = 0$;
\item $w(\cdot) \in C_b^2$, i.e., there exists some positive constant $\Cbdd>0$ such that 
    \begin{equation}\label{eq:assump_Cb2}
    \|w\|_{L^{\infty}} + \|\nabla_r w\|_{L^{\infty}} + \|\nabla_r^2 w\|_{L^{\infty}} \leq \Cbdd\,.
    \end{equation}
\end{enumerate}
\end{assumption}
As a consequence, $w$ and $\nabla_r w$ are globally Lipschitz, with the Lipschitz constant denoted by $\CLip > 0$.

Under these assumptions, the classical mean-field convergence results hold.

\begin{theorem}\label{thm:mean_field_fN_f}
Let $\{X_i\}$ solve the particle system {with $f_N$ being its empirical measure,} and let $f$ solve the mean-field PDE. If Assumption~\ref{assump:property_w(r)} holds and $X_{i,0} \overset{\mathrm{i.i.d.}}{\sim} f_0$, then:
\begin{itemize}
\item[(i)] For any test function $\varphi \in C_b^1(\mathbb{R}^d)$,
\begin{equation}\label{eq:phi_weak_fN_f_err}
\mathbb{E}\left|\langle \varphi, (f_N - f)(t,\cdot) \rangle\right|^2
\le \frac{C(t,\CLip,\|\varphi\|_{C^1},f_0)}{N};
\end{equation}
\item[(ii)] The empirical measure converges in the Wasserstein distance,
\begin{equation}\label{eq:wass_dist_f_N_f}
\mathbb{E}\big(\mathcal{W}_p(f_N,f)\big)
\le C(d,p,f)\, N^{-\lambda},
\end{equation}
where $\lambda$ depends on $p$, the dimension $d$, and the moment bounds of $f$.
\end{itemize}
\end{theorem}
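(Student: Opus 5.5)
The plan is Sznitman's coupling argument. Introduce auxiliary nonlinear (McKean) particles $\bar X_i$ that start from the same initial data, $\bar X_i(0)=X_{i,0}\overset{\mathrm{i.i.d.}}{\sim} f_0$, and move in the exact mean field:
\[
\dot{\bar X}_i = (w*f)(t,\bar X_i)\,.
\]
Since $f$ solves~\eqref{eq:dtf_wf_mean_field} and $\bar X_i$ follows the characteristics of the linear transport equation with velocity $w*f$, each $\bar X_i(t)$ has law $f(t,\cdot)$, by the same correspondence used in Section~\ref{sec:linear_transport_equation}. Moreover the $\bar X_i(t)$ are i.i.d., because each is a deterministic function of its own initial datum. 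Write $\bar f_N=\frac1N\sum_i\delta(x-\bar X_i(t))$.

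\textbf{Part (i).} Split the error as
\[
\langle\varphi,f_N-f\rangle=\langle\varphi,f_N-\bar f_N\rangle+\langle\varphi,\bar f_N-f\rangle\,.
\]
The second term is a centered average of i.i.d.\ bounded variables, so its second moment is at most $\|\varphi\|_{L^\infty}^2/N$. The first term is bounded by $\|\nabla\varphi\|_{L^\infty}\frac1N\sum_i|X_i-\bar X_i|$. Thus it suffices to prove
\[
\mathbb E|X_i-\bar X_i|^2\le C(t,\CLip)/N\,,
\]
and by exchangeability this is the same for every $i$.

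\textbf{The coupling estimate.} This is the step I expect to be the main obstacle. Differentiate $|X_i-\bar X_i|$ in time and decompose the velocity difference into three pieces:
\begin{itemize}
\item[(1)] $\frac1N\sum_{j\neq i}\big[w(X_i-X_j)-w(\bar X_i-\bar X_j)\big]$, which is bounded by $\CLip\big(|X_i-\bar X_i|+\frac1N\sum_j|X_j-\bar X_j|\big)$;
\item[(2)] the self-term correction $-\frac1N w(0)$, which vanishes by Assumption~\ref{assump:property_w(r)};
\item[(3)] the fluctuation $\frac1N\sum_{j}\big[w(\bar X_i-\bar X_j)-(w*f)(\bar X_i)\big]$.
\end{itemize}
For the fluctuation term (3), condition on $\bar X_i$. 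The summands with $j\neq i$ are then independent and mean zero, since $\bar X_j\sim f(t)$, and they are bounded by $2\Cbdd$. The term $j=i$ contributes only $O(1/N)$. Hence its second moment is $O(\Cbdd^2/N)$.

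With these bounds, set $e(t)=\mathbb E|X_1-\bar X_1|^2$ and use exchangeability to replace $\frac1N\sum_j$ terms by $e$. This gives
\[
\dot e\le C\CLip\, e + C/N\,,\qquad e(0)=0\,,
\]
and Gronwall's inequality yields $e(t)\le C(t,\CLip,\Cbdd)/N$. Combining this with the splitting above gives~\eqref{eq:phi_weak_fN_f_err}.

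\textbf{Part (ii).} Use the triangle inequality
\[
\mathcal W_p(f_N,f)\le \mathcal W_p(f_N,\bar f_N)+\mathcal W_p(\bar f_N,f)\,.
\]
The first term is bounded by the coupling that pairs $X_i$ with $\bar X_i$:
\[
\mathcal W_p(f_N,\bar f_N)\le\Big(\frac1N\sum_i|X_i-\bar X_i|^p\Big)^{1/p}\,.
\]
The estimate above extends to $p$-th moments, since the increments are bounded, giving rate $N^{-1/2}$. The second term is the Wasserstein distance between the empirical measure of $N$ i.i.d.\ samples and their law $f(t)$. For this I would invoke the known Fournier--Guillin type bounds: $\mathbb E\,\mathcal W_p(\bar f_N,f)\le C N^{-\lambda}$, where $\lambda$ depends on $d$, $p$ and the moments of $f(t)$. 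These moments propagate from $f_0$ because the velocity $w*f$ is bounded. Since this rate is the slower of the two, it determines $\lambda$.
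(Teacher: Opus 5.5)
Your argument is correct, and it is the route the paper points to. The paper gives no proof of this theorem; it only cites Sznitman's coupling argument and the Fournier--Guillin bound for i.i.d.\ empirical measures, which is exactly your McKean particles $\bar X_i$ driven by $w*f$, the Gronwall estimate $\mathbb{E}|X_1-\bar X_1|^2\le C/N$, and the triangle inequality for $\mathcal{W}_p$. One small point: extending the coupling bound to $p$-th moments at rate $N^{-1/2}$ for $p>2$ needs a Hoeffding/Rosenthal-type bound on the $p$-th moment of your fluctuation term (3), but since the Fournier--Guillin term is never faster for $p\ge 1$, this does not affect the resulting $\lambda$.
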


Such results can be established either through coupling arguments~\cite{Sznitman_1991_prop_chaos, Fournier_Guillin_2015_empirical} or through stability estimates for the mean-field PDE~\cite{Dobrushin_1979_vlasov}. These convergence properties will be repeatedly invoked in the inverse problem analysis that follows.

\subsection{Variational Formulation for Particle and PDE Inverse Problems}\label{sec:nonlinear_particle_pde}

Building on the optimization framework introduced in Section~\ref{sec:linear_transport_equation} and the mean-field theory summarized in Section~\ref{sec:back_mean_field}, we now turn to the central inverse problem of this paper: the reconstruction of the interaction kernel $w$ from macroscopic observations. When the system is described at the continuum level, this leads to the following PDE-constrained optimization problem:
\begin{equation}\label{eq:Jf_wr_continuous_opt}
\begin{aligned}
\min_{w(\cdot)} \quad & \mathcal{J}[f] = \int_{\mathbb{R}^d} \nu(x)\, f(T,x)\,\mathrm{d}x\,, \\
\text{subject to} \quad &
\begin{cases}
\partial_t f + \nabla_x \cdot \big((w * f)\, f\big) = 0\,, \\
f(0,x) = f_0(x)\,.
\end{cases}
\end{aligned}
\end{equation}

At the particle level, the corresponding inverse problem is formulated as an ODE-constrained optimization problem:
\begin{equation}\label{eq:JX_wr_particle_opt}
\begin{aligned}
\min_{w(\cdot)} \quad &
\mathsf{J}[\{X_i\}] = \frac{1}{N} \sum_{i=1}^N \nu(X_i(T))\,, \\
\text{subject to} \quad &
\begin{cases}
\dot{X}_i = \frac{1}{N} \sum_{\substack{j=1 \\ j\neq i}}^N w(X_i - X_j)\,, \\
X_i(0) = X_{i,0}\,, \quad i = 1,\dots,N\,.
\end{cases}
\end{aligned}
\end{equation}

When the underlying density $f$ coincides with the empirical measure $f_N$, the two objective functionals agree, in the sense that $\mathcal{J}[f_N] = \mathsf{J}[\{X_i\}]$. However, as in the forward mean-field analysis, the equivalence of the associated first variations is more subtle. In particular, the relationship between the gradients $\delta \mathcal{J}/\delta w$ and $\delta \mathsf{J}/\delta w$ depends critically on the choice of initial data and on the behavior of the system in the mean-field limit.

In the following subsections, we examine this relationship in detail and quantify the discrepancy between the particle-level and continuum-level formulations of the inverse problem.

\subsubsection{Finite particles}\label{sec:interaction_finite}

This subsection is dedicated to \textbf{Setting A} where one has finitely many particles. The initial configuration is $f_0 =f_N(0,\cdot)= \frac{1}{N}\sum_{i=1}^N \delta(x-X_{i,0})$, and the empirical measure $f_N(t,x)$ is a measure-valued solution of~\eqref{eq:df_wr_mean_field}. Then the two notions of the first variations are completely equivalent:
\begin{theorem}\label{thm:fN_particle_gradient_equal}
Let $\mathcal{J}$ and  $\mathsf{J}$  be defined in~\eqref{eq:Jf_wr_continuous_opt} with the initial condition $f_0$, and ~\eqref{eq:JX_wr_particle_opt} with the initial condition $\{X_{i,0}\}_{i=1}^N$, respectively. If $f_0 = \frac{1}{N}\sum_{i=1}^N \delta(x-X_{i,0})$, then:
\begin{equation}
\begin{aligned}
\frac{\delta \mathcal{J}}{\delta w}(r)
= \frac{\delta \mathsf{J}}{\delta w}(r) \,. 
\end{aligned}
\end{equation}
\end{theorem}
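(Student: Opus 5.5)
The plan is to follow the two-route strategy of Theorem~\ref{thm:gradient_consistency}. The short route rests on an identity of objective functionals. Under Assumption~\ref{assump:property_w(r)} we have $w(0)=0$, so for the empirical measure
\[
(w*f_N)(X_i)=\frac1N\sum_{j=1}^N w(X_i-X_j)=\frac1N\sum_{j\neq i} w(X_i-X_j).
\]
The diagonal term therefore drops out, and the particle ODE~\eqref{eq:dX_couple_w} is exactly the characteristic system of~\eqref{eq:dtf_wf_mean_field} started from $f_0=f_N(0,\cdot)$.

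We will check that $f_N(t,\cdot)$ is a measure-valued (weak) solution. To do this, test against $\varphi\in C_c^1$ and differentiate $\frac1N\sum_i\varphi(X_i(t))$ in time. Uniqueness of such solutions for $w\in C_b^2$ (Dobrushin stability, as behind Theorem~\ref{thm:mean_field_fN_f}) then identifies $f$ with $f_N$. Consequently $\mathcal{J}[w;f_N(0)]=\mathsf{J}[w;\{X_{i,0}\}]$ for every admissible $w$. Differentiating along any admissible perturbation $w+\epsilon\,\delta w$, with $\delta w$ odd and $C_b^2$, gives equality of the first variations.

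Since the adjoint representation is needed later for Setting B, we will also verify the statement explicitly.

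\textbf{Continuum side.} Introduce a Lagrange multiplier $g$. A calculus-of-variations argument analogous to Lemma~\ref{prop:PDE_gradient} gives the nonlinear adjoint equation
\[
\partial_t g+(w*f)\cdot\nabla_x g+\int \nabla_x g(t,y)\cdot w(y-x)\,f(t,y)\,\d y=0,\qquad g(T,\cdot)=\nu,
\]
with the sign of the nonlocal term fixed by the convention for $w$. It also gives the gradient
\[
\frac{\delta\mathcal{J}}{\delta w}(r)=\int_0^T\!\!\int \nabla_x g(t,x)\,f(t,x)\,f(t,x-r)\,\d x\,\d t .
\]

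\textbf{Particle side.} As in Lemma~\ref{prop:particle_gradient}, introduce adjoints $Y_i$ solving
\[
\dot Y_i+\frac1N\sum_{j\neq i}\nabla w(X_i-X_j)^{\top}(Y_i-Y_j)=0,\qquad Y_i(T)=\frac1N\nabla\nu(X_i(T)).
\]
This yields
\[
\frac{\delta\mathsf{J}}{\delta w}(r)=\frac1N\sum_i\sum_{j\neq i}\int_0^T Y_i(t)\,\delta\bigl(r-(X_i-X_j)(t)\bigr)\,\d t .
\]

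\textbf{Matching the two.} The key step mirrors~\eqref{eq:adjoint_dg_Y}: we claim $Y_i(t)=\frac1N\nabla_x g(t,X_i(t))$. To prove it, differentiate the adjoint PDE in $x$, set $f=f_N$, and evaluate along $X_i(t)$. The transport term produces $\frac{\d}{\d t}\nabla g(t,X_i)$ plus $\nabla(w*f_N)^{\top}\nabla g$. The nonlocal term becomes $-\frac1N\sum_j\nabla w(X_j-X_i)^{\top}\nabla g(X_j)$, which after using $\nabla w(-r)=\nabla w(r)$ (oddness of $w$) reproduces the $Y_j$ coupling. The terminal data match, so uniqueness of the linear ODE system gives the claim. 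Substituting $f=f_N$ into the continuum gradient, the double integral becomes the double sum over $(i,j)$. The off-diagonal terms coincide with the particle formula.

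The main obstacle I expect is the diagonal $i=j$ contribution, which is $\frac1N\sum_i\int_0^T\nabla g(t,X_i)\,\d t\;\delta(r)$, a mass concentrated at $r=0$. This term has no particle counterpart. I would handle it by interpreting the first variation on the admissible class of Assumption~\ref{assump:property_w(r)}: perturbations $\delta w$ are odd with $\delta w(0)=0$, so any distribution supported at $r=0$ pairs to zero against them. The two gradients are then equal as functionals on that tangent space, and this is also exactly what the short route delivers. A lesser technical point is justifying the adjoint calculus for the measure-valued $f_N$. This is handled by working with the characteristics directly, since $g$ is smooth when $\nu$ and $w$ are.
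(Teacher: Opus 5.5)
Your proposal is correct, and its adjoint half is the paper's proof. The paper also shows $Y_i(t)=h_N(t,X_i(t))$ with $h_N=\nabla_x g$, by observing that the particle trajectories are characteristics of the $x$-differentiated adjoint equation with $f=f_N$. It then substitutes $f_N$ into the formula of Lemma~\ref{lem:continuous_dj/dw}. Your $Y_i$ is the paper's divided by $N$.

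You go beyond the paper in two respects.

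\textbf{The short route.} Your identity of the objective functionals, via uniqueness of measure-valued solutions, appears in the paper only as a remark for Theorem~\ref{thm:gradient_consistency}(a). Here it is the more transparent argument, because it shows exactly where equality can fail. For a kernel $w'$ with $w'(0)\neq 0$, $w'\ast f_N$ contains a self-interaction $\frac1N w'(0)$ that the ODE omits, so $f_N$ no longer solves the PDE. Oddness is not actually needed for the identity, only $w'(0)=0$. Perturbing within all such kernels gives equality against every $\delta w$ with $\delta w(0)=0$. Odd directions alone identify only the odd parts of the two gradients.

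\textbf{The diagonal term.} The paper writes the continuum gradient as a sum over all pairs $(i,j)$. It then invokes the particle formula, which sums over $j\neq i$, without comment. Your treatment is necessary, not cosmetic. Summing the adjoint system over $i$, the two coupling sums cancel after exchanging $i$ and $j$, so $\sum_i Y_i(t)$ is conserved. In the paper's normalization the diagonal contribution is therefore $\frac{T}{N^2}\sum_{i=1}^N\nabla_x\nu(X_i(T))\,\delta(r)$. This is generally nonzero: for $\nu(x)=x$ in one dimension it equals $\frac{T}{N}\delta(r)$. So the theorem holds precisely in your sense, on $\R^d\setminus\{0\}$ or on the tangent space of Assumption~\ref{assump:property_w(r)}. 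It does not hold as a literal identity of distributions on $\R^d$.

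Two small slips. First, the diagonal term carries $\frac{1}{N^2}$, not $\frac1N$, in front of $\sum_i\int_0^T\nabla g(t,X_i)\,\d t$. Second, it is not true that \emph{any} distribution supported at the origin pairs to zero with odd perturbations vanishing there; first derivatives of $\delta$ need not. What you actually use is that this particular term is a constant vector times $\delta(r)$.
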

The proof of this theorem relies on the following lemmas.

\begin{lemma}\label{lem:continuous_dj/dw}
The first variation of $\mathcal{J}$ with respect to $w(r)$ is:
\begin{equation}\label{eq:nonlinear_dJdw_fg}
\frac{\delta \mathcal{J}}{\delta w}(r) = \int_0^T \int f(t,x-r) f(t,x) \nabla_x g(t, x)\,\d x \d t\,,
\end{equation}
where $g$ solves the adjoint system of~\eqref{eq:df_wr_mean_field}:
\begin{equation}\label{eq:nonlinear_adj_g}
\begin{aligned}
\partial_t g - w\ast (\nabla_x g f) + (w\ast f)\cdot \nabla_x g = 0\,, \quad
g(T,x) = \nu(x)\,.
\end{aligned}
\end{equation}
\end{lemma}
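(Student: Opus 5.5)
The plan is to use the same Lagrangian route as in Lemma~\ref{prop:PDE_gradient}. I will perturb $w \mapsto w + \epsilon \tilde w$, write the induced perturbation $f = f + \epsilon \tilde f + o(\epsilon)$, and linearize \eqref{eq:df_wr_mean_field}. This gives the linearized equation
\[
\partial_t \tilde f + \nabla_x\cdot\big((w*f)\tilde f + (w*\tilde f) f + (\tilde w * f) f\big) = 0\,,\qquad \tilde f(0,\cdot)=0\,.
\]
The first variation of the objective is then $\delta\mathcal{J} = \int \nu(x)\tilde f(T,x)\,\d x$. To eliminate $\tilde f$, I will multiply the linearized equation by a test function $g(t,x)$, integrate over $[0,T]\times\mathbb{R}^d$, and integrate by parts in both $t$ and $x$. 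Imposing $g(T,\cdot)=\nu$ and using $\tilde f(0)=0$ gives
\[
\int \nu\,\tilde f(T)\,\d x = \int_0^T\!\!\int \tilde f\,\partial_t g + \big((w*f)\tilde f + (w*\tilde f)f + (\tilde w*f)f\big)\cdot\nabla_x g \,\d x\,\d t\,.
\]

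The key step is to move the convolution off $\tilde f$ in the term $\int (w*\tilde f)\cdot \nabla_x g\, f\,\d x$. By Fubini,
\[
\int\!\!\int w(x-y)\tilde f(y)\cdot \nabla_x g(x) f(x)\,\d y\,\d x = \int \tilde f(y)\int w(x-y)\cdot \nabla_x g(x) f(x)\,\d x\,\d y\,.
\]
Oddness of $w$ from Assumption~\ref{assump:property_w(r)} gives $w(x-y) = -w(y-x)$, so the inner integral equals $-\big(w*(\nabla_x g\, f)\big)(y)$, with the dot product understood componentwise. Collecting all the terms multiplying $\tilde f$ gives exactly $\partial_t g + (w*f)\cdot\nabla_x g - w*(\nabla_x g f)$. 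Choosing $g$ to solve \eqref{eq:nonlinear_adj_g} kills these terms, so only the $\tilde w$ term remains:
\[
\delta \mathcal{J} = \int_0^T\!\!\int (\tilde w * f)(x) \cdot \nabla_x g(t,x)\, f(t,x)\,\d x\,\d t\,.
\]

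Writing $(\tilde w * f)(x) = \int \tilde w(r) f(t,x-r)\,\d r$ and exchanging the order of integration yields $\delta\mathcal{J} = \int \tilde w(r)\cdot \big[\int_0^T\!\int f(t,x-r)f(t,x)\nabla_x g(t,x)\,\d x\,\d t\big]\d r$. Reading off the Riesz representative gives \eqref{eq:nonlinear_dJdw_fg}.

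The main obstacle is justification rather than algebra. The backward adjoint \eqref{eq:nonlinear_adj_g} is linear in $g$ with coefficients built from $f$ and $w\in C_b^2$, so I would first argue that it is well posed, with $\nabla_x g$ bounded. I would also need the Fubini and integration-by-parts steps to hold; in particular, for measure-valued $f$ (as in Setting A) the pairing must make sense. This follows if $g(t,\cdot)\in C^1_b$, which propagates from $\nu\in C_b^1$ by a characteristics and Gr\"onwall argument. I would state these regularity facts briefly and otherwise treat the computation as formal, in line with Appendix~\ref{app:A}.
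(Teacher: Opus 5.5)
Your argument is correct and is essentially the paper's proof in tangent-linear form: instead of varying the Lagrangian $\mathcal{L}$, you linearize the forward equation and pair it with $g$. The ingredients are the same as the paper's: integration by parts in $t$ and $x$, transposing the convolution via the oddness of $w$ (which is what gives the minus sign in front of $w\ast(\nabla_x g f)$), and the substitution $r=x-y$ to read off the representative. One small correction to your justification remark: for empirical $f$ (Setting A), $\tilde f$ is a derivative of a measure, so pairing it with $\partial_t g$ and $(w\ast f)\cdot\nabla_x g$ needs second derivatives of $g$ (available when $\nu, w\in C_b^2$), not just $g(t,\cdot)\in C_b^1$.
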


\begin{lemma}\label{lem:particle_dj/dw}
The first variation of $\mathsf{J}$ with respect to $w(r)$ is
\begin{equation} \label{eq:nonlinear_gradient_NdYi}
    \dfrac{\delta \mathsf{J}}{\delta w}(r) 
    = \frac{1}{N^2}\sum_{i=1}^N \sum_{j\not=i} \int_0^T Y_i(t)\delta(r-(X_i-X_j)(t))\,\d t \,,
\end{equation}
where $Y_i$ solves the following adjoint system of~\eqref{eq:dX_wr_interact}:
    \begin{equation} \label{eq:nonlinear_adjoint_NdYi}
    \dot{Y}_i 
    + \frac{1}{N}\sum_{j\not=i}^N \nabla w(X_i-X_j) Y_i
    - \frac{1}{N}\sum_{j\not=i}^N \nabla w(X_j-X_i) Y_j = 0\,,  \quad 
    Y_i(T) = \nabla_x \nu(X_i(T))\,.
    \end{equation}
    
\end{lemma}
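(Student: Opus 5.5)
The plan is a Lagrangian computation: perturb $w$ and track, to first order, how the trajectories move and how that motion propagates to the objective. Fix $\{X_{i,0}\}$, perturb $w\mapsto w+\epsilon\,\tilde w$ with $\tilde w\in C_b^1$, and write $X_i^\epsilon$ for the perturbed trajectories. Assumption~\ref{assump:property_w(r)} makes the right-hand side of~\eqref{eq:dX_wr_interact} globally Lipschitz, so standard ODE differentiability gives that $Z_i:=\partial_\epsilon X_i^\epsilon|_{\epsilon=0}$ exists and solves the linearized system
\[
\dot Z_i=\frac1N\sum_{j\neq i}\nabla w(X_i-X_j)\,(Z_i-Z_j)+\frac1N\sum_{j\neq i}\tilde w(X_i-X_j),\qquad Z_i(0)=0 .
\]
Since $\mathsf{J}=\frac1N\sum_i\nu(X_i(T))$, its directional derivative is
\[
\delta\mathsf{J}[\tilde w]=\frac1N\sum_i\nabla\nu(X_i(T))\cdot Z_i(T).
\]

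Next, eliminate the $Z_i$ with the adjoint. I would take $Y_i$ solving~\eqref{eq:nonlinear_adjoint_NdYi} and compute $\frac{\d}{\d t}\sum_i Y_i\cdot Z_i$. The term $\frac1N\sum_{i}\sum_{j\ne i} Y_i\cdot\nabla w(X_i-X_j)Z_i$ cancels directly against the $\dot Y_i$ term involving $\nabla w(X_i-X_j)Y_i$. The cross term $-\frac1N\sum_{i}\sum_{j\neq i}Y_i\cdot\nabla w(X_i-X_j)Z_j$ needs the indices swapped, $i\leftrightarrow j$, and then cancels the remaining adjoint term with $\nabla w(X_j-X_i)Y_j$.

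I expect this bookkeeping to be the main obstacle. One has to fix the convention that $\nabla w$ acting on $Y$ means the transpose, $(\nabla w)^\top Y$, which is the sense in which~\eqref{eq:nonlinear_adjoint_NdYi} must be read. With that convention the second sum in~\eqref{eq:nonlinear_adjoint_NdYi} becomes $\sum_{j\ne i}\nabla w(X_j-X_i)^\top Y_j$, and the swap makes it match exactly. Once the cancellations go through, only the forcing survives:
\[
\frac{\d}{\d t}\sum_i Y_i\cdot Z_i=\frac1N\sum_i\sum_{j\ne i}Y_i\cdot\tilde w(X_i-X_j).
\]

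Finally, integrate this identity from $0$ to $T$. Using $Z_i(0)=0$ and $Y_i(T)=\nabla\nu(X_i(T))$, and multiplying by $\frac1N$, gives
\[
\delta\mathsf{J}[\tilde w]=\frac1{N^2}\sum_i\sum_{j\ne i}\int_0^T Y_i(t)\cdot\tilde w\bigl((X_i-X_j)(t)\bigr)\,\d t .
\]
Writing $\tilde w((X_i-X_j)(t))=\int \tilde w(r)\,\delta\bigl(r-(X_i-X_j)(t)\bigr)\,\d r$ and exchanging the integral with the finite sum and the time integral identifies the Riesz representative in $r$ as~\eqref{eq:nonlinear_gradient_NdYi}. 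This representative is a measure in $r$, which is fine because the variation is understood in this distributional sense, exactly as in Lemma~\ref{prop:particle_gradient}.
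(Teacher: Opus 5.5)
Your proof is correct. It reaches the paper's formula by a somewhat different route.

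\textbf{The paper's route.} It writes a Lagrangian $\mathsf{L}$ with multipliers $Y_i$ and integrates $\int_0^T \dot X_i\cdot Y_i\,\d t$ by parts. It then \emph{defines} the adjoint system by setting $\delta\mathsf{L}/\delta X_i$ and $\delta\mathsf{L}/\delta X_i|_{t=T}$ to zero. Finally it reads off $\delta\mathsf{J}/\delta w$ as the explicit partial variation $\delta\mathsf{L}/\delta w$.

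\textbf{Your route.} You compute the actual G\^ateaux derivative through the tangent-linear system for $Z_i=\partial_\eps X_i^\eps$. You then eliminate the $Z_i$ with the exact duality identity for $\frac{\d}{\d t}\sum_i Y_i\cdot Z_i$.

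\textbf{Common ground.} The two computations are dual to each other and hinge on the same index swap $i\leftrightarrow j$. In the paper, that swap appears when the term $\frac{1}{N^2}\sum_i\sum_{j\ne i}w(X_i-X_j)\cdot Y_i$ is varied with respect to $X_i$ through its second argument, which produces $-\nabla w(X_j-X_i)Y_j$.

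\textbf{What each buys.} The Lagrangian route is mechanical: the adjoint equation and its terminal condition fall out without being guessed. It is, however, formal. It tacitly uses that $\mathsf{L}=\mathsf{J}$ on the constraint set, and that the implicit dependence of $X_i$ on $w$ contributes nothing once $Y_i$ solves the adjoint system. Your identity $\frac{\d}{\d t}\sum_i Y_i\cdot Z_i=\frac1N\sum_i\sum_{j\ne i}Y_i\cdot\tilde w(X_i-X_j)$ is exactly the verification of that step, with differentiability of the flow in $\eps$ supplied by $w\in C_b^2$. So your argument is the rigorous version of the paper's computation.

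Your explicit convention that $\nabla w$ acts on $Y$ as $(\nabla w)^\top Y$ is also the one implicit in the paper. It is what the Lagrangian variation produces, and it is how $(\nabla w\ast f)\cdot h$ is used later in the equation for $h=\nabla_x g$.
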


\begin{proof}[Proof of Lemma~\ref{lem:continuous_dj/dw}]
Denote $g(t,x), g_0(x)$ as the adjoint states, and define the following Lagrangian:
\begin{equation}
\begin{aligned}
\mathcal{L}[f,f_{in},w,g,g_0] 
= & \int (\nu(x) - g(T,x))f(T,x)\,\d x + \int f(0,x) (g(0,x)-g_0) + f_0(x)g_0 \,\d x \\
& + \int_0^T \int (\partial_t g) f
+ (w\ast f)f \cdot \nabla_x g\,\d x\d t \,.
\end{aligned}
\end{equation}
In order to obtain the adjoint equation, we first linearize the Lagrangian with respect to $f$,
\begin{equation*}
\begin{aligned}
\mathcal{L}[f + \eps\tilde f] - \mathcal{L}[f] = & \eps\int_0^T \int (\partial_t g) \tilde f\,\d x \d t 
- \eps \int_0^T \int (w\ast (\nabla_x g f)) \tilde f\,\d x\d t \\
& + \eps \int_0^T \int (w\ast f)\tilde f \cdot \nabla_x g\,\d x\d t + O(\eps^2)\,,
\end{aligned}
\end{equation*}
then let $\eps\to0$:
\begin{equation}
\begin{aligned}
\frac{\delta \mathcal{L}}{\delta f}(t,x) = & \partial_t g - w\ast (\nabla_x g f) + (w\ast f)\cdot \nabla_x g\,,
\end{aligned}
\end{equation}
where $w\ast (\nabla_x g f)(y) = \sum_{i=1}^d w_i(x-y) \partial_{x_i} g(t,x) f(t,x)$.
The variation with respect to $f(T)$ is straightforward:
\begin{equation}
\begin{aligned}
\frac{\delta \mathcal{L}}{\delta f|_{t=T}}(x) &= \nu(x) - g(T,x)\,.
\end{aligned}
\end{equation}
Moreover, the variation with respect to $w(r)$ is computed by
\begin{equation}
\begin{aligned}
\mathcal{L}[w+\eps \tilde{w}] - \mathcal{L}[w] = & \eps \int_0^T \int_{\R^d} \int_{\R^d} \tilde{w}(x-y) f(t,y) f(t,x) \cdot \nabla_x g(t, x)\,\d y\d x \d t\,, \notag \\
= & \eps \int_0^T \int_{\R^d} \int_{\R^d} \tilde{w}(r) f(t,x-r) f(t,x) \cdot \nabla_x g(t, x)\,\d r\d x \d t\qquad r:=x-y, \notag
\end{aligned}
\end{equation}
where we recall that $\left|\frac{\partial y}{\partial r}\right|=1$, and letting $\eps\to0$ gives the desired quantity~\eqref{eq:nonlinear_dJdw_fg}. 
\end{proof}

\begin{proof}[Proof of Lemma~\ref{lem:particle_dj/dw}]
Similarly, denote $Y_i, Y_i^0$ as the Lagrange multipliers, and $\mathsf{L}$ as the Lagrangian function,
\begin{equation}
\begin{aligned}
\mathsf{L}[X_i,X_i^{in},w,Y_i,Y_i^0] = & 
\frac{1}{N} \sum_{i=1}^N \nu(X_i(T)) 
- \frac{1}{N}\sum_{i=1}^N X_i(T)Y_i(T)  \\
& + \frac{1}{N}\sum_{i=1}^N  \int_0^T \dot{Y}_i X_i \,\d t 
+ \frac{1}{N^2}\sum_{i=1}^N \sum_{j\not=i}^N \int_0^T w(X_i-X_j) Y_i\,\d t \\
& + \frac{1}{N}\sum_{i=1}^N  X_i(0)Y_i(0) - \frac{1}{N}\sum_{i=1}^N \left(X_i(0) - X_{i,0}\right)Y_i^0\,.
\end{aligned}
\end{equation}
and we take the variation of each $X_i, X_i(T)$,
\begin{equation}
\begin{aligned}
\frac{\delta \mathsf{L}}{\delta X_i}(t)
= & \frac{1}{N} \dot{Y}_i 
+ \frac{1}{N^2}\sum_{j\not=i}^N \nabla w(X_i-X_j) Y_i
- \frac{1}{N^2}\sum_{j\not=i}^N \nabla w(X_j-X_i) Y_j\,, \\ 
\frac{\delta \mathsf{L}}{\delta X_i|_T}
= & \frac{1}{N} \nabla_x\nu(X_i(T)) - \frac{1}{N}Y_i(T)\,.
\end{aligned}
\end{equation}
The variation with respect to $w$ is
\begin{equation}
\begin{aligned}
\mathsf{L}[w+\eps \tilde{w}] - \mathsf{L}[w] = & \eps \frac{1}{N^2}\sum_{i=1}^N \sum_{j\not=i}^N \int_0^T \tilde{w}(X_i-X_j) Y_i(t)\,\d t \,,\\
= & \eps \frac{1}{N^2}\sum_{i=1}^N \sum_{j\not=i}^N \int_0^T \int \tilde{w}(r) Y_i(t)\delta(r-(X_i(t)-X_j(t)))\,\d r\d t \,,
\end{aligned}
\end{equation}
then take $\eps\to0$ we obtain $\frac{\delta\mathsf{L}}{\delta w}$ as in~\eqref{eq:nonlinear_gradient_NdYi}.
\end{proof}

The relation between adjoint states~\eqref{eq:adjoint_dg_Y} also appears in the interacting setting, which we will justify in proving Theorem~\ref{thm:fN_particle_gradient_equal}. To this end, denote $h(t,x):=\nabla_x g(t,x) \in \R^d$ for given $g(t,x)$ solving~\eqref{eq:nonlinear_adj_g}, then it satisfies
\begin{equation}\label{eq:nonlinear_adj_dxg_h}
\partial_t h + (w\ast f)\cdot \nabla_x h - \nabla w\ast (h f) + (\nabla w\ast f) \cdot h  = 0\,,\quad 
h(T,x) = \nabla_x \nu (x)\,.
\end{equation}
Moreover, for any empirical measure $f_N(t,x):=\frac{1}{N}\sum_{i=1}^N \delta(x-X_i(t))$, we denote $h_N(t,x)$ as the classical solution to the following:
\begin{equation}\label{eq:nonlinear_fN_induce_hN}
\partial_t h_N + (w\ast f_N) \cdot \nabla_x h_N - \nabla w\ast (h_N f_N) + (\nabla w\ast f_N) \cdot h_N = 0\,,\quad 
h_N(T,x) = \nabla_x \nu(x)\,.
\end{equation}
Note that in~\eqref{eq:nonlinear_fN_induce_hN}, $f_N$ always appears in the form of convolution, hence the coefficients $(w\ast f_N)=\frac{1}{N} \sum_{i=1}^N w(x-X_i(t))$, $(\nabla w\ast f_N)=\frac{1}{N} \sum_{i=1}^N \nabla w(x-X_i(t))$ are globally bounded by Assumption~\ref{assump:property_w(r)}. The well-posedness of~\eqref{eq:nonlinear_fN_induce_hN} can be shown by the semigroup property (see later sections).


\begin{proof}[Proof of Theorem~\ref{thm:fN_particle_gradient_equal}]
We can first show that with $f_N=\frac{1}{N}\sum_{i=1}^N\delta(x-X_i(t))$, it holds that
\begin{equation}\label{eq:Yi=1/N_dgdx}
Y_i(t) = h_N(t,x)|_{x=X_i(t)}\,.
\end{equation}
First note that $X_i(t)$ that solves~\eqref{eq:dX_couple_w} is a characteristics of~\eqref{eq:nonlinear_fN_induce_hN}. Then along this characteristics, 
we have:
\begin{equation}\label{eq:nolinear_dhdt_Xi}
\begin{aligned}
\frac{\d}{\d t} h_N(t,X_i(t)) 
& + \frac{1}{N}\sum_{j=1}^N  \nabla w(X_i(t)-X_j(t)) \cdot h_N(t,X_i(t)) \\
& - \frac{1}{N}\sum_{j=1}^N  \nabla w(X_i(t)-X_j(t))  \cdot h_N(t,X_j(t))
= 0\,,
\end{aligned}
\end{equation}
which is exactly the equation~\eqref{eq:nonlinear_adjoint_NdYi} that $Y_i(t)$ solves. Moreover, as their final time conditions coincide, $Y_i(T)=h_N(T,X_i(T))$ (recall~\eqref{eq:nonlinear_adjoint_NdYi} and~\eqref{eq:nonlinear_fN_induce_hN}), we conclude that $Y_i(t)=h_N(t,X_i(t))$.

Then by direct computation and the formulation of the two first variations given in Lemmas~\ref{lem:continuous_dj/dw} and~\ref{lem:particle_dj/dw}, we have
\begin{equation*}
\begin{aligned}
\frac{\delta \mathcal{J}}{\delta w}(r) & =\int_0^T \int f_N(t,x-r) f_N(t,x) h_N(t,x)\,\d x\d t \,,\\
& = \frac{1}{N^2} \sum_{i,j=1}^N \int_0^T \int  \delta(x-r-X_j(t)) \delta(x-X_i(t)) h_N(t,x)\,\d x\d t \,,\\
& = \frac{1}{N^2} \sum_{i,j=1}^N \int_0^T \delta(X_i(t)-X_j(t)-r) h_N(t,X_i(t))\,\d t \,. 
\end{aligned}
\end{equation*}
The conclusion is shown by applying~\eqref{eq:Yi=1/N_dgdx}.
\end{proof}

\subsubsection{Infinite particles: mean-field}\label{sec:mean-field}
This subsection is dedicated to \textbf{Setting B} where one draws finitely many particles from an underlying distribution: $X_{i,0}\overset{\text{i.i.d.}}{\sim} f_0$. According to Theorem~\ref{thm:mean_field_fN_f}, in the forward simulation, the empirical distribution $f_N$ is not an unbiased estimator of $f$, but converges to it in the weak sense, with $\frac{1}{\sqrt{N}}$ being the convergence rate. We are to translate this result into the inverse problem setting, with the goal being to show the two first variations $\frac{\delta \mathcal{J}}{\delta w}$ and $\frac{\delta \mathsf{J}}{\delta w}$ are also within $\frac{1}{\sqrt{N}}$ distance.

\begin{theorem} \label{thm:gradient_err_estimate} 
Let $\mathcal{J}$ be defined in~\eqref{eq:Jf_wr_continuous_opt} with the initial condition $f_0$. Let $\mathsf{J}$ be defined in~\eqref{eq:JX_wr_particle_opt} with the initial condition $\{X_{i,0}\}_{i=1}^N$ where $X_{i,0}\overset{\text{i.i.d.}}{\sim} f_0$. If $w$ satisfies Assumption~\ref{assump:property_w(r)}, the following weak convergence holds: for any test function $\varphi(\cdot): \R^d\to \R^d, \varphi(\cdot) \in C^1_b(\R^d)$, there exists some positive constant $C=C(d,T,\varphi,f_0, \nu, \Cbdd, \CLip)$ such that,
\begin{equation}
\E
\, \left|\left\l \frac{\delta \mathcal{J}}{\delta w}- \frac{\delta \mathsf{J}}{\delta w}\,, \varphi \right\r_r \right|^2 \leq \frac{C}{{N}}\,.
\end{equation}
Here $\left\l \psi, \phi \right\r_r:= \sum_{j=1}^d \int_{\R^d} \psi_j(r)\phi_j(r)\,\d r$, and the expectation is taken over all realizations of the initial data for $\{X_{i,0}\}_{i=1}^N$.
\end{theorem}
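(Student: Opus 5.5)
The plan is to use Theorem~\ref{thm:fN_particle_gradient_equal} to rewrite the particle gradient in mean-field form, and then treat the difference between the two gradients as a stability question for a forward–adjoint pair driven by $f_N$ versus $f$. By that theorem (whose proof only used that $f_N$ solves~\eqref{eq:df_wr_mean_field} in the measure sense, which holds for any realization of the initial data), tested against $\varphi$ we have
\[
\Big\langle \tfrac{\delta \mathsf{J}}{\delta w},\varphi\Big\rangle_r=\int_0^T\!\!\iint \varphi(x-y)\cdot h_N(t,x)\,f_N(t,y)f_N(t,x)\,\d y\,\d x\,\d t ,
\]
where $h_N$ solves~\eqref{eq:nonlinear_fN_induce_hN}. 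A harmless point here is that the diagonal terms $i=j$ contribute $\frac1{N}\int_0^T\frac1N\sum_i\varphi(0)\cdot h_N(t,X_i)\,\d t=O(1/N)$, once $h_N$ is shown to be bounded. The analogous identity with $(f,h)$ holds for $\delta\mathcal J/\delta w$. Introducing $\Phi_t[\mu,k](x):=\int\varphi(x-y)\,\d\mu(y)\cdot k(x)$, I will split the difference as
\[
\iint \varphi(x-y)\cdot\big(h_Nf_N\otimes f_N-hf\otimes f\big)
= \underbrace{\langle (\varphi\ast f_N)\cdot h_N,\,f_N-f\rangle}_{I_1}+\underbrace{\langle (\varphi\ast (f_N-f))\cdot h_N, f\rangle}_{I_2}+\underbrace{\langle (\varphi\ast f)\cdot(h_N-h),f\rangle}_{I_3}.
\]

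\textbf{Step 1: uniform bounds on $h_N$.} I would solve~\eqref{eq:nonlinear_fN_induce_hN} backward along the characteristics of $w\ast f_N$. The coefficients $w\ast f_N$ and $\nabla w\ast f_N$ are bounded in $C^1$ by $\Cbdd$ uniformly in $N$ and in the realization, and the nonlocal term $\nabla w\ast(h_Nf_N)$ is bounded by $\Cbdd\|h_N\|_\infty$. Gronwall then gives $\|h_N\|_{L^\infty}\le C(T,\Cbdd)\|\nabla\nu\|_\infty$ deterministically. Differentiating once more and using $w\in C^2_b$ gives a deterministic bound on $\|\nabla_x h_N\|_\infty$ as well, provided $\nu\in C^2_b$. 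The same bounds hold for $h$.

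\textbf{Step 2: the terms $I_1,I_2$.} For $I_2$, Fubini lets me write it as $\langle \psi_N, f_N-f\rangle$ with $\psi_N(y)=\int\varphi(x-y)\cdot h_N(x)f(x)\,\d x$, which is $C^1_b$ with deterministic bounds. The difficulty is that $h_N$, hence the test function, is random and correlated with $f_N$, so Theorem~\ref{thm:mean_field_fN_f}(i) does not apply directly. I would decouple this by a coupling argument (Sznitman): let $\bar X_i$ be i.i.d.\ McKean copies driven by $w\ast f$ with the same initial data. Then $\E|X_i-\bar X_i|^2\le C/N$, and
\[
\langle\psi,f_N-f\rangle=\tfrac1N\sum_i\big(\psi(X_i)-\psi(\bar X_i)\big)+\tfrac1N\sum_i\big(\psi(\bar X_i)-\E\psi(\bar X_i)\big).
\]
The first part is bounded by $\|\nabla\psi\|_\infty$ times the coupling error, uniformly over any $\psi$ in a $C^1$ ball. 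The second part, if $\psi$ were deterministic, is $O(N^{-1/2})$ by independence. To handle the randomness, I replace $\psi_N$ by the deterministic $\psi$ built from $h$, at a cost of $\|h_N-h\|_{C^1}$. This reduces everything to Step 3. $I_1$ is handled the same way.

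\textbf{Step 3 (main obstacle): $h_N-h$.} I need $\E\|h_N-h\|^2\lesssim 1/N$ in a norm that is strong enough. Subtracting the two adjoint equations gives a linear equation for $e=h_N-h$ with zero terminal data. Its sources are $(w\ast(f_N-f))\cdot\nabla h$, $(\nabla w\ast(f_N-f))h$ and $\nabla w\ast(h(f_N-f))$, plus the term $\nabla w\ast(e f_N)$. The last term is bounded by $\|e\|_\infty$, so Gronwall along characteristics gives $\|e\|_\infty\lesssim\int_0^T\sup_x|\text{sources}|$. Each source is of the form $\sup_x|\langle \phi_x,f_N-f\rangle|$ with $\phi_x$ deterministic, where $\phi_x$ is a translate of $w$, of $\nabla w$, or of $\nabla w\, h$. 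A supremum over $x$ is not directly supplied by Theorem~\ref{thm:mean_field_fN_f}(i), which is why I would avoid needing an $L^\infty$ bound in $x$. Instead I would test $e$ only against $f$ (for $I_3$) and $\nabla\psi$-type quantities, and use a duality argument: write $\langle(\varphi\ast f)\cdot e,f\rangle$ via the forward linearized equation and move the $f_N-f$ factors onto deterministic test functions. This is where the $C^2_b$ assumption and the uniform $C^1$ bounds from Step 1 enter. If the duality becomes unwieldy, the fallback is to estimate the $x$-supremum through the coupling, which gives $|\langle \phi_x,f_N-f\rangle|\le \mathrm{Lip}(\phi)\frac1N\sum|X_i-\bar X_i|+|\tfrac1N\sum(\phi_x(\bar X_i)-\E\phi_x(\bar X_i))|$, with the first part uniform in $x$. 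Combining Steps 1–3 and summing the squared errors yields $\E|\cdot|^2\le C/N$.
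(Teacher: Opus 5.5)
You have the right decomposition $I_1+I_2+I_3$ and the right diagnosis. The test functions in $I_1,I_2$ are random and correlated with $f_N$, and the paper's own argument glosses over exactly this: in Proposition~\ref{prop:D2} it applies Theorem~\ref{thm:mean_field_fN_f}(i) to the random function $h(t,x)\varphi(x-X_i(t))$, and in Proposition~\ref{prop:D3} it bounds $\frac1N\sum_i\E|S(s,X_i(s))|^2$ by $\sup_{t,x}\E|S(t,x)|^2$. But you funnel everything into Step~3 and leave it open, and neither route you sketch closes it.

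\textbf{The duality route.} It cannot produce deterministic test functions. If the operator acting on $e=h_N-h$ carries $f_N$ (transport by $w\ast f_N$, nonlocal term $\nabla w\ast(ef_N)$), the dual solution is random. If you put $f$ there instead, the source contains $\nabla w\ast\big(h_N(f_N-f)\big)$ with the random $h_N$ inside the pairing. Swapping $h_N$ for $h$ then leaves $\frac1N\sum_j\nabla w(x-X_j)\,e(t,X_j)$, the error of $e$ at the particle positions, which you never estimate.

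\textbf{The fallback.} In your coupling bound only the first term is uniform in $x$. A bound $\E\sup_{x\in\R^d}\big|\frac1N\sum_i(\phi_x(\bar X_i)-\E\phi_x(\bar X_i))\big|^2\le C/N$ is an empirical-process estimate over a non-compact index family. Nothing in the proposal, or in Theorem~\ref{thm:mean_field_fN_f}, supplies it.

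\textbf{The missing idea.} You never need $e$ in $L^\infty_x$, and your own Step~3 equation already has the right form.
\begin{itemize}
\item Since $|\nabla w\ast(ef_N)|\le\Cbdd E_N$ with $E_N(t):=\frac1N\sum_i|e(t,X_i(t))|$, the quantity $E_N$ closes on itself along the particle characteristics, as in the paper's Lemma~\ref{lem:deltah_sol}.
\item The sources are $|\langle\phi_{X_i(s)},f_N-f\rangle|$ with $\phi_x(y)\in\{w(x-y),\nabla w(x-y),\nabla w(x-y)h(s,y)\}$. Each is deterministic and Lipschitz in $(x,y)$; pull $\|h\|_\infty$ and $\|\nabla h\|_\infty$ out pointwise.
\item Your coupling handles these with no supremum. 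Replacing $X$ by $\bar X$ costs $C\big(|X_i-\bar X_i|+\frac1N\sum_j|X_j-\bar X_j|\big)$. Conditionally on $\bar X_i$, the remainder $\frac1N\sum_j\phi_{\bar X_i}(\bar X_j)-\int\phi_{\bar X_i}f$ is an average of $N-1$ i.i.d.\ centred terms plus an $O(1/N)$ diagonal term. Hence $\E E_N(t)^2\le C/N$.
\item Then $E_f(t):=\int|e|f\,\d x$ follows from a second Gronwall along the deterministic flow of $w\ast f$. Linearize with transport $w\ast f$ and keep the nonlocal term as $\nabla w\ast(ef_N)\le\Cbdd E_N$. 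The remaining sources have $h_N$ only as bounded pointwise multipliers, so their pointwise mean square is $O(1/N)$ by Theorem~\ref{thm:mean_field_fN_f}(i).
\item Only sup-norm differences of test functions enter. So $|I_3|$ and the costs of swapping $h_N$ for $h$ in $I_2$ and $I_1$ are bounded by a multiple of $\|\varphi\|_\infty\int_0^T(E_f+E_N)\,\d t$.
\end{itemize}
Equivalently, couple $Y_i$ directly with $h(t,\bar X_i(t))$ via~\eqref{eq:nonlinear_adjoint_NdYi}.

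\textbf{One further gap in $I_1$.} It also carries the random factor $\varphi\ast f_N$, so you still have to treat the quadratic term $\langle(\varphi\ast(f_N-f))\cdot h,f_N-f\rangle$. After coupling this is a degenerate V-statistic of size $O(1/N)$; it is not handled ``the same way''.
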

\begin{proof}
For any fixed test function $\varphi$, by Lemmas~\ref{lem:continuous_dj/dw} and~\ref{lem:particle_dj/dw} and Theorem~\ref{thm:fN_particle_gradient_equal}, we rewrite $\left\l \frac{\delta \mathcal{J}}{\delta w}\,, \varphi \right\r_r - \left\l \frac{\delta \mathsf{J}}{\delta w}\,, \varphi \right\r_r $ as: 
\begin{equation}\label{eq:weak_sum_D123}
\begin{aligned}
\left\l \frac{\delta \mathcal{J}}{\delta w}\,, \varphi \right\r_r - \left\l \frac{\delta \mathsf{J}}{\delta w}\,, \varphi \right\r_r 
= &
\int \int_0^T \int (f-f_N)(t,x-r) f(t,x) h(t,x) \varphi(r)\,\d x\d t \d r \\
& + \int \int_0^T \int f_N(t,x-r) (f-f_N)(t,x) h(t,x) \varphi(r)\,\d x\d t \d r \\
& + \int \int_0^T \int f_N(t,x-r) f_N(t,x) (h-h_N)(t,x) \varphi(r)\,\d x\d t \d r \\
=: & D_1 + D_2 + D_3 \,.
\end{aligned}
\end{equation}
It can be shown in Propositions~\ref{prop:D1},~\ref{prop:D2}, and~\ref{prop:D3} that all three terms satisfy
\begin{equation}
\E |D_k|^2 \leq \frac{C_k}{N}\,,\quad  k = 1,2,3\,,
\end{equation}
with $C_k$ independent of $N$. By the triangle inequality, the proof is concluded.
\end{proof}
With the classical mean-field limit results Theorem~\ref{thm:mean_field_fN_f}, Theorem~\ref{thm:gradient_err_estimate} is not too surprising. The classical $O(N^{-1/2})$ weak convergence is preserved. Considering, according to Theorem~\ref{thm:fN_particle_gradient_equal}, $\frac{\delta \mathcal{J}}{\delta w}=\frac{\delta \mathsf{J}}{\delta w}$ for an empirical measure $f_0$ of $N$ particles, Theorem~\ref{thm:gradient_err_estimate} essentially states that the map between the initial data $f_0$ to the first variation $\frac{\delta \mathcal{J}}{\delta w}$ is Lipschitz when tested on the function space of $C_b^1$. 
Such Lipschitzness is also observed for the adjoint variable, in the sense that $h_N$ approximates $h$ for large $N$. To show this, recall~\eqref{eq:nonlinear_adj_dxg_h} and~\eqref{eq:nonlinear_fN_induce_hN}, and we define the operator that maps the forward variable to its adjoint:
\begin{equation}
\begin{aligned}
\mathcal{H}_{w}[\cdot]: L^{\infty}([0,T],\mathcal{P}(\R^d)) &\to L^{\infty}([0,T],C^1_b(\R^d))\,, \\
f(t,x) &\mapsto h(t,x)\,.
\end{aligned}
\end{equation}
This allows us to rewrite $h(t,x)=\mathcal{H}_w[f](t,x)$, and $h_N(t,x) = \mathcal{H}_w[f_N]$. Since $f_N\to f$ in the Wasserstein sense (Theorem~\ref{thm:mean_field_fN_f}), we are to prove the operator $\mathcal{H}_w$ preserves convergence rates:
\begin{proposition}[Mean-field for adjoint states] \label{thm:H_op_f_mean_field}
Let $f_N(t,x)=\frac{1}{N}\sum_{i=1}^N \delta(x-X_i(t))$ be the empirical measure where $\{X_i(t)\}$ solves~\eqref{eq:dX_wr_interact}, and $f$ solve~\eqref{eq:df_wr_mean_field}, with $X_{i,0}\overset{\text{i.i.d.}}{\sim} f_0$. Then there exists some constant $C=C(\CLip, \Cbdd, T,\nu)>0$ such that for any fixed $(t,x)\in[0,T]\times \R^d$:
\begin{equation}
\E\,\left|\mathcal{H}_w[f_{N}](t,x) - \mathcal{H}_w[f](t,x)\right| \leq C N^{-\lambda}\,, \quad N\to\infty\,,
\end{equation}
where the expectation is taken over the initial realization of $X_{i,0}$ and the convergence rate $\lambda$ depends on the dimension $d$.
\end{proposition}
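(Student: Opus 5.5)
We prove Proposition~\ref{thm:H_op_f_mean_field} through a deterministic stability estimate for the linear adjoint equation with respect to its coefficient measure, and then take expectations using Theorem~\ref{thm:mean_field_fN_f}(ii).

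For a generic measure path $\mu(t)$, the solution $h=\mathcal{H}_w[\mu]$ solves a backward linear transport equation with a nonlocal source:
\[
\partial_t h + (w\ast\mu)\cdot\nabla_x h = \nabla w\ast(h\mu) - (\nabla w\ast\mu)\, h,\qquad h(T)=\nabla_x\nu .
\]
We write it in mild (characteristic) form. Let $\Phi^\mu_{s,t}$ be the flow of the velocity $w\ast\mu$, which is globally Lipschitz with constant $\CLip$ and bounded by $\Cbdd$. Then
\[
h(t,x)=\nabla\nu(\Phi^\mu_{T,t}x)+\int_t^T\Big[(\nabla w\ast\mu)h - \nabla w\ast(h\mu)\Big](s,\Phi^\mu_{s,t}x)\,\d s .
\]

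\textbf{Uniform bounds.} From the mild form, Gronwall gives $\|h\|_{L^\infty}\le \|\nabla\nu\|_\infty e^{2\Cbdd T}$ uniformly in $\mu$, and in particular uniformly in $N$ when $\mu=f_N$. Differentiating in $x$ (this uses $\nabla^2 w$ bounded and $\nu\in C^2_b$) yields a bound on $\|\nabla_x h\|_{L^\infty}$ of the same type. Together with a fixed-point argument on $[0,T]$, these bounds also give the well-posedness alluded to earlier in the paper.

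\textbf{Stability.} Let $h=\mathcal{H}_w[f]$ and $h_N=\mathcal{H}_w[f_N]$, and set $e(t)=\sup_x|h-h_N|(t,x)$ and $\delta(t)=\mathcal{W}_1(f(t),f_N(t))$. The velocity fields satisfy $|w\ast f - w\ast f_N|\le \CLip\,\delta$, so the flows satisfy $|\Phi^f_{s,t}x-\Phi^{f_N}_{s,t}x|\le C\sup_{\tau}\delta(\tau)$ by Gronwall. In the difference of the mild forms:
\begin{itemize}
\item the terminal term is controlled by $\|\nabla^2\nu\|_\infty$ times the flow gap;
\item the source terms are controlled by $e(s)$, by the flow gap times $\|\nabla_x h\|_\infty$, and by the measure-difference terms $|(\nabla w\ast(f-f_N))(y)|\le \CLip\,\delta$ and $|\nabla w\ast(h(f-f_N))(y)|$.
\end{itemize}

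The last term is the main obstacle, because the test function $z\mapsto \nabla w(y-z)h(s,z)$ depends on the unknown $h$. Its Lipschitz constant in $z$ is at most $\Cbdd\|\nabla_x h\|_\infty+\CLip\|h\|_\infty$, which is bounded uniformly by the first step. This is exactly why we need the $C^1$ bound on $h=\mathcal{H}_w[f]$ and not merely $L^\infty$. Kantorovich–Rubinstein duality then bounds this term by $C\delta(s)$. Putting the pieces together,
\[
e(t)\le C\sup_{s}\delta(s)+C\int_t^T e(s)\,\d s,
\]
and Gronwall gives $e(t)\le C(\CLip,\Cbdd,T,\nu)\sup_{s\in[0,T]}\mathcal{W}_1(f(s),f_N(s))$.

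\textbf{Expectation.} Taking expectations, we need $\E\sup_s\mathcal{W}_1(f,f_N)\le CN^{-\lambda}$. To get this, we would use the standard coupling with nonlinear i.i.d.\ copies $\bar X_i$ driven by the law $f$. The pathwise bound $\sup_s|X_i-\bar X_i|$ is of order $N^{-1/2}$ in expectation (Sznitman), and we then add the Fournier--Guillin rate for $\mathcal{W}_1(\bar f_N(s),f(s))$. This gives a rate depending on $d$. Since $\mathcal{W}_1\le\mathcal{W}_p$, this is consistent with Theorem~\ref{thm:mean_field_fN_f}(ii). We could also simply apply (ii) at each fixed $s$, but then we would first need to bound $e(t)$ by $\int_t^T\delta$ rather than by $\sup\delta$. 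That is achievable by carrying the flow gap in integral form: the velocity mismatch enters the flow gap through $\int_t^s\delta(\tau)\,\d\tau$.
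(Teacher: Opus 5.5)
Your proof is correct and follows the same overall plan as the paper: Duhamel along characteristics, the bound $2\Cbdd\sup_x|\delta h|$ for the nonlocal linear part, Kantorovich--Rubinstein duality for every term containing $f-f_N$, Gr\"onwall, and then Theorem~\ref{thm:mean_field_fN_f}(ii). The decomposition differs, though.

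The paper subtracts the two adjoint equations \emph{before} passing to characteristics, and it follows only the flow of $w\ast f_N$. The velocity mismatch then enters as the source term $(w\ast(f-f_N))\cdot\nabla_x h$ rather than as a flow gap. This avoids any comparison of flows, and it needs no Lipschitz-in-$x$ estimate for $\nabla\nu$ or for the nonlocal source $(\nabla w\ast f)h-\nabla w\ast(hf)$. Gr\"onwall then gives directly $\sup_x|h-h_N|(t)\le C\int_t^T\mathcal{W}_1(f_N(s),f(s))\,\d s$, which is exactly what the fixed-time estimate (ii) can handle.

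Your route compares the two mild representations, each along its own flow. So you never differentiate $h$ along the characteristics of $w\ast f_N$. The price is the flow-gap Gr\"onwall and the extra Lipschitz bounds on $\nabla\nu$ and on the source. If you carry the flow gap in the integral form $\int_t^s\mathcal{W}_1$, as you suggest at the end, you arrive at the paper's bound.

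Two side remarks.

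First, your treatment of $\nabla w\ast(h\,(f-f_N))$ is more careful than the paper's. The paper bounds this term by $\CLip\mathcal{W}_1|h_2|$, where its $h_2=\mathcal{H}_w[f]$. That overlooks the contribution $\Cbdd\|\nabla_x h_2\|_{L^\infty}$ to the Lipschitz constant of $z\mapsto\nabla w(y-z)h_2(s,z)$. The conclusion survives because $\nabla_x h_2$ is bounded, but your accounting is the correct one.

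Second, in your $\sup_s$ variant, Fournier--Guillin is a fixed-time bound. It does not by itself control $\E\sup_s\mathcal{W}_1(\bar f_N(s),f(s))$. This is harmless here because the dynamics are deterministic:
\begin{itemize}
\item $\bar f_N(s)$ and $f(s)$ are push-forwards of $f_N(0)$ and $f_0$ by the same $e^{\CLip T}$-Lipschitz flow of $w\ast f$;
\item equivalently, $f_N$ is an exact measure solution, and Dobrushin's estimate gives $\sup_s\mathcal{W}_1(f_N(s),f(s))\le e^{CT}\mathcal{W}_1(f_N(0),f_0)$ pathwise.
\end{itemize}
Either way, only the time-zero rate is needed.
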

The proof essentially is the stability estimate of the PDE~\eqref{eq:nonlinear_adj_dxg_h}. We leave the proof to Appendix~\ref{subsec:pf_prop_31_deltah}. 


\subsubsection{Auxiliary propositions for Theorem~\ref{thm:gradient_err_estimate}}
\begin{proposition} \label{prop:D1}
Under the assumptions of Theorem~\ref{thm:gradient_err_estimate}, there exists some positive constant $C_1=C_1(\|\varphi\|_{W^{1,\infty}}, T, \|f_0\|_{L^1}, \Cbdd, \|\nu\|_{W^{1,\infty}})$ such that
\begin{equation}
\E\,|D_1|^2 = \E\left|\int \int_0^T \int (f-f_N)(t,x-r) f(t,x) h(t,x) \varphi(r)\,\d x\d t \d r \right|^2
\leq \frac{C_1}{N}\,,
\end{equation}
{where $\|\varphi\|_{W^{1,\infty}}:=\max (\|\varphi\|_{L^{\infty}}, \|\nabla \varphi\|_{L^{\infty}})$ is defined on the space of bounded Lipschitz functions.}
\end{proposition}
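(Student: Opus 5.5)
Our plan is to rewrite $D_1$ as a single pairing of $f-f_N$ against a test function, and then apply the weak mean-field estimate \eqref{eq:phi_weak_fN_f_err} of Theorem~\ref{thm:mean_field_fN_f}(i) pointwise in $t$. Substituting $y=x-r$ (so $r=x-y$) and using Fubini, we get
\begin{equation*}
D_1=\int_0^T \langle \psi(t,\cdot),\,(f-f_N)(t,\cdot)\rangle\,\d t,\qquad
\psi(t,y):=\int f(t,x)\,h(t,x)\cdot\varphi(x-y)\,\d x .
\end{equation*}
Here the function $\psi$ is deterministic: it depends only on the mean-field solution $f$ and its adjoint $h=\mathcal{H}_w[f]$. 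It does not depend on the particles. This is what makes $D_1$ the easiest of the three terms. Then Jensen or Cauchy--Schwarz in time gives
\begin{equation*}
\E|D_1|^2\le T\int_0^T \E\,|\langle \psi(t,\cdot),(f-f_N)(t,\cdot)\rangle|^2\,\d t \le T^2 \sup_{t\le T}\frac{C(t,\CLip,\|\psi(t)\|_{C^1},f_0)}{N}.
\end{equation*}
It therefore suffices to show $\sup_{t\in[0,T]}\|\psi(t,\cdot)\|_{C^1_b}<\infty$, with a bound independent of $N$, and to track the constant in \eqref{eq:phi_weak_fN_f_err} uniformly in $t\in[0,T]$. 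The constant grows at most exponentially in $t$ in the standard coupling proof, so this uniformity holds.

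For the $C^1$ bound on $\psi$, we differentiate under the integral:
\begin{equation*}
|\psi|+|\nabla_y\psi|\le \|\varphi\|_{W^{1,\infty}}\,\|h(t)\|_{L^\infty}\,\|f(t)\|_{L^1},
\end{equation*}
and mass conservation of \eqref{eq:df_wr_mean_field} gives $\|f(t)\|_{L^1}=\|f_0\|_{L^1}$. The remaining ingredient, and the main step, is a uniform bound $\sup_{t\le T}\|h(t)\|_{L^\infty}\le C(T,\Cbdd,\|\nu\|_{W^{1,\infty}})$ for the solution of \eqref{eq:nonlinear_adj_dxg_h}.

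We would prove this bound by integrating \eqref{eq:nonlinear_adj_dxg_h} backward along the characteristics $\dot x=(w*f)(x)$. This field is globally Lipschitz with constant $\CLip$, since $f(t)$ is a probability measure. Along a characteristic,
\begin{equation*}
\Big|\frac{\d}{\d t}h\Big|\le |\nabla w*(hf)|+|(\nabla w*f)\,h|\le 2\Cbdd\,\|h(t)\|_{L^\infty}.
\end{equation*}
Taking the supremum over starting points and applying Gronwall backward from $h(T)=\nabla\nu$ yields $\|h(t)\|_{L^\infty}\le \|\nabla\nu\|_{L^\infty}e^{2\Cbdd(T-t)}$.

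Existence of a bounded classical solution follows from the same fixed-point and semigroup argument the paper alludes to for \eqref{eq:nonlinear_fN_induce_hN}. We take it as given, and the a priori bound above is all we need. The genuine difficulty is therefore only the bookkeeping: making sure the constant in Theorem~\ref{thm:mean_field_fN_f}(i) depends on $\psi$ only through $\|\psi\|_{C^1}$ and is uniform on $[0,T]$. Combining the pieces gives $\E|D_1|^2\le C_1/N$ with $C_1$ depending on the listed quantities.
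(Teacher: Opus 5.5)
Your proposal is correct and follows essentially the same route as the paper: the substitution $y=x-r$, the deterministic test function $\psi(t,y)=\int f(t,x)h(t,x)\cdot\varphi(x-y)\,\d x$, a $C^1_b$ bound on $\psi$ from mass conservation and an $L^\infty$ bound on $h$, and then Theorem~\ref{thm:mean_field_fN_f}(i). The only differences are minor: you get the bound on $h$ by a direct Gronwall argument along characteristics rather than citing the semigroup Lemma~\ref{thm:semigroup_At_h_xi_sol}, and you spell out the Cauchy--Schwarz step in time and the uniformity in $t\in[0,T]$ of the constant in \eqref{eq:phi_weak_fN_f_err}, both of which the paper leaves implicit.
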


\begin{proof}
By Fubini's theorem and the change of variable that $y:=x-r$, we obtain
\begin{equation}
\begin{aligned}
D_1 
& = \int_0^T \int_{\R^d} \int_{\R^d} (f-f_N)(t,y) f(t,x) h(t,x) \varphi(x-y)\, \d x\d y\d t\,,\\
& = \int_0^T \int_{\R^d} (f-f_N)(t,y) \psi(t, y)\,\d y\d t\,,
\end{aligned}
\end{equation}
where $\psi(t, y)$ is defined as follows,
\begin{equation}\label{eq:D1_psi_ty_def}
\psi(t,y):= \int_{\R^d} f(t,x) h(t,x) \varphi(x-y)\,\d x \,.
\end{equation}
To deploy Theorem~\ref{thm:mean_field_fN_f} and estimate~\eqref{eq:phi_weak_fN_f_err}, we need to show that $\psi\in C^1_b$. Note that by using the H\"older inequality, we have:
\begin{equation*}
\begin{aligned}
\|\psi(t, \cdot)\|_{L^{\infty}(\R^d)} & \leq \|\varphi\|_{L^{\infty}} \|f(t,\cdot)\|_{L^1(\R^d)} \|h(t,\cdot)\|_{L^{\infty}(\R^d)}\,,\\
\|\nabla_y \psi(t,\cdot)\|_{L^{\infty}(\R^d)} & \leq \|\nabla \varphi\|_{L^{\infty}} \|f(t,\cdot)\|_{L^1(\R^d)} \|h(t,\cdot)\|_{L^{\infty}(\R^d)} \,.
\end{aligned}
\end{equation*}
Since the forward equation~\eqref{eq:df_wr_mean_field} preserves mass and $L^1$ norm, we have $f\in L^{\infty}([0,T],L^1(\R^d))$, and as stated in Lemma~\ref{thm:semigroup_At_h_xi_sol}, 
$h\in L^{\infty}([0,T],L^{\infty}(\R^d))$ with upper bound depending on $T$, $\Cbdd$ and $\|\nu\|_{W^{1,\infty}}$, then by taking the supremum over $[0,T]$, we have:
\begin{equation*} 
\|\psi\|_{L^{\infty}([0,T], W^{1,\infty}(\R^d))} \leq C(T,\|\varphi\|_{W^{1,\infty}}, \|f_0\|_{L^1}, \Cbdd, \|\nu\|_{W^{1,\infty}} )\,,
\end{equation*}
concluding the proposition.
\end{proof}


\begin{proposition} \label{prop:D2}
Under the assumptions of  Theorem~\ref{thm:gradient_err_estimate}, there exists some positive constant $C_2=C_2(\|\varphi\|_{W^1}, T, \Cbdd, \|\nu\|_{W^{2,\infty}})$ such that
    \begin{equation}
    \E\,|D_2|^2 = \E\left|\int \int_0^T \int f_N(t,x-r) (f-f_N)(t,x) h(t,x) \varphi(r)\,\d x\d t \d r \right|^2
    \leq \frac{C_2}{N}\,.
    \end{equation}
\end{proposition}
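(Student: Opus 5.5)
The difficulty in $D_2$ is that, unlike in Proposition~\ref{prop:D1}, the quantity tested against $(f-f_N)$ is itself random: by Fubini,
\[
D_2=\int_0^T\!\!\int_{\R^d}(f-f_N)(t,x)\,h(t,x)\,(\varphi\ast f_N)(t,x)\,\d x\,\d t ,
\qquad (\varphi\ast f_N)(t,x):=\int f_N(t,x-r)\varphi(r)\,\d r ,
\]
so Theorem~\ref{thm:mean_field_fN_f}(i), which requires a deterministic test function, cannot be applied directly. The plan is to split $\varphi\ast f_N=\varphi\ast f+\varphi\ast(f_N-f)$. This gives $D_2=D_{2,1}+D_{2,2}$, with
\[
D_{2,1}=\int_0^T\!\!\int (f-f_N)\,h\,(\varphi\ast f)\,\d x\,\d t,\qquad
D_{2,2}=-\int_0^T\!\!\iint K_t(x,y)\,(f-f_N)(t,x)\,(f-f_N)(t,y)\,\d x\,\d y\,\d t,
\]
where $K_t(x,y):=h(t,x)\cdot\varphi(x-y)$.

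\textbf{Linear term $D_{2,1}$.} Its test function $\psi_2(t,x)=h(t,x)\cdot(\varphi\ast f)(t,x)$ is deterministic. I will show it lies in $C^1_b$ uniformly in $t\in[0,T]$:
\begin{itemize}
\item $\|\varphi\ast f\|_{W^{1,\infty}}\le\|\varphi\|_{W^{1,\infty}}\|f_0\|_{L^1}$, by mass conservation;
\item $h=\nabla_x g$ and $\nabla_x h$ are bounded on $[0,T]$.
\end{itemize}
The second point is where $\|\nu\|_{W^{2,\infty}}$ enters. I would obtain it from the semigroup/characteristics estimate for~\eqref{eq:nonlinear_adj_dxg_h} (Lemma~\ref{thm:semigroup_At_h_xi_sol}), differentiated once more in $x$. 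This uses $w\in C_b^2$, so that the coefficients $w\ast f$, $\nabla w\ast f$, $\nabla w\ast(hf)$ and their $x$-derivatives are bounded by $\Cbdd$ times bounds on $h$. Then Theorem~\ref{thm:mean_field_fN_f}(i) applied at each $t$, together with Cauchy--Schwarz in $t$, gives $\E|D_{2,1}|^2\le C/N$.

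\textbf{Quadratic term $D_{2,2}$.} This is the main obstacle, because the crude bound $C\,d_{BL}(f,f_N)^2$ only yields the dimension-dependent Wasserstein rate of Theorem~\ref{thm:mean_field_fN_f}(ii). Instead I would use Sznitman's coupling. Let $\bar X_i$ solve $\dot{\bar X}_i=(w\ast f)(\bar X_i)$ with $\bar X_i(0)=X_{i,0}$. Then the $\bar X_i(t)$ are i.i.d.\ with law $f(t)$, and the standard Gronwall argument under Assumption~\ref{assump:property_w(r)} gives
\[
\sup_{t\le T}\E|X_i(t)-\bar X_i(t)|^2\le C/N .
\]
Let $\bar f_N$ be the empirical measure of the $\bar X_i$. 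I will replace $f_N$ by $\bar f_N$ in both slots of $D_{2,2}$. Since $K_t$ is bounded and Lipschitz in each variable with constants controlled by $\|h\|_{W^{1,\infty}}\|\varphi\|_{W^{1,\infty}}$, each replacement costs at most $C\,\frac1N\sum_i|X_i-\bar X_i|$. Its second moment is $O(1/N)$ by Jensen's inequality.

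It then remains to bound $\E\big|\iint K_t\,\d(f-\bar f_N)\otimes\d(f-\bar f_N)\big|^2$ for i.i.d.\ samples. Expanding,
\[
\iint K_t\,\d(f-\bar f_N)\otimes\d(f-\bar f_N)=\frac1{N^2}\sum_{i,j}\tilde K_t(\bar X_i,\bar X_j),
\]
where $\tilde K_t$ is the doubly centered kernel (subtract both marginal $f$-averages and add back the double average). For $i\ne j$ the terms are degenerate, so a second moment computation shows that only index pairings survive. This gives $O(N^{-2})$ for the off-diagonal part and $O(N^{-1})$ for the diagonal part $\frac1{N^2}\sum_i\tilde K_t(\bar X_i,\bar X_i)$, using only $\|\tilde K_t\|_\infty\le 4\|h\|_\infty\|\varphi\|_\infty$. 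Integrating in $t$ with Cauchy--Schwarz gives $\E|D_{2,2}|^2\le C/N$.

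Combining the two parts via $(a+b)^2\le2a^2+2b^2$ concludes the proposition, with $C_2$ depending on $\|\varphi\|_{W^{1,\infty}}$, $T$, $\Cbdd$ and $\|\nu\|_{W^{2,\infty}}$.
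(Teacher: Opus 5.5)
Your proposal is correct, and it takes a different and more careful route than the paper.

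\textbf{What the paper does.} It reaches the same representation,
$D_2=\frac1N\sum_i\int_0^T\!\int (f-f_N)(t,x)\,\psi^i(t,x)\,\d x\,\d t$ with $\psi^i(t,x)=h(t,x)\varphi(x-X_i(t))$. Summing over $i$ gives exactly your $h\cdot(\varphi\ast f_N)$. The paper then proves a deterministic $C^1_b$ bound on $\psi^i$ by writing the equation for $u=\nabla_x h$ along characteristics. This is where $\|\nu\|_{W^{2,\infty}}$ and $w\in C^2_b$ enter, exactly as in your treatment of $D_{2,1}$. Finally it applies Theorem~\ref{thm:mean_field_fN_f}(i) directly to $\psi^i$.

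\textbf{The gap your route closes.} As you observe, $\psi^i$ depends on the random position $X_i(t)$, which is correlated with $f_N$. Estimate (i) is stated for a fixed test function, so it does not literally cover this step. Controlling it uniformly over a $C^1_b$-ball would only give the dimension-dependent rate of (ii).

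\textbf{What your route does.}
Your splitting $\varphi\ast f_N=\varphi\ast f+\varphi\ast(f_N-f)$ isolates a genuinely deterministic test function for the linear part. It reduces the remainder to a quadratic form in $f-f_N$.
The Sznitman coupling then costs $O(N^{-1})$ in second moment. This works because $x\mapsto\int K_t(x,y)\,\d(f-f_N)(y)$ and $y\mapsto\int K_t(x,y)\,\d(f-\bar f_N)(x)$ are Lipschitz with deterministic constants, the other slot having total variation at most $2$.
The doubly centered $V$-statistic of the i.i.d.\ copies is even $O(N^{-2})$ in second moment, diagonal included, since the diagonal is deterministically $O(N^{-1})$.

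\textbf{What each approach buys.} The paper's argument is shorter and needs nothing beyond (i) and the bound on $\nabla_x h$. Yours additionally requires the coupling estimate $\sup_{t\le T}\E|X_i-\bar X_i|^2\le C/N$. That estimate is standard under Assumption~\ref{assump:property_w(r)} (it is the usual proof of (i)) and adds no hypotheses. In exchange, it makes the dimension-free $O(N^{-1})$ bound rigorous.

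The same coupling-plus-centering device is what would be needed to justify the analogous step in Proposition~\ref{prop:D3}. There, $\E|S(s,X_i(s))|^2$ is bounded by $\sup_{t,x}\E|S(t,x)|^2$, although $S$ is random and evaluated at the random point $X_i(s)$.
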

\begin{proof}
By plugging in $f_N(t,x)$ to $D_2$, and the change of variable that $y:=x-r$, we have
\begin{equation*}
\begin{aligned}
D_2 
= & \frac{1}{N}\sum_{i=1}^N \int \int_0^T \int \delta(y-X_i(t)) (f-f_N)(t,x) h(t,x) \varphi(x-y)\,\d y\d x\d t\,, \\
= & \frac{1}{N}\sum_{i=1}^N \int_0^T \int (f-f_N)(t,x) h(t,x) \varphi(x-X_i(t))\,\d x \d t\,. 
\end{aligned}
\end{equation*}
Define $\psi^{i}(t,x)=h(t,x) \varphi(x-X_i(t))$. To deploy Theorem~\ref{thm:mean_field_fN_f}, we need $\psi^i\in C^1_b$ for all $i$. By Lemma~\ref{thm:semigroup_At_h_xi_sol}, we already have $\psi^i(t,\cdot)\in C_b(\R^d)$. We are to show $\nabla\psi^i$ is bounded. Since $\varphi\in C_b^1$, it becomes showing $u(t,x):=\nabla_x h\in\R^{d\times d}$ is bounded entry-wise, then from~\eqref{eq:nonlinear_adj_dxg_h}, we deduce:
\begin{equation}\label{eq:dxh_u_eq}
    \partial_t u + (w\ast f)\cdot \nabla_x u
    + L u - \nabla^2 w \ast (hf) + (\nabla^2 w\ast f) \cdot h = 0\,,
    \ \text{with}\ u(T,x) = \nabla_x^2\nu(x)\,,
\end{equation}
where
\begin{equation*}(L u)_{jk} = \sum_{i=1}^d (\partial_{x_k} w_i\ast f)u_{ij} 
+ \sum_{i=1}^d (\partial_{x_j} w_i\ast f) u_{ik}\,.
\end{equation*}

Let $X(t)$ be the characteristics driven by the velocity field $w\ast f$, and following this characteristic, we have:
\begin{equation}
\begin{aligned}
    \frac{\d}{\d t} u(t,X(t)) + L(t,X(t)) u(t,X(t)) - \nabla^2 w \ast (hf)(t,X(t)) + (\nabla^2 w\ast f)\cdot h(t,X(t)) 
    = 0\,.
\end{aligned}
\end{equation}
The solution is as follows: 
\begin{equation}
\begin{aligned}
    u(t,X(t)) = & e^{\int_t^T L(\tau,X(\tau))\,\d \tau} u(T,X(T)) \\
    & - \int_t^T e^{-\int_s^t L(\tau,X(\tau))\,\d \tau} \left[\nabla^2 w \ast (hf)(s,X(s)) - (\nabla^2 w\ast f)h(s,X(s))\right] \,\d s\,.
\end{aligned}
\end{equation}
Therefore, for any $t\in[0,T]$,
\begin{equation*}
\begin{aligned}
    |u(t,X(t))| \leq & e^{\int_t^T L(\tau,X(\tau)) \,\d \tau} u(T, X(T)) \\
    & + \left|\int_t^T e^{-\int_s^t L(\tau,X(\tau))\,\d \tau} \left[\nabla^2 w \ast (hf)(s, X(s)) - (\nabla^2 w\ast f)h(s, X(s))\right] \,\d s\right|\,.
\end{aligned}
\end{equation*}
By the assumption that $w\in C_b^2$ (see~\eqref{eq:assump_Cb2}) and $f(t,\cdot)\in L^1(\R^d)$ for all time, the  factor $e^{\int_t^T L(\tau, X(\tau))\,\d \tau}$ is bounded. Thus, by using Lemma~\ref{thm:semigroup_At_h_xi_sol}, the fact that $f\in L^{\infty}([0,T],L^1(\R^d))$ and Assumption~\ref{assump:property_w(r)}, one has:
\begin{equation}\label{eq:u_on_xi_bdd}
\begin{aligned}    
    |u(t, X(t))| 
    \lesssim & C\left(|u(T, X(T))| \right.\\
    & + \left.\int_t^T \left|\nabla^2 w \ast (hf)(s, X(s))\right| \,\d s
    + \int_t^T \left|(\nabla^2 w\ast f)h(s, X(s))\right| \,\d s\right)\,,\\
    \leq & C\left(|\nabla_x^2\nu( X(T))| 
    + \sup_t \left(\|h(t,\cdot)\|_{L_x^{\infty}} \| f(t,\cdot)\|_{L_x^1}\right)\right) \,,\\
    \leq & C(\Cbdd, T, \|\nu\|_{W^{2,\infty}}, \|\varphi\|_{W^1})\,.
\end{aligned}
\end{equation}
With boundedness in $u$, we have $\psi^i\in C_b^1$. Deploying~\eqref{eq:phi_weak_fN_f_err}, we conclude the proof.
\end{proof}

\begin{proposition} \label{prop:D3}
Under the assumptions of  Theorem~\ref{thm:gradient_err_estimate}, there exists some positive constant $C_3=C_3(\|\varphi\|_{C^1}, T,\|h\|_{W^{1,\infty}}, \|h_N\|_{W^{1,\infty}}, \Cbdd)$ such that
\begin{equation}
\E\,|D_3|^2
= \E\,\left|\int \int_0^T \int f_N(t,x-r) f_N(t,x) (h-h_N)(t,x) \varphi(r)\,\d x\d t \d r \right|^2
\leq \frac{C_3}{N}\,.
\end{equation}
\end{proposition}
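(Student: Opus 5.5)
The plan is to reduce $D_3$ to a discrete average over particles, derive a closed ODE system for the adjoint error evaluated along the particle trajectories, and close it with Gronwall plus a mean-field estimate for the source terms.

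\textbf{Reduction to particle positions.} Substituting $f_N$ into $D_3$ collapses both integrals, giving
\[
D_3=\frac{1}{N^2}\sum_{i,j=1}^N\int_0^T (h-h_N)(t,X_i(t))\,\varphi(X_i(t)-X_j(t))\,\d t .
\]
Set $E_i(t):=h(t,X_i(t))-h_N(t,X_i(t))$. By \eqref{eq:Yi=1/N_dgdx} this is $E_i=h(t,X_i(t))-Y_i(t)$. Cauchy--Schwarz then gives
\[
\E|D_3|^2\le T^2\|\varphi\|_{L^\infty}^2\,\E\Big[\frac1N\sum_i\sup_{t}|E_i(t)|^2\Big].
\]
It therefore suffices to show $\E\frac1N\sum_i\sup_t|E_i|^2\le C/N$.

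\textbf{Error equation along particle characteristics.} The equation for $Y_i$ is \eqref{eq:nolinear_dhdt_Xi}. For $h(t,X_i(t))$, I differentiate along $\dot X_i=(w\ast f_N)(X_i)$ and use \eqref{eq:nonlinear_adj_dxg_h}, which is written with velocity $w\ast f$. Subtracting the two yields
\[
\dot E_i+\frac1N\sum_j\nabla w(X_i-X_j)\,E_i-\frac1N\sum_j\nabla w(X_i-X_j)\,E_j=R_i,\qquad E_i(T)=0 .
\]
The remainder $R_i(t)$ is a sum of three consistency errors evaluated at $x=X_i(t)$:
\[
\big((w\ast(f-f_N))\cdot\nabla h\big)(x),\qquad \big((\nabla w\ast(f-f_N))\cdot h\big)(x),\qquad \nabla w\ast\big(h(f-f_N)\big)(x).
\]
Each has the form $\langle\psi_x,f-f_N\rangle$ with test functions $\psi_x(y)\in\{w(x-y),\,\nabla w(x-y),\,\nabla w(x-y)h(t,y)\}$. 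These are $C^1_b$ uniformly in $x$ and $t$, with norms controlled by $\Cbdd$ and $\|h\|_{W^{1,\infty}}$; the latter is bounded by Lemma~\ref{thm:semigroup_At_h_xi_sol} and the estimate \eqref{eq:u_on_xi_bdd}.

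Since the coefficients of the linear system are bounded by $\Cbdd$, I take the inner product with $E_i$, average over $i$, and apply backward Gronwall from $t=T$. This gives
\[
\frac1N\sum_i\sup_t|E_i|^2\le C(T,\Cbdd)\int_0^T\frac1N\sum_i|R_i(t)|^2\,\d t .
\]

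\textbf{Main obstacle: bounding $\E|R_i|^2$.} Theorem~\ref{thm:mean_field_fN_f}(i) applies to a \emph{fixed} test function. Here, however, the evaluation point $X_i(t)$ is random and correlated with $f_N$. The Wasserstein bound (ii) would handle this uniformly in $x$, but only at the rate $N^{-\lambda}$, which is too weak. I would instead use Sznitman's coupling:
\begin{itemize}
\item[--] Let $\bar X_i$ be i.i.d.\ McKean--Vlasov trajectories driven by $w\ast f$, with the same initial data $X_{i,0}$, and let $\bar f_N$ be their empirical measure.
\item[--] Under Assumption~\ref{assump:property_w(r)}, the coupling estimate $\E|X_i-\bar X_i|^2\le C/N$ holds. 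Together with the uniform Lipschitz bounds on $\psi_x$ (in both $x$ and $y$), it lets me replace $X_i$ by $\bar X_i$ and $f_N$ by $\bar f_N$ at cost $O(N^{-1/2})$ in $L^2$.
\item[--] For the remaining term $\langle\psi_{\bar X_i},f-\bar f_N\rangle$, I condition on $\bar X_i$. The other $\bar X_j$, $j\ne i$, are independent of $\bar X_i$ and have law $f(t)$, so the variance bound gives $C\|\psi\|_\infty^2/N$. The diagonal term $j=i$ contributes $O(1/N)$.
\end{itemize}
This yields $\E|R_i(t)|^2\le C/N$ uniformly in $i$ and $t$. Inserting this into the Gronwall bound and then into the reduction of the first step gives $\E|D_3|^2\le C_3/N$.
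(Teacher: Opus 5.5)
Your argument is correct, and it shares the paper's skeleton. You collapse $D_3$ onto the particle positions, use $h_N(t,X_i(t))=Y_i(t)$, and run a Gronwall argument along the particle characteristics with zero terminal data. The paper does this through the PDE for $\delta h$ and an $\ell^1$-average in Lemma~\ref{lem:deltah_sol}; your $\ell^2$ energy estimate on the ODE for $E_i$ is equivalent. Both then bound the source by a mean-field estimate.

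The genuine difference is in that last step. The paper passes from $\frac1N\sum_i\E|S(s,X_i(s))|^2$ to $\sup_{t,x}\E|S(t,x)|^2$ and then applies Theorem~\ref{thm:mean_field_fN_f}(i) at a frozen point $x$. This ignores exactly the correlation between $X_i(s)$ and $f_N$ that you flag. In addition, the paper's $S_2,S_3$ carry the random $h_N$ inside the test function, and the step $\E|\nabla w\ast(h_N\,\delta f)|^2\le C\|h_N\|_{L^\infty}^2\,\E|\nabla w\ast\delta f|^2$ does not hold for the signed measure $\delta f$.

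Your splitting $hf-h_Nf_N=\delta h\,f_N+h\,\delta f$ is the one actually consistent with the operator $\mathcal{A}_t$ of Lemma~\ref{lem:deltah_sol}, which is built from $f_N$. It leaves only the deterministic $h$ in the test functions. Your Sznitman coupling then legitimately handles the random evaluation point at rate $N^{-1/2}$, which neither (i) nor the Wasserstein bound (ii) delivers.

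The cost of your route is the standard coupling estimate $\E|X_i-\bar X_i|^2\le C/N$ and joint Lipschitz continuity of $(x,y)\mapsto\nabla w(x-y)h(t,y)$. That requires $w\in C_b^2$ and $\nabla h\in L^\infty$, both available here. In return, your constant depends only on $T$, $\Cbdd$, $\|\varphi\|_{L^\infty}$ and $\|h\|_{W^{1,\infty}}$, not on the random $h_N$.

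A minor point: the energy Gronwall gives $\sup_t\frac1N\sum_i|E_i|^2$ rather than $\frac1N\sum_i\sup_t|E_i|^2$. That already suffices, since $|D_3|\le\|\varphi\|_{L^\infty}\int_0^T\frac1N\sum_i|E_i(t)|\,\d t$.
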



\begin{proof}
Plugging in $f_N$ to the expression of $D_3$ defined  in~\eqref{eq:weak_sum_D123} yields
\begin{equation}\label{eq:D3}
\begin{aligned}
D_3
= & \int_0^T \frac{1}{N^2} \sum_{i,j=1}^N (h-h_N)(t,X_i(t)) \varphi(X_i(t)-X_j(t))\,\d t\,.
\end{aligned}
\end{equation}
Then
\begin{equation}\label{eq:ED3_sum_deltah}
\begin{aligned}
|D_3|^2
\leq & \int_0^T \Big| \frac{1}{N} \sum_{i=1}^N (h-h_N)(t,X_i(t)) \frac{1}{N} \sum_{j=1}^N\varphi(X_i(t)-X_j(t)) \Big|^2\,\d t\,,\\
\leq & C(\|\varphi\|_{L^{\infty}}) \int_0^T \Big|\frac{1}{N}\sum_{i=1}^N |h-h_N|(t, X_i(t)) \Big|^2\,\d t\,,\\
\E\,|D_3|^2 \leq & C(\|\varphi\|_{L^{\infty}}) \int_0^T \E \Big|\frac{1}{N}\sum_{i=1}^N |h-h_N|(t, X_i(t)) \Big|^2\,\d t\,.
\end{aligned}
\end{equation}

To estimate $h-h_N$, we subtract~\eqref{eq:nonlinear_adj_dxg_h} and~\eqref{eq:nonlinear_fN_induce_hN} to deduce the equation for $h-h_N$. With the calculation in Lemma~\ref{lem:deltah_sol}, 
we obtain 
the following estimate 
\begin{equation}\label{eq:mean_delta_h_Gronwall}
\begin{aligned}
\frac{1}{N}\sum_{i=1}^N |h-h_N|(t, X_i(t)) 
\leq &  C(\Cbdd,T) \int_0^t \frac{1}{N} \sum_{i=1}^N |S(s,X_i(s))|\d s\,,
\end{aligned}
\end{equation}
where $S(t,x)$ denotes the source term
\begin{equation}\label{eq:H_dh_source_def}
S(t,x) = \underbrace{-(w\ast (f-f_N)) \nabla_x h
}_{S_1}
+ \underbrace{\nabla w\ast (h_N (f-f_N))}_{S_2}   
+ \underbrace{(- \nabla w \ast (f-f_N)) h_N}_{S_3}\,.
\end{equation}

Upon taking the expectation of~\eqref{eq:mean_delta_h_Gronwall}, one has:
\begin{eqnarray}
\E\left|\frac{1}{N}\sum_{i=1}^N |h-h_N|(t, X_i(t)) \right|^2  \nonumber 
& \leq &
C(\Cbdd,T)\, \E\, \left|\int_0^t \frac{1}{N} \sum_{i=1}^N |S(s,X_i(s))|\d s \right|^2\,, \nonumber  \\
& \leq & C(\Cbdd,T) \int_0^t \E\,\left|\frac{1}{N} \sum_{i=1}^N |S(s,X_i(s))|\right|^2 \,\d s \,,  \nonumber \\
& \leq & C(\Cbdd,T) \int_0^t \frac{1}{N} \sum_{i=1}^N \E\, |S(s,X_i(s))|^2 \d s \,, \nonumber \\
& \leq &  C(\Cbdd,T) \sup_{t,x} \E\, |S(t,x)|^2\,,\label{eq:E_sum_deltah_source}
\end{eqnarray}
where we apply Jensen's inequality.

Then we are left to show that $\E\,|S(t,x)|^2\leq \frac{C}{N}$ for some $C$. According to~\eqref{eq:H_dh_source_def}, we are to examine each term and apply  estimate~\eqref{eq:phi_weak_fN_f_err} in Theorem~\ref{thm:mean_field_fN_f}  repeatedly. For $S_1$:
\begin{equation}\label{eq:EH1_sqrtN}
\begin{aligned}
\E\, |S_1(t,x)|^2 = & \E\,|(w\ast (f-f_N))(t,x) \nabla_x h(t,x)|^2\,, \\
\leq & \sup_t \|\nabla_x h(t,\cdot)\|^2_{L^{\infty}(\R^d)}\, \E\,|(w\ast (f-f_N))(t,x)|^2\,,  \\
\leq & C(\|h\|_{L^{\infty}([0,T],W^{1,\infty}(\R^d))}, \Cbdd) \frac{1}{N}\,.
\end{aligned}
\end{equation}
For $S_2$: by Lemma~\ref{thm:semigroup_At_h_xi_sol}, $h_N\in L^{\infty}([0,T],L^{\infty}(\R^d))$:
\begin{equation}\label{eq:EH2_sqrtN}
\begin{aligned}
\E\, |S_2(t,x)|^2 = & \E\,|\nabla w\ast (h_N (f-f_N))(t,x)|^2\,,\\
\leq & C(\|h_N\|_{L^{\infty}([0,T],L^{\infty}(\R^d))})\E\,|\nabla w\ast (f-f_N)(t,x)|^2\,,\\
\leq & C(T, \|h_N\|_{L^{\infty}}, \CLip, \Cbdd) \frac{1}{N}\,.
\end{aligned}
\end{equation} 
Similarly for $S_3$:
\begin{equation}\label{eq:EH3_sqrtN}
\begin{aligned}
\E\, |S_3(t,x)|^2 = & \E\,|(\nabla w\ast (f-f_N))(t,x) h_N(t,x)|^2
\leq C(T, \|h_N\|_{L^{\infty}}, \CLip, \Cbdd) \frac{1}{N}\,.
\end{aligned}
\end{equation} 
Plugging in the above estimates into~\eqref{eq:E_sum_deltah_source} and ~\eqref{eq:ED3_sum_deltah} concludes the proof.

\end{proof}

\section{Numerical results}\label{sec:numerics}
Numerical evidences confirm our theoretical findings, and we present them here. In particular:
\begin{itemize}
    \item For single-particle transport system, we numerically verify the consistency between particle-based gradient and transport equation-based gradient (Theorem~\ref{thm:gradient_consistency}).
    \item For interacting particle system, we numerically present the error between $\frac{\delta \mathsf{J}}{\delta w}$ and $\frac{\delta \mathcal{J}}{\delta w}$ and demonstrate its Monte Carlo (MC) rate of $\mathcal{O}(N^{-1/2})$ (Theorem~\ref{thm:gradient_err_estimate}).
    \item As an exploratory effort, we also run gradient-based optimization solvers for the reconstruction of interaction kernels.
\end{itemize}
These three components are respectively discussed in the following three subsections.

\subsection{First variation for the transport equation.}\label{subsec:numerics_grad_ax}
We dedicate this subsection to numerically demonstrate Theorem~\ref{thm:gradient_consistency}.

As a simple example, we set $x\in \R$, a constant velocity $a(x)\equiv 1$, with a uniform initial distribution, that is, $f_{0}(x)=\mathbf{1}_{[-0.5, 0.5]}(x)$ and $X_0\sim \mathrm{Unif}[-0.5, 0.5]$. The simulation is run up to the final time $T=0.5$, when we measure the solution using the test function of 
$$
\nu(x):=\frac{1}{\sqrt{0.1\pi}}\exp\left(- 10 x^2\right).
$$

In this simple setup, explicit solutions are available:
\begin{equation*}
\begin{aligned} 
& f(t,x) = f_0(x-t)\,, & g(t,x) = \nu(x+(T-t))\,,\\
& X(t) = X_0 + t\,, & Y(t) = Y(T) = \nabla_x\nu(X(T))\,,
\end{aligned}
\end{equation*}
to be assembled in $\frac{\delta \mathcal{J}}{\delta a}$ and $\frac{\delta \mathsf{J}}{\delta a}$ (see~\eqref{eq:dJ_da_transport_fg} and~\eqref{eq:dJ_da_transport_Y} for their definitions). The theorem states $\frac{\delta \mathcal{J}}{\delta a}(x;f_0)=\mathbb{E}_{y\sim f_0}\left[\frac{\delta \mathsf{J}}{\delta a}(x;y)\right]$. For this example, 
\begin{itemize}
    \item $\frac{\delta \mathcal{J}}{\delta a}(x;f_0)=\int_0^T \frac{-20(x+(T-t))}{\sqrt{0.1\pi}}\exp\left(- 10 (x+(T-t))^2\right) \mathbf{1}_{[-0.5,0.5]}(x-t)\,\d t$.
    \item To simulate $\mathbb{E}_{y\sim f_0}\left[\frac{\delta \mathsf{J}}{\delta a}(x;y)\right]$, we draw $\{y_i\}_{i=1}^N\sim f_0$ with $N\gg 1$ to compute an empirical mean, and numerically approximate the Dirac delta in~\eqref{eq:dJ_da_transport_Y} by a smooth mollifier $\phi_{\eps}(r):=\frac{1}{\eps}\phi\left(\frac{r}{\eps}\right), \eps\ll 1$. We set $\eps=0.02$. We denote the solution $\frac{\delta\mathsf{J_N}}{\delta a}$.
\end{itemize}


In Figure~\ref{fig:compute_grad_const_a1}(A), we plot $\frac{\delta \mathcal{J}}{\delta a}$ as a function of $x$, and the empirical mean approximation to $\frac{\delta\mathsf{J_N}}{\delta a}$ using $N=10^4$ particles. They are in good agreement visually. To quantify $\|\frac{\delta\mathsf{J_N}}{\delta a} - \mathbb{E}\frac{\delta\mathsf{J}}{\delta a}\|_\infty$,
we show in Figure~\ref{fig:compute_grad_const_a1}(B) the empirical mean averaged over $20$ independent runs (circles), for an increasing number of particles. 

\begin{figure}[htbp]
    \centering
    \begin{subfigure}[b]{0.42\textwidth}
         \centering
         \includegraphics[width=\textwidth]{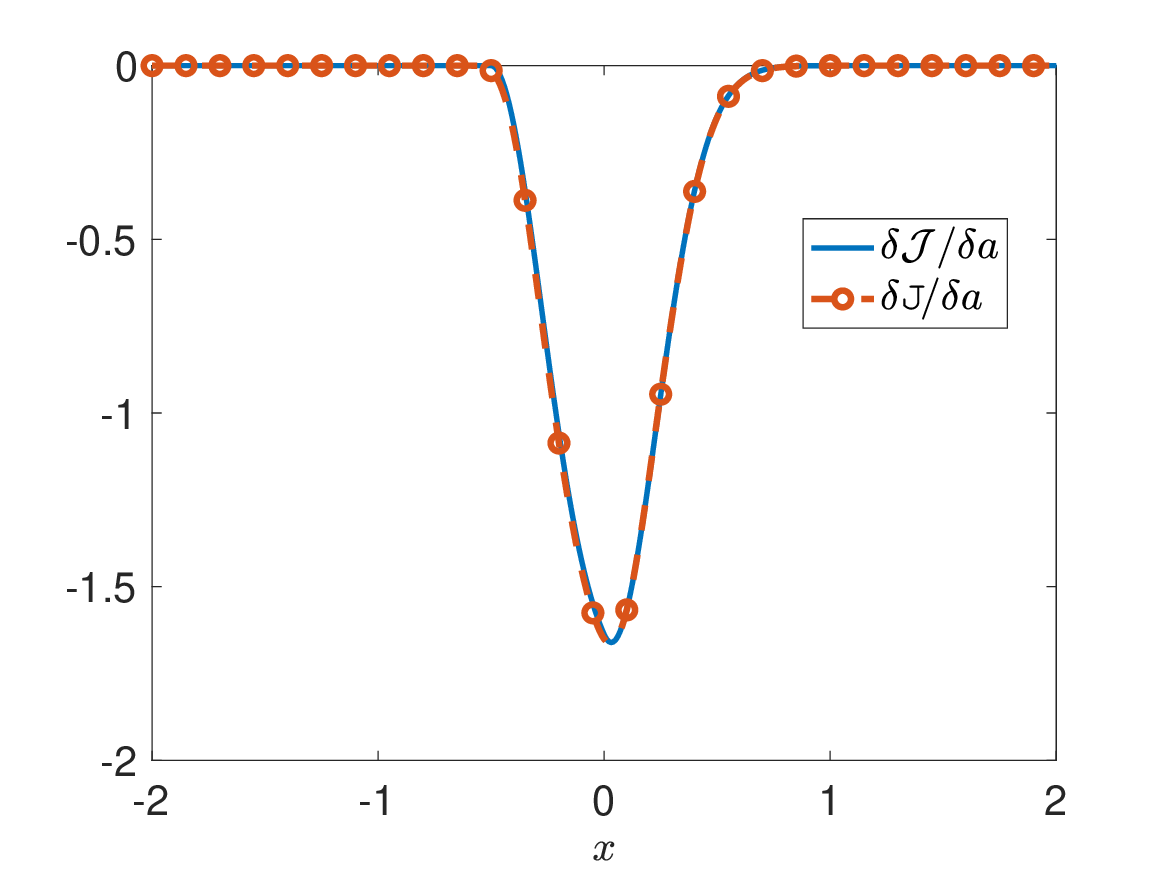}
    \caption{First variation computed using $N=10^4$ particles (red circle), and the solution of the transport equation (blue solid line).}
    \label{fig:2gradients_a1}
     \end{subfigure}
     \hfill
     \begin{subfigure}[b]{0.42\textwidth}
         \centering
         \includegraphics[width=\textwidth]{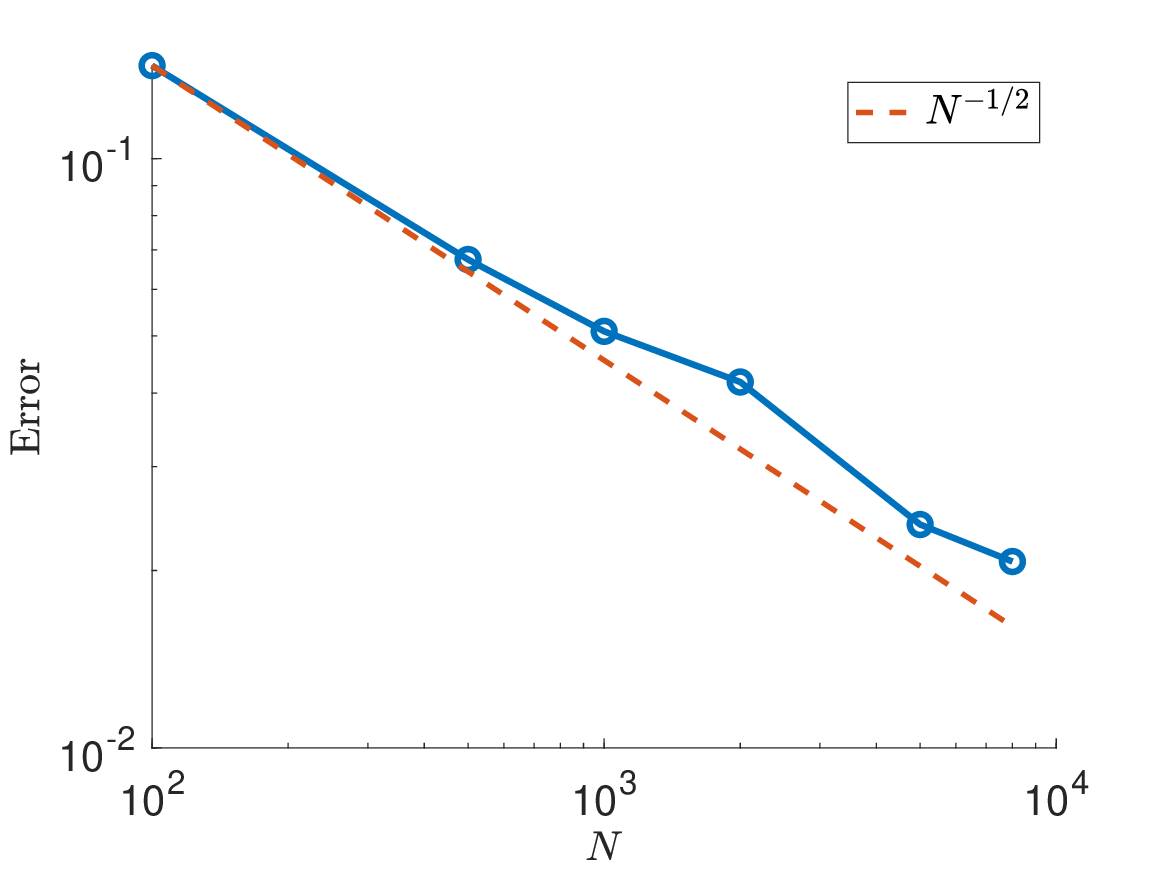}
    \caption{$l^\infty$-error. Blue line shows the error decay as $N$ changes from $100$ to $8000$ and red dashed line is the MC rate of $\mathcal{O}(N^{-1/2})$.}
    \label{fig:err_2grad_a1}
    \end{subfigure}
    \caption{First variation of the objective function with respect to the velocity field $a$.}
    \label{fig:compute_grad_const_a1}
\end{figure}

\subsection{First variation for the interaction kernel.} \label{subsec:numerics_grad_wr}
In this subsection, we provide numerical evidence for Theorem~\ref{thm:gradient_err_estimate} by comparing the two first variations computed by the PDE and the particle system.

We set $x\in \R$, $T=1$, and the interaction kernel and the measurement function are
\begin{equation}\label{eq:kernel_wr_gauss_measure}
w(r)=\frac{5}{\sqrt{2\pi}} r e^{-\frac{r^2}{2}}\,,  \quad
\nu(x) = \frac{1}{\sqrt{2\pi}} e^{-\frac{x^2}{2}}\,,
\end{equation}
which appear in~\eqref{eq:Jf_wr_continuous_opt} and~\eqref{eq:JX_wr_particle_opt}. Initially, the system follows the mollified uniform distribution on $[-0.5, 0.5]$, 
i.e., with $\phi_{\eps}(x) = \frac{1}{\eps}\phi(\frac{x}{\eps}), \eps \ll 1$, being a Gaussian mollifier, 
\begin{equation}\label{eq:fin_gauss_001}
f_0(x):=\phi_{\eps}\ast \mathbf{1}_{[-0.5, 0.5]}\,.
\end{equation}
The PDE~\eqref{eq:df_wr_mean_field} is then simulated by a forward Euler scheme together with an upwind method to compute the numerical flux. The domain is truncated at some large $L$ and the time step is chosen so that CFL condition is satisfied. To compute the first variation, we zero-extend $f$ to the whole space and compute $\nabla_x g$ numerically by central difference.

To simulate the particle system, we first draw initial samples from $f_0$ using rejection sampling. Forward Euler with time discretization $\Delta t = 0.01$ is used to simulate $\{X_{i}(t)\}$,  
and for compatibility, backward Euler is used to simulate the adjoint variable $\{Y_i(t)\}$ with the same time stepping. To assemble the first variation~\eqref{eq:nonlinear_gradient_NdYi}, numerically we approximate the Dirac delta by a smooth mollifier $\phi_{\eps}(r):=\frac{1}{\eps}\phi\left(\frac{r}{\eps}\right), \eps\ll 1$ and compute:
\begin{equation} \label{eq:particle_grad_approx}
\begin{aligned}
    \dfrac{\delta \mathsf{J}}{\delta w}(r) &  
    \approx \frac{1}{N^2}\sum_{i=1}^N \sum_{j\not=i} \int_0^T Y_i(t)\phi_{\eps}(r-(X_i-X_j)(t))\,\d t \,.
\end{aligned}
\end{equation}
with $\varepsilon =0.02$.

For six different choices of $N$ ranging from $20$ to $400$, the first variation~\eqref{eq:particle_grad_approx} is computed and averaged over $20$ independent runs. In Figure~\ref{fig:wr_gradients}(A), we plot the reference first variation $\frac{\delta\mathcal{J}}{\delta w}$ and the first variation $\frac{\delta\mathsf{J}}{\delta w}$ computed using particle systems using $N=20$ (red circles) and $N=400$ (blue plus signs) particles. It is evident that that $20$-particle-system is not able to capture some fine feature of the true gradient, such as some stiff derivative near $r=0$. In Figure~\ref{fig:wr_gradients}(B), the relative $l^\infty$-error with respect to the particle number is shown in the solid line in the $\log$-$\log$ scale. We numerically observe a convergence rate slightly larger than the predicted MC rate $\mathcal{O}(1/\sqrt{N})$.

\begin{figure}[htbp]
    \centering
    \begin{subfigure}[b]{0.42\textwidth}
         \centering
         \includegraphics[width=\textwidth]{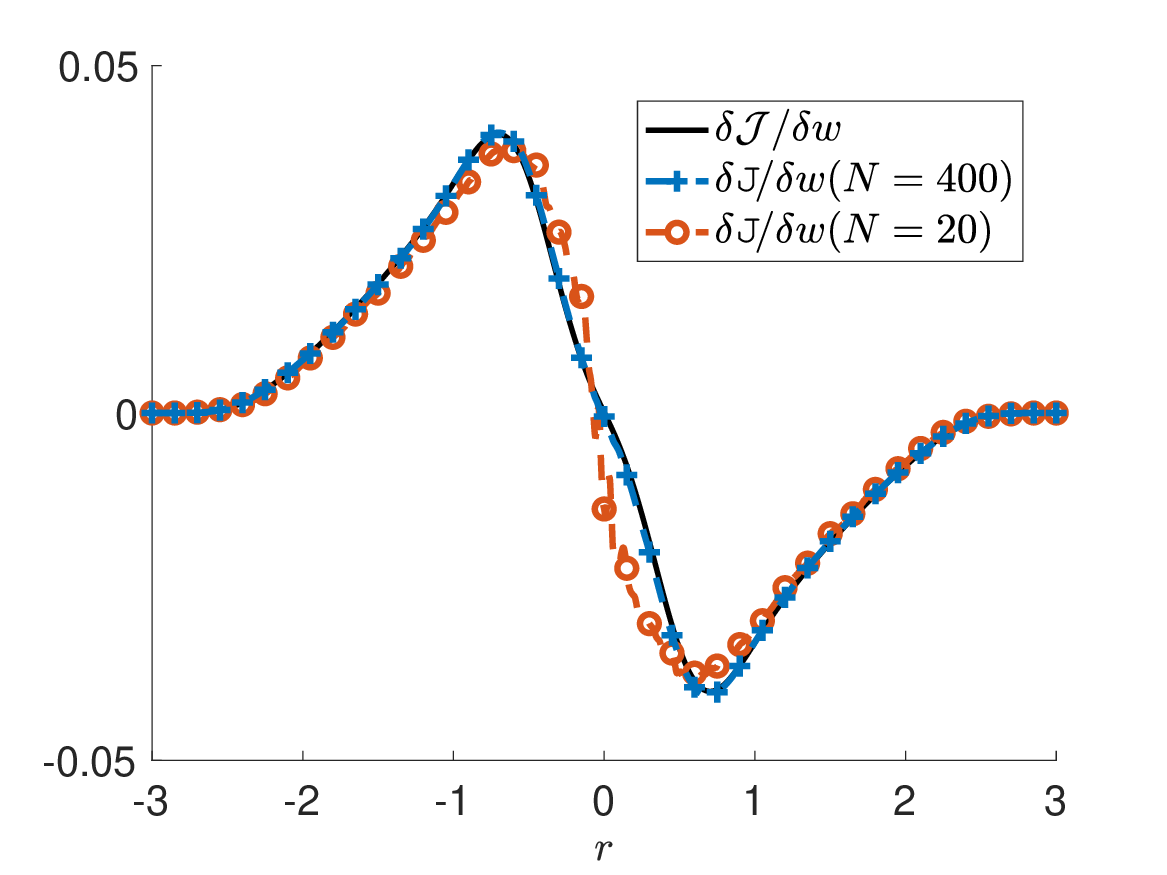}
    \caption{First variation computed using $N=20$ (red circle), $N= 400$ (blue plus) particles. The black solid line is the first variation computed by the corresponding PDE.}
    \label{fig:2grad_wr_N20}
    \end{subfigure}
    \hfill
    \begin{subfigure}[b]{0.42\textwidth}
         \centering
         \includegraphics[width=\textwidth]{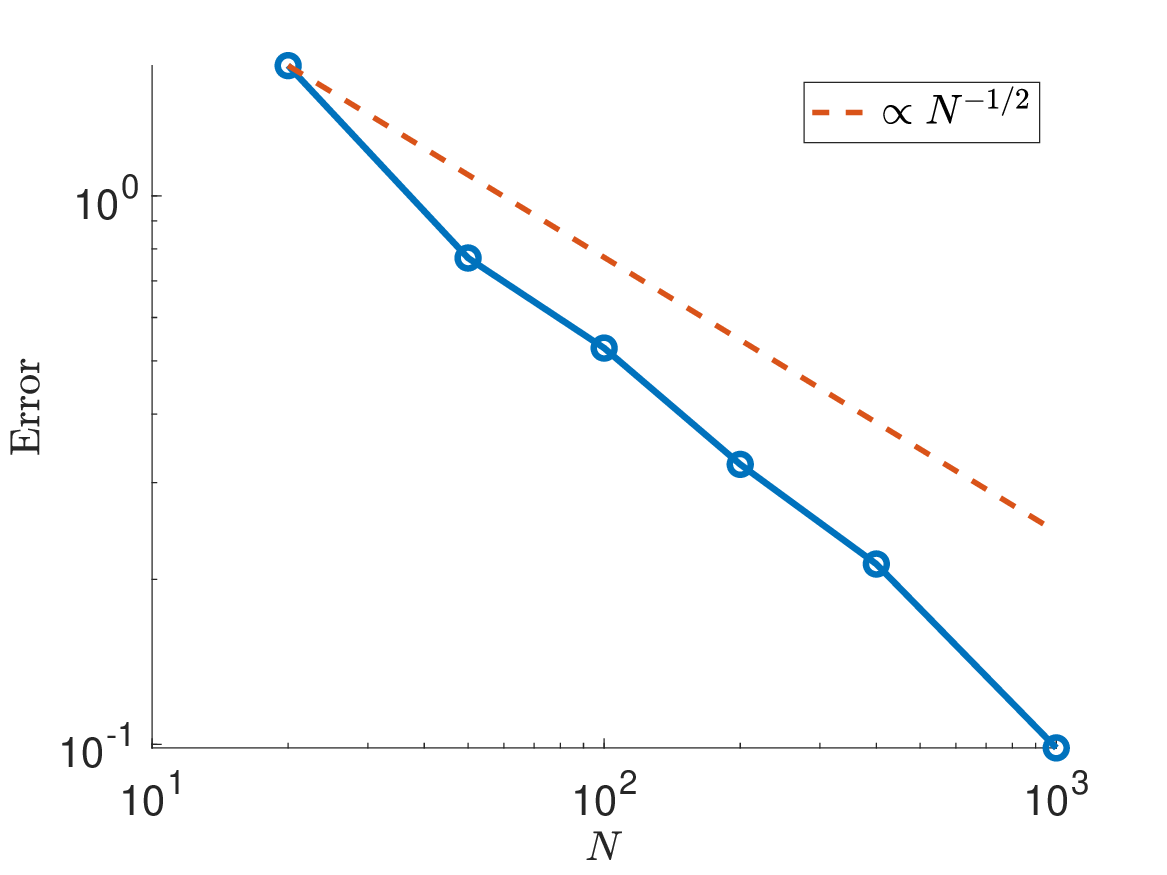}
    \caption{Relative $l^\infty$-error in terms of the number of particles $N$ from $20$ to $1000$, and the dashed red line showing the theoretical MC rate $\mathcal{O}(N^{-1/2})$.}
    \label{fig:2grad_wr_err}
    \end{subfigure}
    \caption{First variation of the objective function with respect to the interaction kernel $w$.}
    \label{fig:wr_gradients}
\end{figure}

\subsection{Reconstruction of the interaction kernel}\label{subsec:inverse_problem}
This section is exploratory: we apply the computation of the first variation to run the optimization for the reconstruction of the interacting kernel. The data is produced by the underlying density $f$, while the inversion is produced on the particle level through solving the ODE-constrained optimization~\eqref{eq:JX_wr_particle_opt}.

To setup the problem, we first generate synthetic data. Let the ground truth interaction kernel be $w^\ast(r)$. We solve for $f^\ast$ by simulating~\eqref{eq:df_wr_mean_field}. The measurement is thus $d^\ast = \int_{\R^d} \nu(x) f^\ast(T,x)\,\d x$, and we have the objective function $\mathsf{J}$:
\begin{equation}\label{eq:particle_inverse_J}
\mathsf{J}[\{X_i\}] = \frac{1}{2}\left(\frac{1}{N}\sum_{i=1}^N \nu(X_i(T)) - d^\ast\right)^2\,.
\end{equation}

This objective function is not immediately in the form that agrees with~\eqref{eq:JX_wr_particle_opt}, and accordingly we adjust the terminal time condition of the adjoint state $\{Y_i\}$:
\begin{equation}\label{eq:inverse_YT_BC}
Y_i(T) = \nabla_x\nu(X_i(T))\left(\frac{1}{N}\sum_{i=1}^N \nu(X_i(T)) - \int \nu(x) f^\ast(T,x)\,\d x\right)\,.
\end{equation}

Throughout this section, we assume that the interaction kernel $w(r)$ can be parameterized by:
\begin{equation}\label{eq:wr_param}
w(r):=\sum_{l=1}^L a_l b_l(r)\,,
\end{equation} 
with $\{b_l(r)\}_{l=1}^L$ being a set of fixed odd basis functions and $a_l$ being the coefficients. This translates the reconstruction of $w$ into that of $\{a_l\}$. We assume the ground truth kernel also shares the same parameterization form $w^\ast(r)=\sum_{l=1}^L a^\ast_l b_l(r)$.

For optimization, we run the classical Gradient Descent with backtracking line search, as summarized in Algorithm~\ref{alg:GD}. It should be noted that the algorithm updates the parameters $\{a_l\}_{l=1}^L$ with:
\begin{equation}\label{eq:opt_param_update}
a_l^{(k+1)} = a_l^{(k)} - \tau_l^{(k)} \partial_{a_l}\mathsf{J}\,,
\end{equation}
where the derivative $\partial_{a_l}\mathsf{J}$ is the first variation~\eqref{eq:nonlinear_gradient_NdYi} projected to the basis of $b_l$:
\begin{equation}\label{eq:opt_param_gradient}
\partial_{a_l}\mathsf{J}=\left\l \frac{\delta \mathsf{J}}{\delta w}\,, \frac{\partial w}{\partial a_l} \right\r_r 
= \frac{1}{N^2} \sum_{i=1}^N \sum_{j\not=i} \int_0^T Y_i(t)  b_l(X_i(t)-X_j(t))\, \d t\,.
\end{equation}
To quantify the performance of the algorithm, we will also calculate the error of the reconstructed interaction kernel and the ground truth at the $k$-th iteration, given by
\begin{equation}\label{eq:L_infty_error}
    E^{(k)}:=\|w^{(k)}(r) - w^\ast(r)\|_{L^{\infty}}/\|w^\ast(r)\|_{L^{\infty}}\,.
\end{equation}
\begin{algorithm}
\caption{Gradient Descent for Interaction Kernel Reconstruction.}
\label{alg:GD}
\begin{algorithmic}
\Require Error tolerance $\delta$, initial condition $f_0$, particle number $N$.
\State Draw initial particle positions independently $X_i^{in} \sim f_0$, for $i=1,\cdots,N$.
\While{Error $\mathsf{J}^{(k)}[w]~\eqref{eq:particle_inverse_J} \geq \delta$}
    \State Run forward~\eqref{eq:dX_wr_interact} and adjoint~\eqref{eq:nonlinear_adjoint_NdYi} particle simulations to obtain $\{X_i(t), Y_i(t)\}_{i=1}^N$.
    \State Compute particle gradients through~\eqref{eq:nonlinear_gradient_NdYi}.
    \State Update the parameters according to~\eqref{eq:opt_param_update}, where the step-size $\tau^{(k)}$ is determined by backtracking (Armijo--Goldstein) line search.
    \State Update the predicted kernel $w^{(k+1)}=\sum_{l=1}^L a_l^{(k+1)} b_l(r)$.
    \State Measure the data mismatch $\mathsf{J}^{(k+1)}[w]$ based on~\eqref{eq:particle_inverse_J}.
\EndWhile
\end{algorithmic}
\end{algorithm}

\textbf{Example 1: 1D attractive-repulsive kernel.}
In this example, we set $L=3$ and consider the following basis functions,
\begin{equation}\label{eq:grad_gaussian_basis}
    b_l(r) = c_l r e^{-\frac{|r|}{2}} L^{(2)}_{l-1}(|r|)\,, 
\end{equation}
where $L^{(2)}_l$ is the $l$-th generalized Laguerre polynomial associated with parameter $2$, meaning any two generalized Laguerre polynomials are orthogonal with each other with respect to basis function $x^2e^{-x}$. The constant $c_l$ in~\eqref{eq:grad_gaussian_basis} normalizes the basis, making 
$\int_0^{\infty} b_l(r) b_m(r)\,\d r = \delta_{l,m}$. The ground-truth is set to be $w^\ast(r)=\sum_{l=1}^3 a_l^{\ast} b_l(r)$, with $(a_1^\ast, a_2^\ast, a_3^\ast) = (0.4, 0.5, 0.8)$.

We first examine the property of this interaction kernel in the forward setting. In Figure~\ref{fig:wr_three_basis_forward} we present the ground-truth kernel $w^\ast$ (Figure~\ref{fig:wr_three_basis_forward}(A)). This plot suggests that particles are repulsive to each other when they are close (with distance $r<3$) and attractive when they are far apart (with distance $r>3$). To vividly see this, we plot the dynamics of the density $f(t,x)$ that solves ~\eqref{eq:df_wr_mean_field} in Figure~\ref{fig:wr_three_basis_forward}(B), where the initial condition is set as the superposition of two Gaussian distributions.
These two Gaussians, in time, have widened width, reflecting the repulsion effect for small $r$, while their centers approach each other (dashed red lines), reflecting the attraction effect for big $r$.
\begin{figure}[htbp]
    \centering
    \begin{subfigure}[b]{0.42\textwidth}
         \centering
         \includegraphics[width=\textwidth]{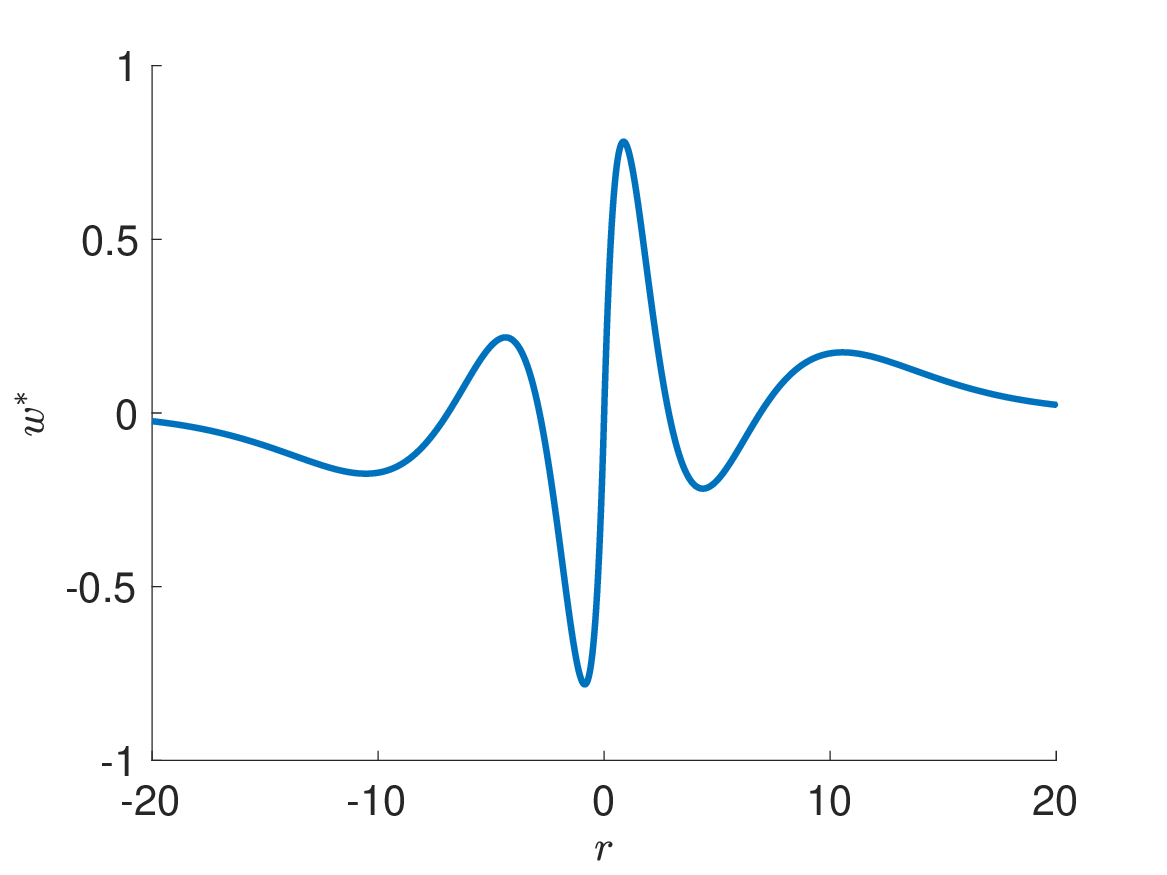}
    \caption{$w^{\ast}(r)$}
    \label{fig:ex2_wr_a123_star}
    \end{subfigure}
    \hfill
    \begin{subfigure}[b]{0.42\textwidth}
         \centering
         \includegraphics[width=\textwidth]{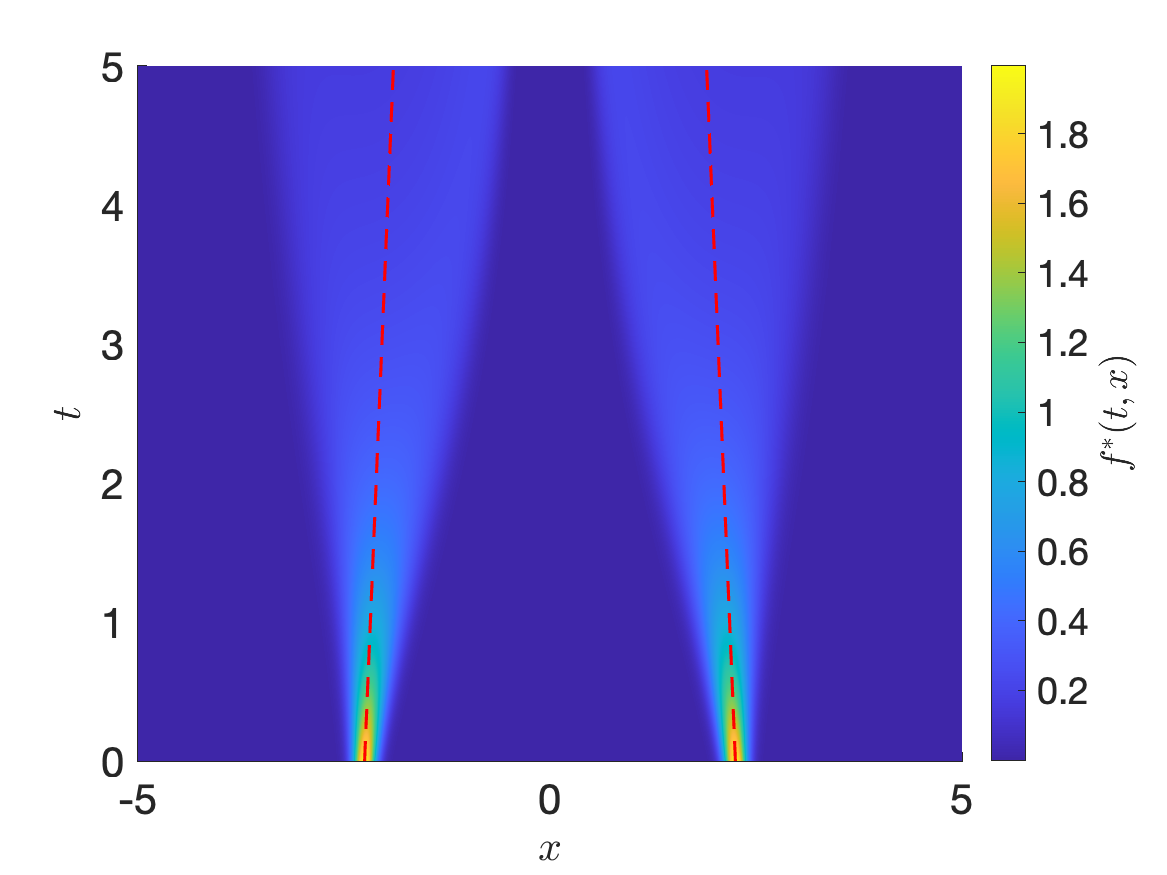}
    \caption{$f(t,x)$}
    \label{fig:ex2_wr_a123_ftx}
    \end{subfigure}
    \caption{Example~1. The two panels show the ground-true kernel $w^*$ and a forward PDE simulation.}
    \label{fig:wr_three_basis_forward}
\end{figure}

To run the inverse problem, we set the measurement function as $\nu(x):=\frac{1}{2}x^2$ so that the objective function captures the second moment. The initial condition of the particle system is the bimodal distribution as follows, 
\begin{equation}\label{eq:X0_bimodal_dist}
    X_{i,0} \overset{\text{i.i.d.}}{\sim} 
    f_0(x) = \frac{1}{2}\frac{1}{\sqrt{0.1\pi}} e^{-\frac{(x+0.5)^2}{0.1}} + \frac{1}{2}\frac{1}{\sqrt{0.1\pi}} e^{-\frac{(x-0.5)^2}{0.1}}\,,
\end{equation}
and is run until $T=0.5$.  We set the initial guess to be $(a_1^{(0)}, a_2^{(0)}, a_3^{(0)}) = (0.2, 0.1, 0.3)$.

The reconstruction results are presented in Figure~\ref{fig:wr_three_basis_reconstruct} and Figure~\ref{fig:wr_three_basis_f_fNhist_Np4000}. Figure~\ref{fig:wr_three_basis_reconstruct}(A) shows the initial guess, and the reconstruction of the interaction kernel at the 4th and final $K=78$ iteration using $N=4000$ particles, against the ground truth. At the last iteration, our reconstruction accurately capture the ground truth. The relative $l^\infty$-error~\eqref{eq:L_infty_error} is evaluated at each iteration and its evolution is plotted in Figure~\ref{fig:wr_three_basis_reconstruct}(B). The error drops by $10$-fold within $100$ iterations, showing fast convergence.

\begin{figure}[htbp]
    \centering
    \begin{subfigure}[b]{0.42\textwidth}
         \centering
         \includegraphics[width=\textwidth]{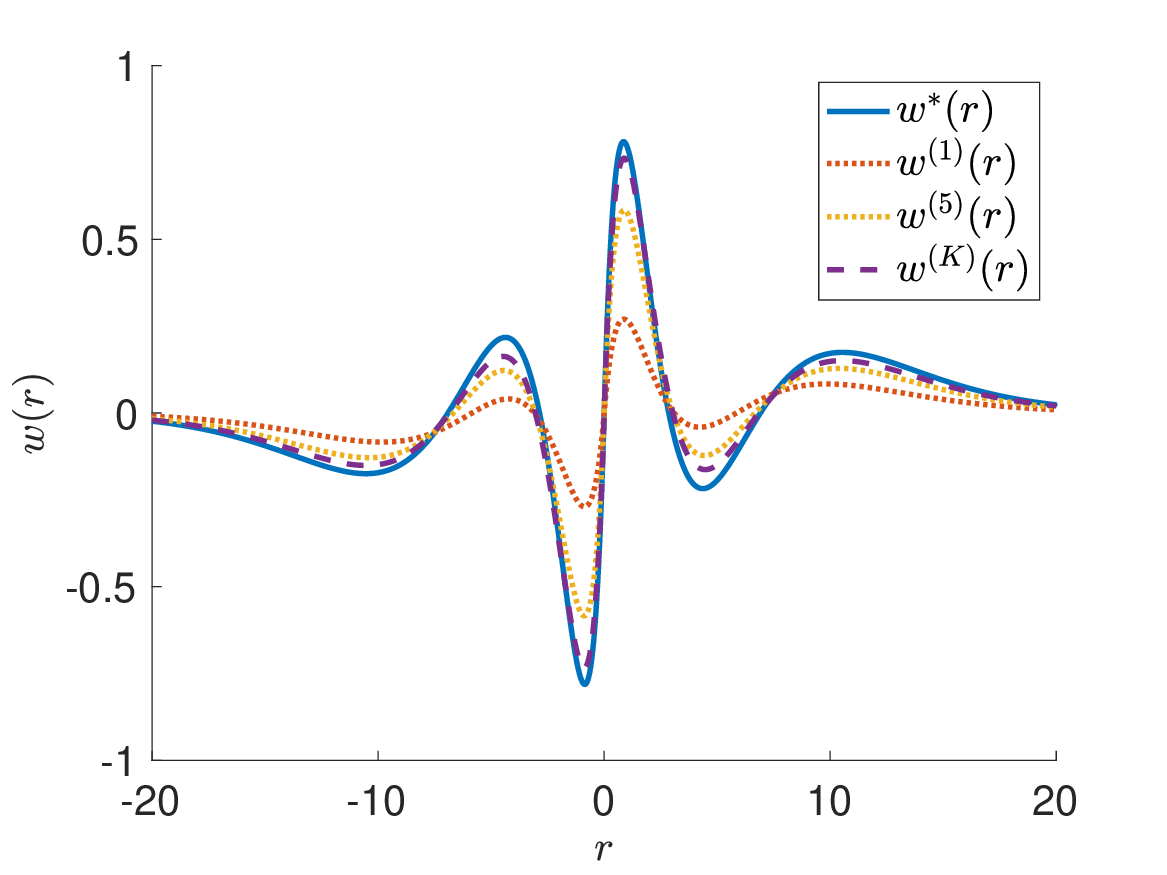}
    \caption{$w^{(k)}(r)$}
    \label{fig:ex2_wr_a123_N4000}
    \end{subfigure}
    \hfill
    \begin{subfigure}[b]{0.42\textwidth}
         \centering
         \includegraphics[width=\textwidth]{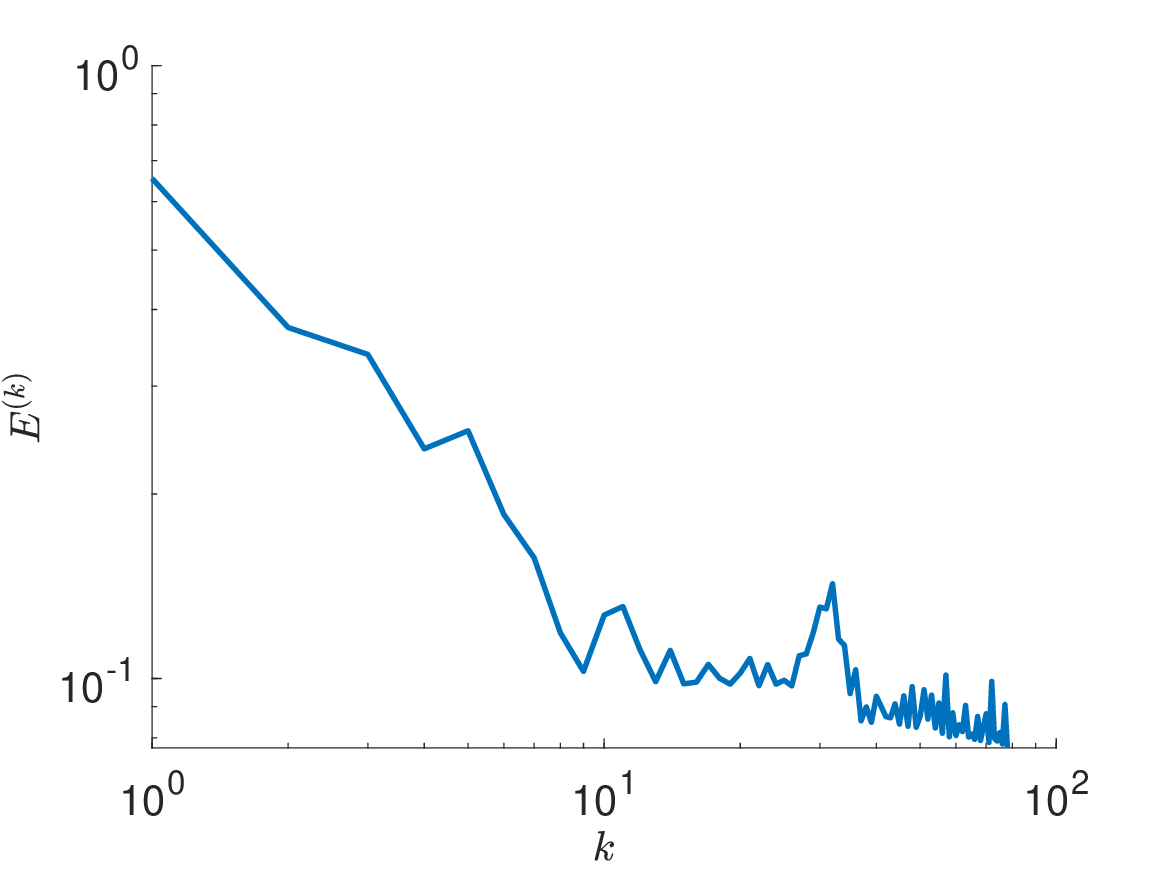}
    \caption{Error, $N=4000$}
    \label{fig:ex2_wr_a123_Err}
    \end{subfigure}
    \caption{Example~1. (A) shows reconstructed $w(r)$ at three different iterations, and (B) shows the decay of error in optimization measured by $E^{(k)}$ following~\eqref{eq:L_infty_error}.}
    \label{fig:wr_three_basis_reconstruct}
\end{figure}

Figure~\ref{fig:wr_three_basis_f_fNhist_Np4000} demonstrates the success of the reconstruction. The 2D surface maps of the ground-truth distribution $f^\ast(t,x)$ (see Figure~\ref{fig:wr_three_basis_f_fNhist_Np4000}(A)), and differences between the ground-truth and the distributions generated by $N$ particles (histogram) at the initial and the final $K=78$-th iteration respectively. That is, the initial error of distributions, $f_N^{(0)}(t,x) - f^\ast(t,x)$ is shown in Figure~\ref{fig:wr_three_basis_f_fNhist_Np4000}(B) and the error at the last iteration, $f_N^{(K)}(t,x) - f^\ast(t,x)$ is shown in Figure~\ref{fig:wr_three_basis_f_fNhist_Np4000}(C).
It is evident that the distribution obtained by the final reconstruction, in comparison to the distribution from the initial guess of $w$, better captures the true dynamics of the density.

\begin{figure}[htbp]
    \centering
    \begin{subfigure}[b]{0.32\textwidth}
         \centering
         \includegraphics[width=\textwidth]{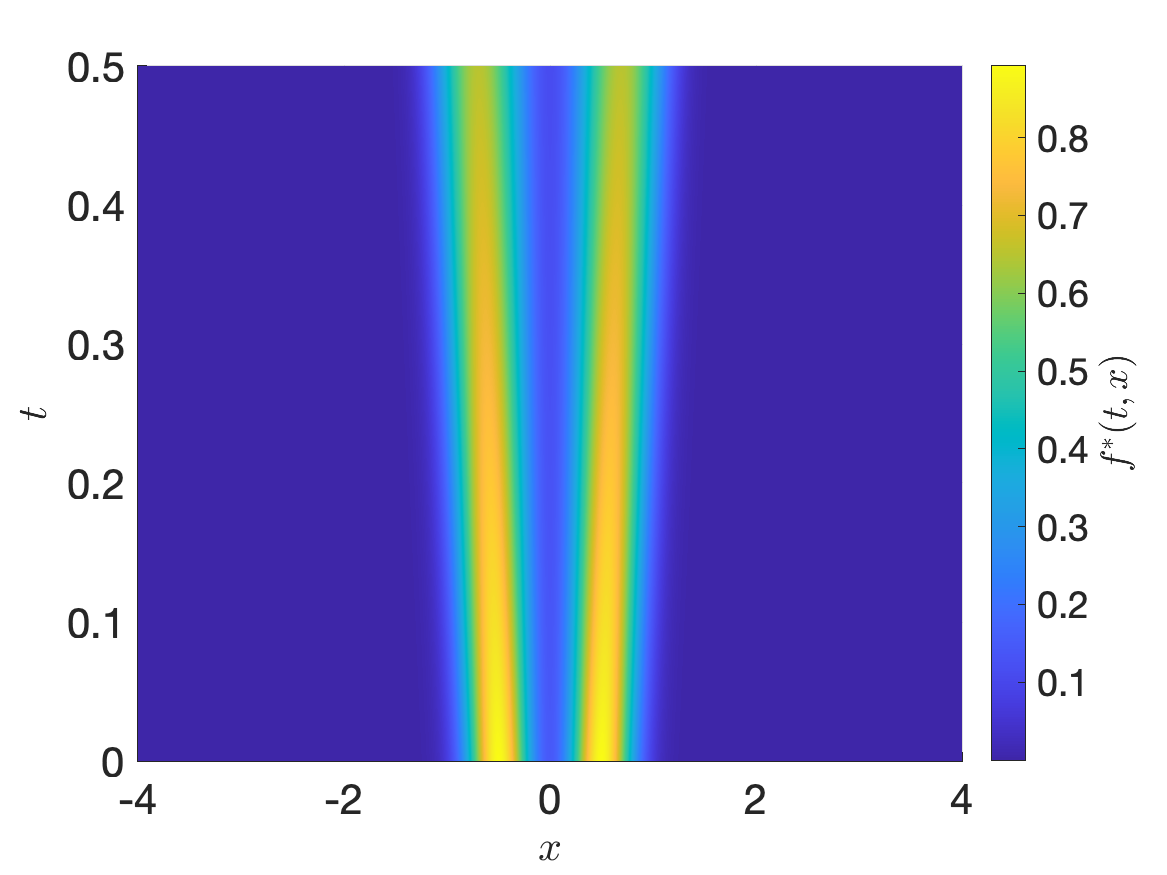}
    \caption{$f^\ast(t,x)$}
    \label{fig:wr_three_basis_kernel_fhist_wstar}
    \end{subfigure}
    \hfill
    \begin{subfigure}[b]{0.32\textwidth}
         \centering
         \includegraphics[width=\textwidth]{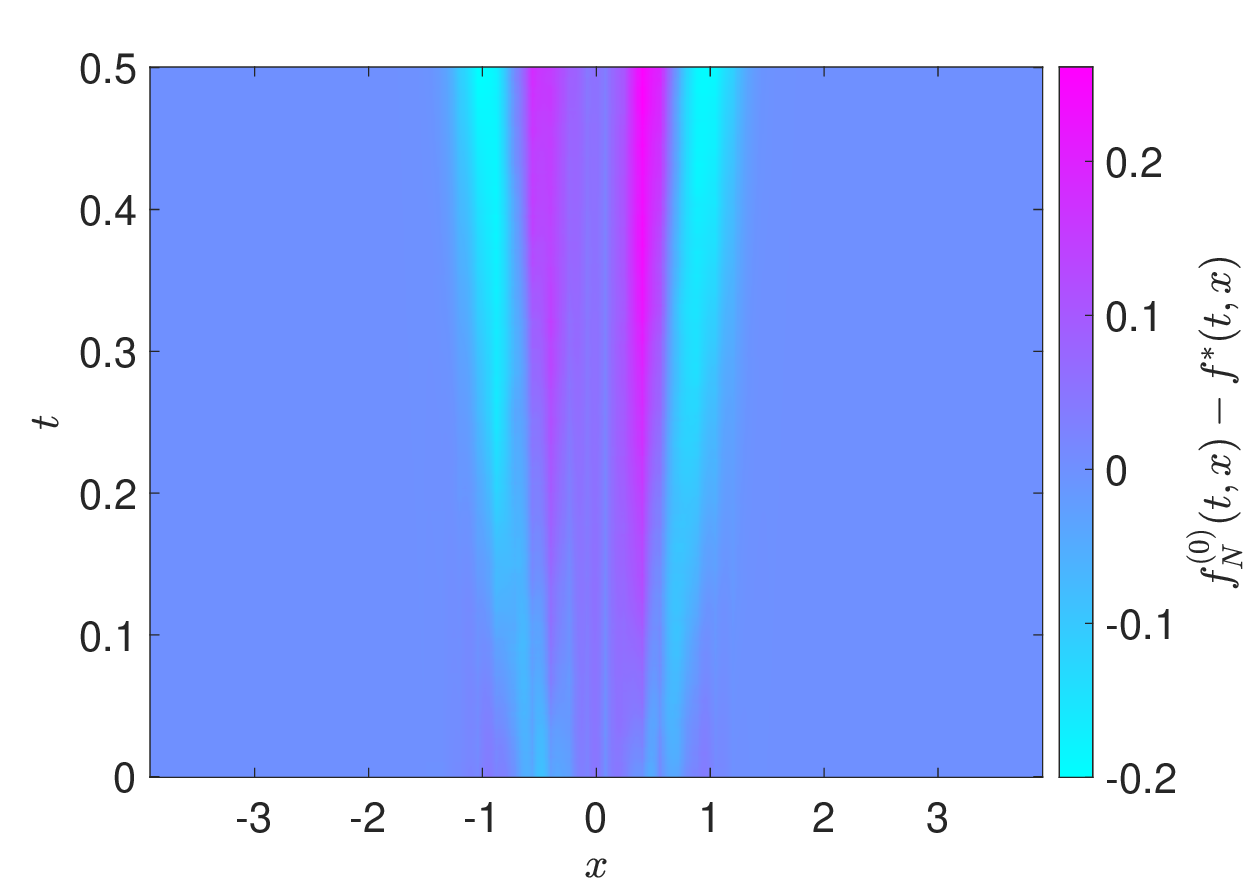}
    \caption{$f_N^{(0)}(t,x)-f^\ast(t,x)$}
    \label{fig:wr_three_basis_kernel_fhist_wK}
    \end{subfigure}
    \hfill
    \begin{subfigure}[b]{0.32\textwidth}
         \centering
         \includegraphics[width=\textwidth]{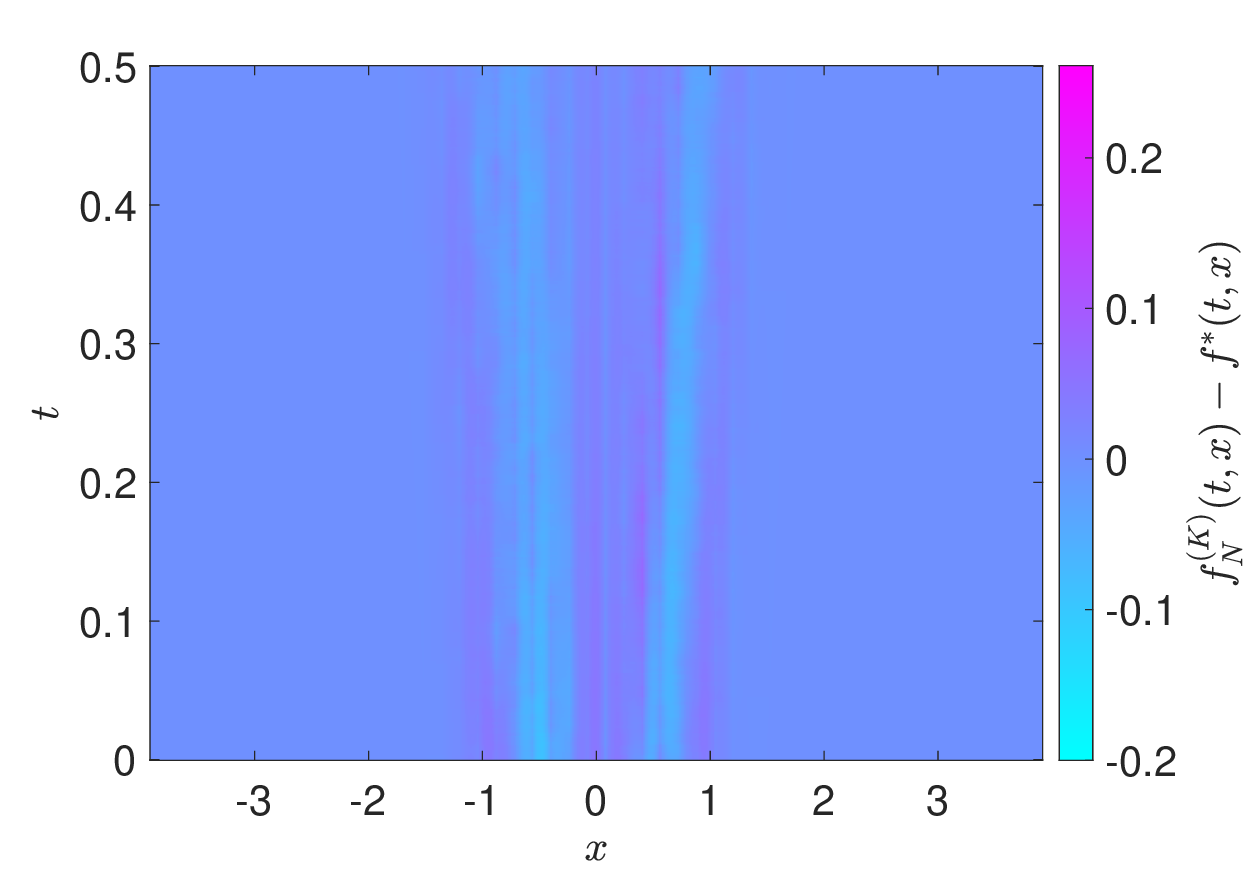}
    \caption{$f_N^{(K)}(t,x)-f^\ast(t,x)$}
    \label{fig:wr_three_basis_kernel_fNhist_wK}
    \end{subfigure}
    \caption{Example~1: The three panels present the ground-true distribution evolution in time, error between the guessed and the true distributions at the initial iteration, and error between the reconstructed and the true distributions at $K=0$ (initial) and $K=78$-th iteration using a unified colormap.}
    \label{fig:wr_three_basis_f_fNhist_Np4000}
\end{figure}

\medskip
\textbf{Example 2: 2D attractive-repulsive interaction.}
The second example regards an inverse problem posed over 2D: $w(r): r\in \R^2 \to \R^2$. The following basis functions are used
\begin{equation*}
b_i(r) = (-1)^i \frac{1}{2\pi\theta_i^2} r e^{-\frac{|r|^2}{2\theta_i^2}} \,\quad\text{with}\quad \theta_1 = 0.25\,,\theta_2 = 1\,,
\end{equation*}
where the sign of the basis represent attractive or repulsive interaction respectively. We set the ground-truth parameter to be $(a_1^\ast, a_2^\ast)=(1.5,0.8)$ so that we have a strong near-field attraction and a weak far-field repulsion. Note that each basis is a vector function of two components. We plot the ground-true kernel, the interaction potential $\phi^*(r)=(-1)^{i-1} \frac{1}{2\pi} e^{-\frac{|r|^2}{2\theta_i^2}}$  where $w^\ast(r)=:\nabla_r \phi^\ast(r)$ as the potential function, and the dynamics of the distribution function at different times in Figure~\ref{fig:wstar_2d_forward}, where we used the initial data
\begin{equation*}
f_0=\frac{1}{2}\frac{1}{2\pi\theta}e^{-\frac{(x_2-0.425)^2}{2\theta}}\left(e^{-\frac{(x_1-0.5)^2}{2\theta}} + e^{-\frac{(x_1+0.5)^2}{2\theta}}\right)\,, \quad \theta = 0.05\,, 
\end{equation*}
and the final time is set $T=4$.
It can be clearly observed that the two Gaussian distributions concentrate to have narrower width, as a response to the attractive near-field potential, with the centers depart from each other as time evolves, as a response to the repulsive far-field potential.

\begin{figure}[htbp]
    \centering
    \begin{subfigure}[b]{0.32\textwidth}
         \centering
         \includegraphics[width=\textwidth]{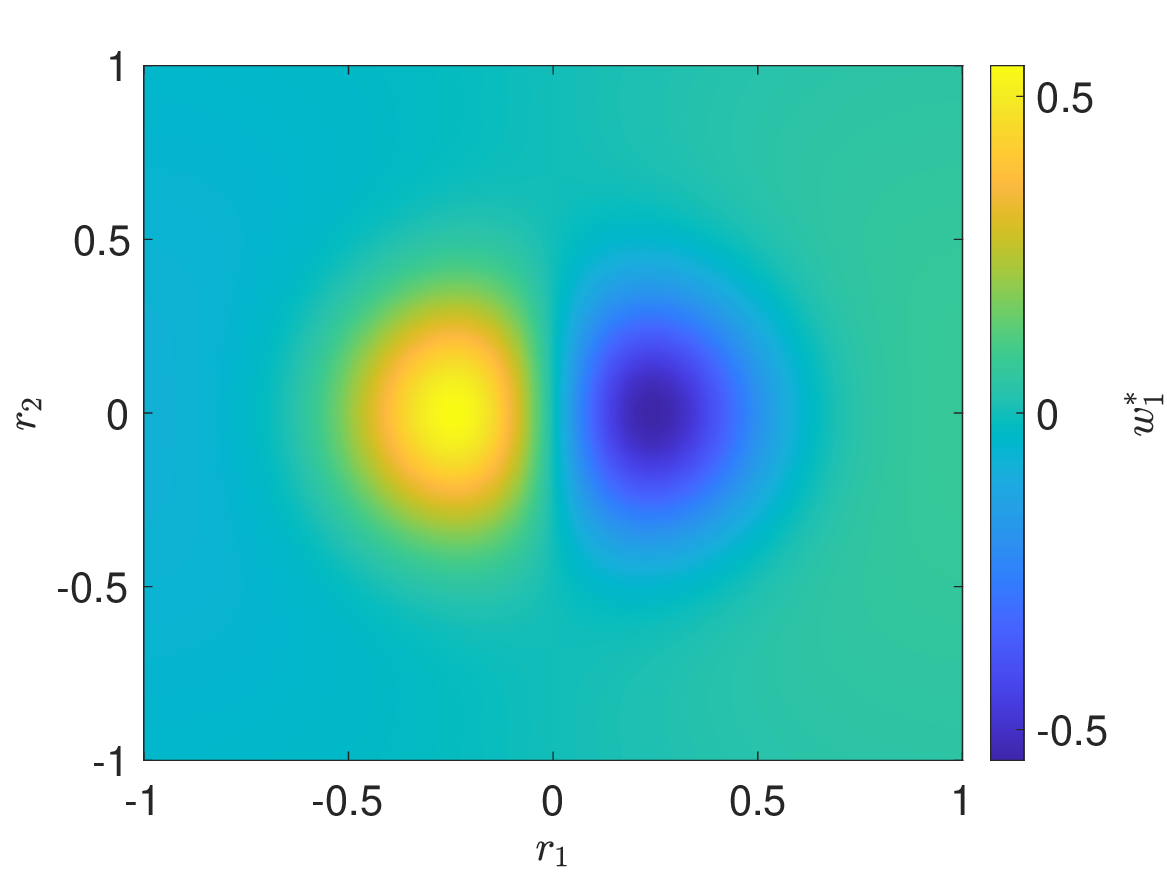}
    \caption{$w^{\ast}_{1}(r)$}
    \label{fig:wx_2d_star}
    \end{subfigure}
    \hfill
    \begin{subfigure}[b]{0.32\textwidth}
         \centering
         \includegraphics[width=\textwidth]{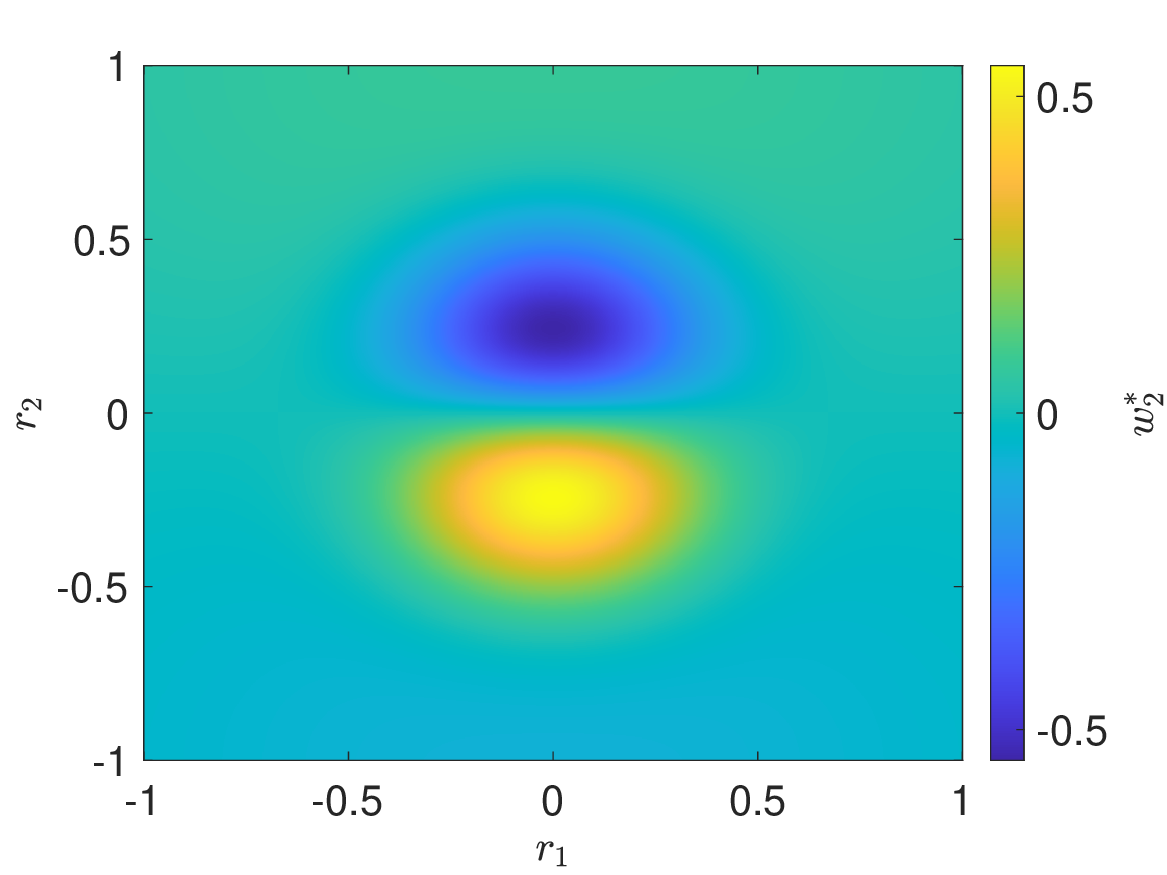}
    \caption{$w^{\ast}_{2}(r)$}
    \label{fig:wy_2d_star}
    \end{subfigure}
    \begin{subfigure}[b]{0.32\textwidth}
         \centering
         \includegraphics[width=\textwidth]{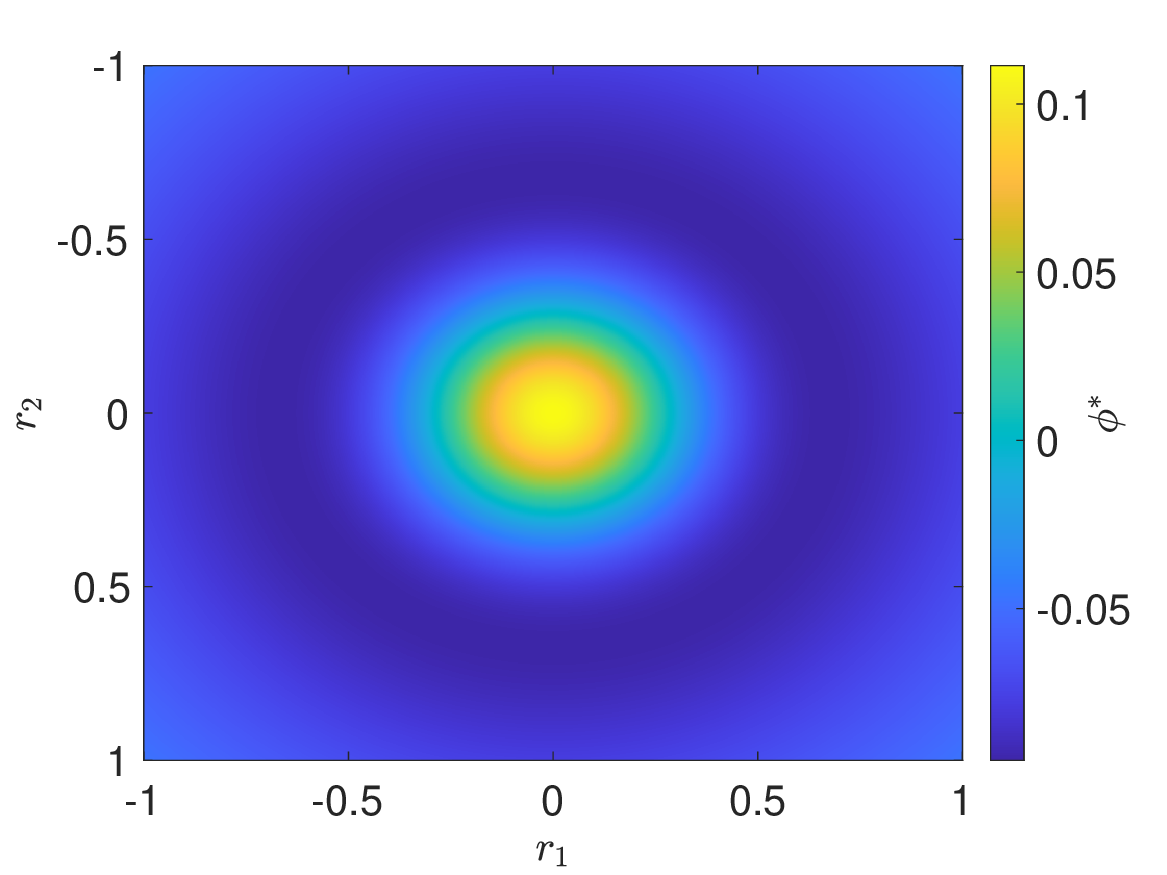}
    \caption{$\phi^\ast(r)$}
    \label{fig:phir_2d_potential}
    \end{subfigure}
    \\
    \begin{subfigure}[b]{0.32\textwidth}
         \centering
         \includegraphics[width=\textwidth]{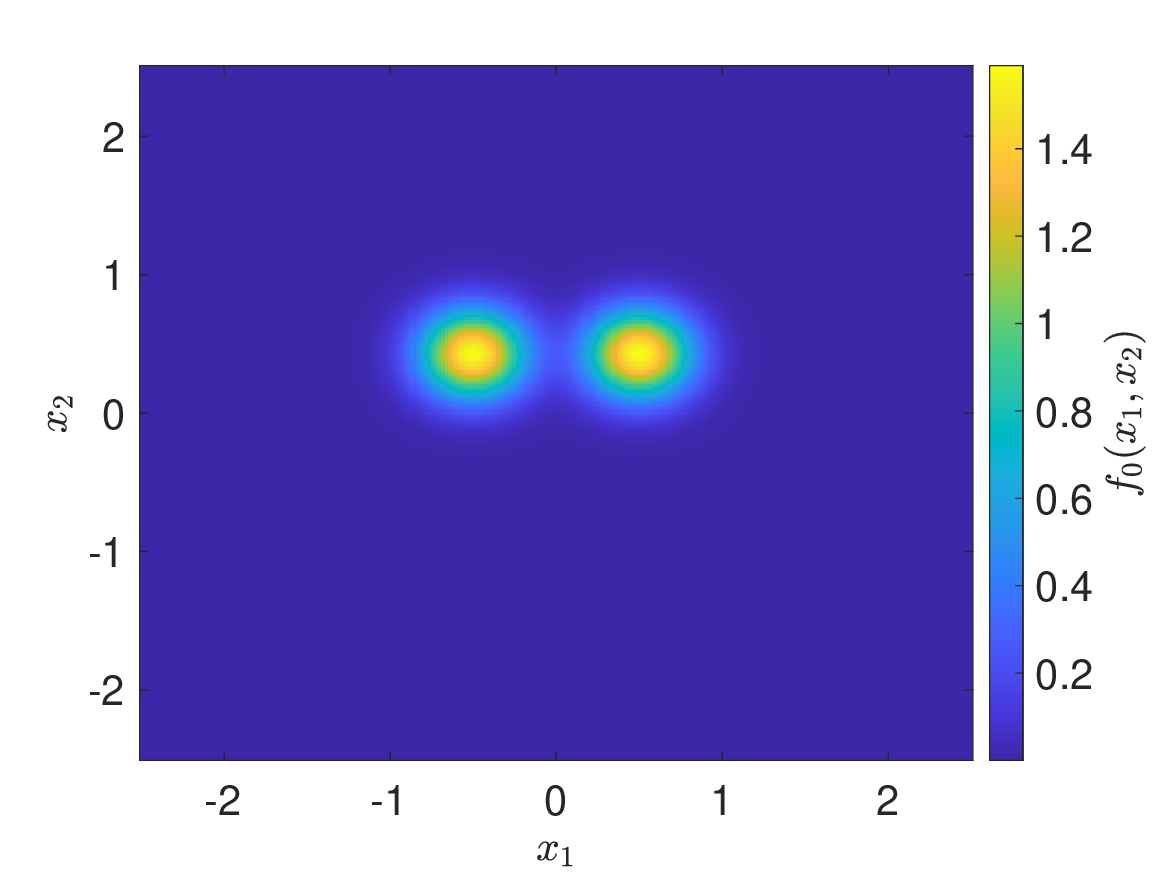}
    \caption{$f_0$}
    \label{fig:wr_2d_star_f0}
    \end{subfigure}
    \hfill
    \begin{subfigure}[b]{0.32\textwidth}
         \centering
         \includegraphics[width=\textwidth]{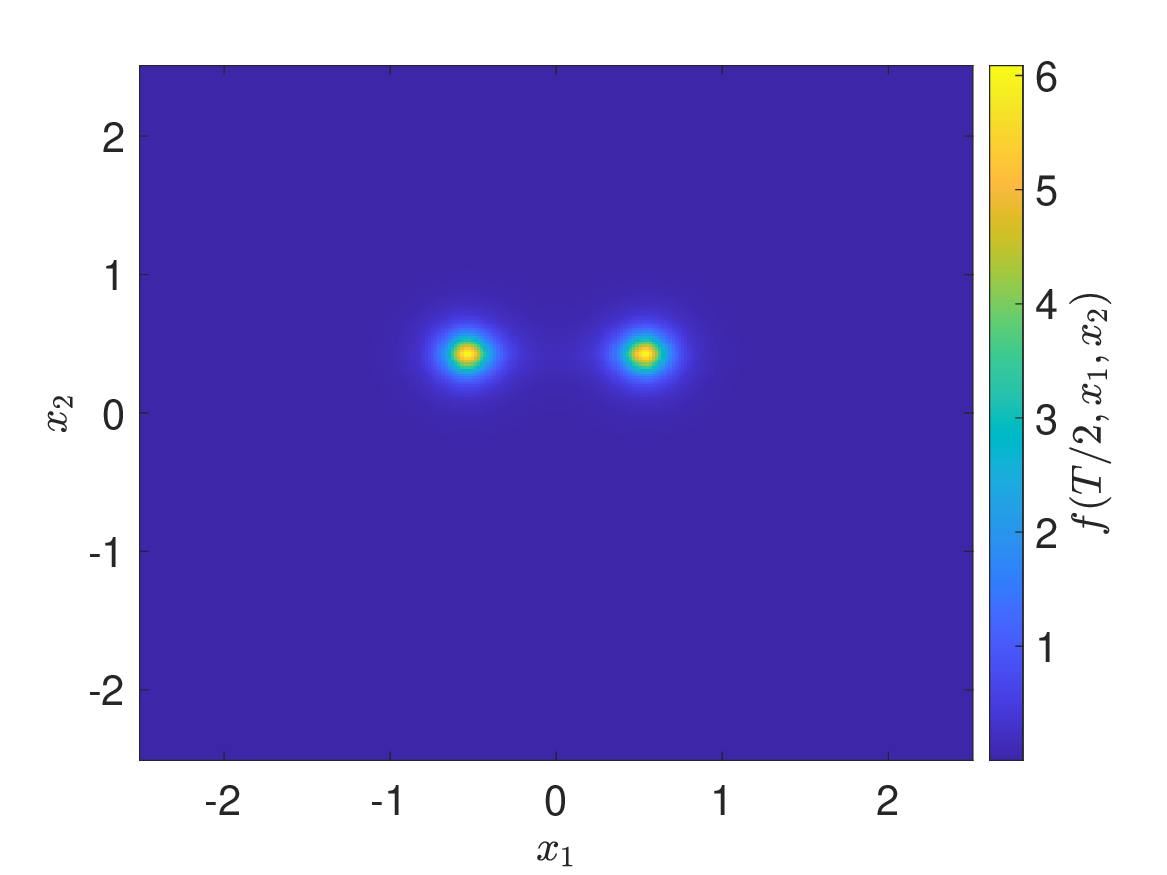}
    \caption{$f(T/2,x_1,x_2)$}
    \label{fig:wr_2d_star_f_halfT}
    \end{subfigure}
    \hfill
    \begin{subfigure}[b]{0.32\textwidth}
         \centering
         \includegraphics[width=\textwidth]{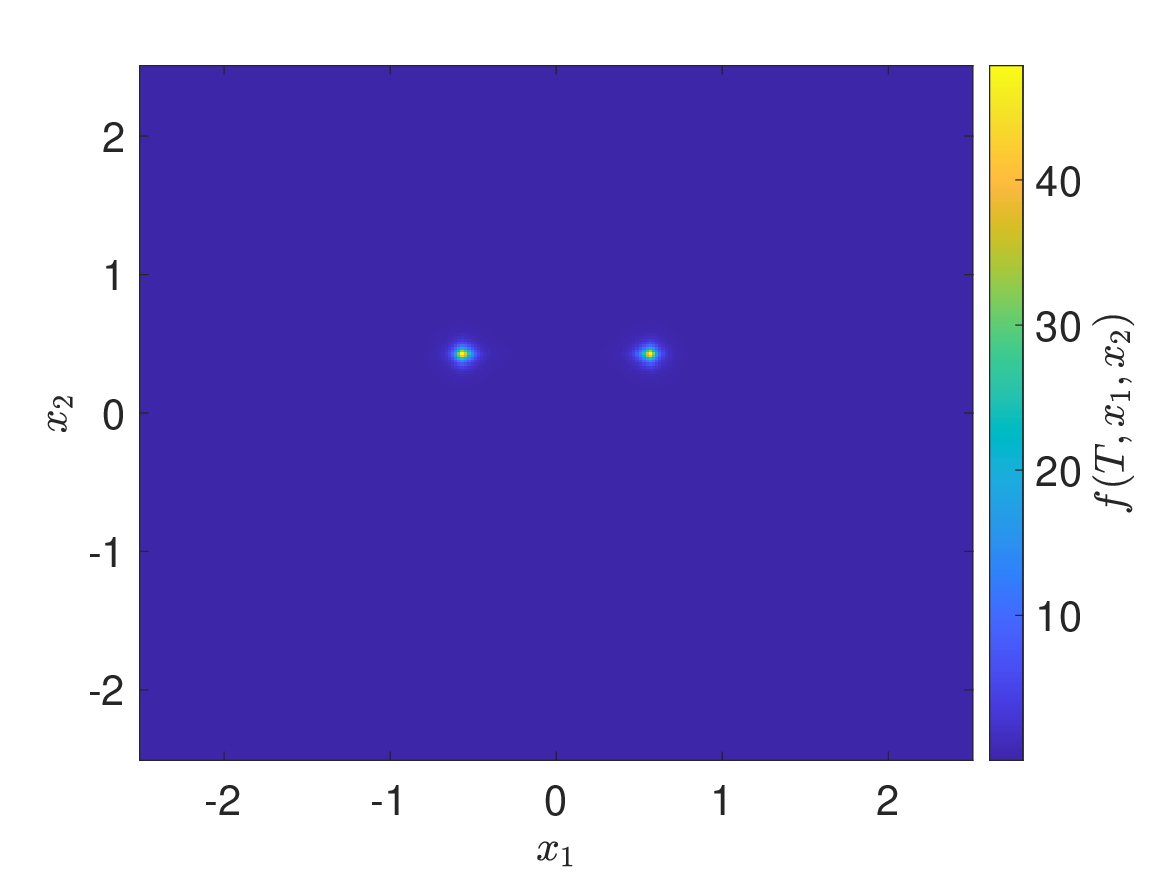}
    \caption{$f(T,x_1,x_2)$}
    \label{fig:wr_2d_star_f_T}
    \end{subfigure}
    \caption{Example~2: (A)-(B) present the ground-truth $w^\ast$ and (C) shows the ground-truth potential $\phi^*$ that induces $w^\ast$. (D)-(F) presents three time snapshots of the forward PDE simulation at $t=0$, $t=T/2$ and $t=T$, respectively.}
    \label{fig:wstar_2d_forward}
\end{figure}

In the inverse problem setting, we set the initial data as $f_0(x_1,x_2) := \phi_{\eps}\ast \mathbf{1}_{[-0.5, 0.5]\times[-0.5,0.5]}$, with $\phi_{\eps}$ being a Gaussian mollifier in $2D$, and the measurement to be the second moment $\nu(r) = \frac{1}{2}|r|^2$, measured at $T=1$. The reconstruction is performed using $N=15000$ particles drawn initially from $f_0$ through rejection sampling. The initial guess for the parameters is $(a_1^{(0)}, a_2^{(0)})=(2,0.4)$.

In Figure~\ref{fig:wr_2d_reconstruct} we compare the ground-true kernel with the reconstructed solution obtained at  various iterations, ranging from $k=0$ to $k=80$. The relative error is shown in the right three columns, with two rows showing results for the two components of $w=:(w_1, w_2)^\top$, $w_1$ and $w_2$ respectively. In Figure~\ref{fig:wr_2d_loss} we show one slice of $w_1$ at different iterations, and the decay of the relative $l^\infty$-error~\eqref{eq:L_infty_error}.

In Figure~\ref{fig:wr_2d_f_fN_dist}, we show the reference continuous density obtained using the ground-truth kernel (panels (A)-(C)), together with histograms of the empirical measure generated by the particle system~\eqref{eq:dX_couple_w} using the reconstructed kernel. The initial guess $w^{(0)}$ (panels (D)-(E)) fails to reproduce the reference density; in particular, the corresponding empirical density $f_N^{(0)}$ is significantly more concentrated near the origin at the final time $T$. After optimization, the reconstructed kernel $w^{(K)}$ yields a density (panels (F)-(G)) that closely matches the ground-truth solution.


\begin{figure}[htbp]
    \centering
    \begin{subfigure}[b]{0.24\textwidth}
         \centering
         \includegraphics[width=\textwidth]{fig_2D_wxstar_bird.eps}
    \caption{$w^{\ast}_{1}(r)$}
    \label{fig:wx_star_2d_N15k}
    \end{subfigure}
    \hfill
    \begin{subfigure}[b]{0.24\textwidth}
         \centering
         \includegraphics[width=\textwidth]{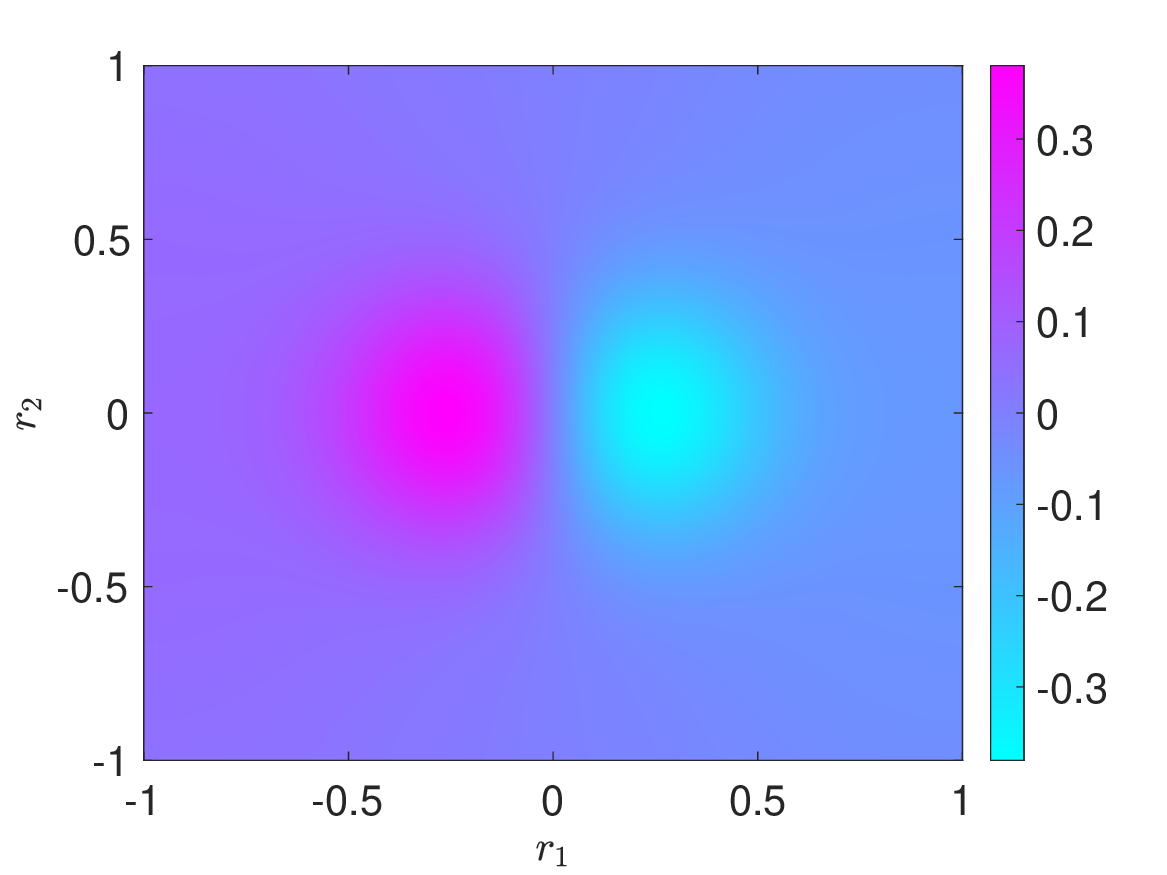}
    \caption{$(w^{(0)}_{1}-w^{\ast}_{1})/\|w^{\ast}\|_{l^\infty}$}
    \label{fig:wx_in_N15k}
    \end{subfigure}
    \hfill
    \begin{subfigure}[b]{0.24\textwidth}
         \centering
         \includegraphics[width=\textwidth]{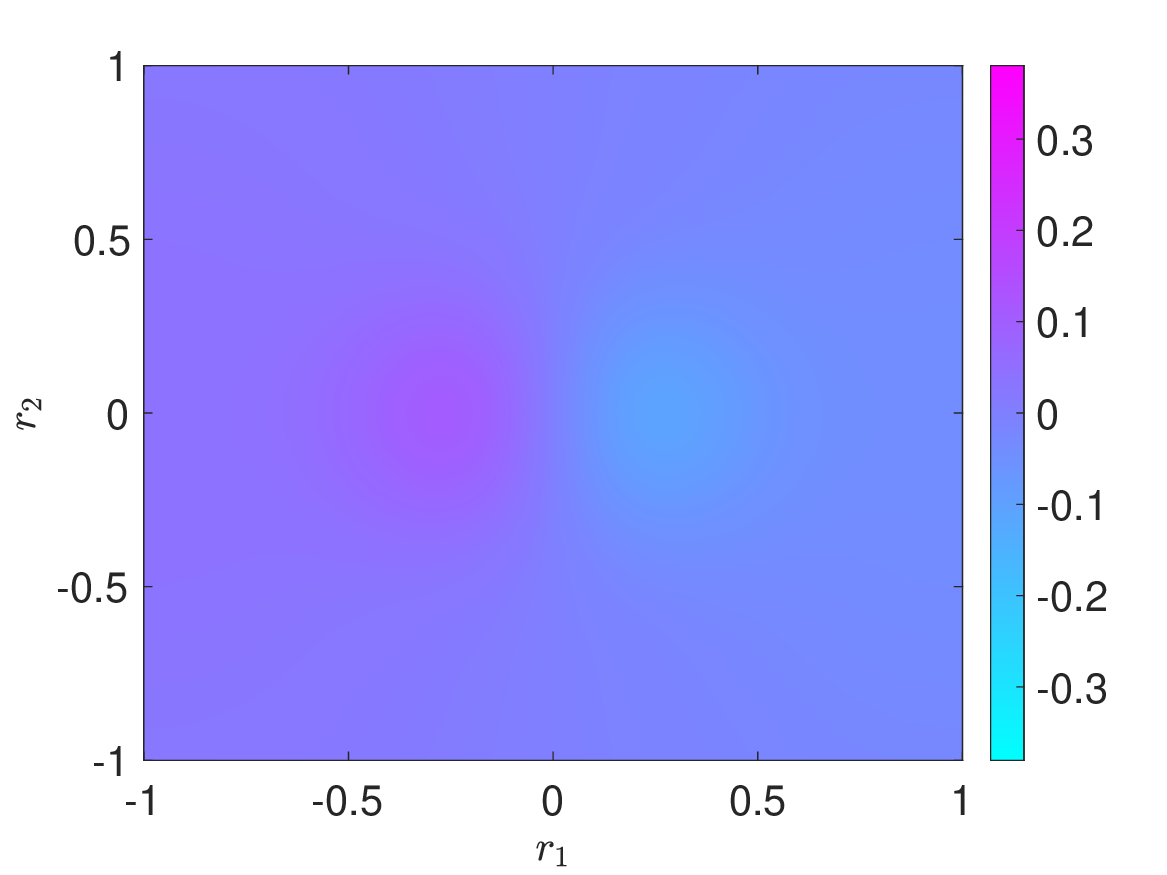}
    \caption{$(w^{(5)}_{1}-w^{\ast}_{1})/\|w^{\ast}\|_{l^\infty}$}
    \label{fig:wx_iter5_N15k}
    \end{subfigure}
    \hfill
    \begin{subfigure}[b]{0.24\textwidth}
         \centering
         \includegraphics[width=\textwidth]{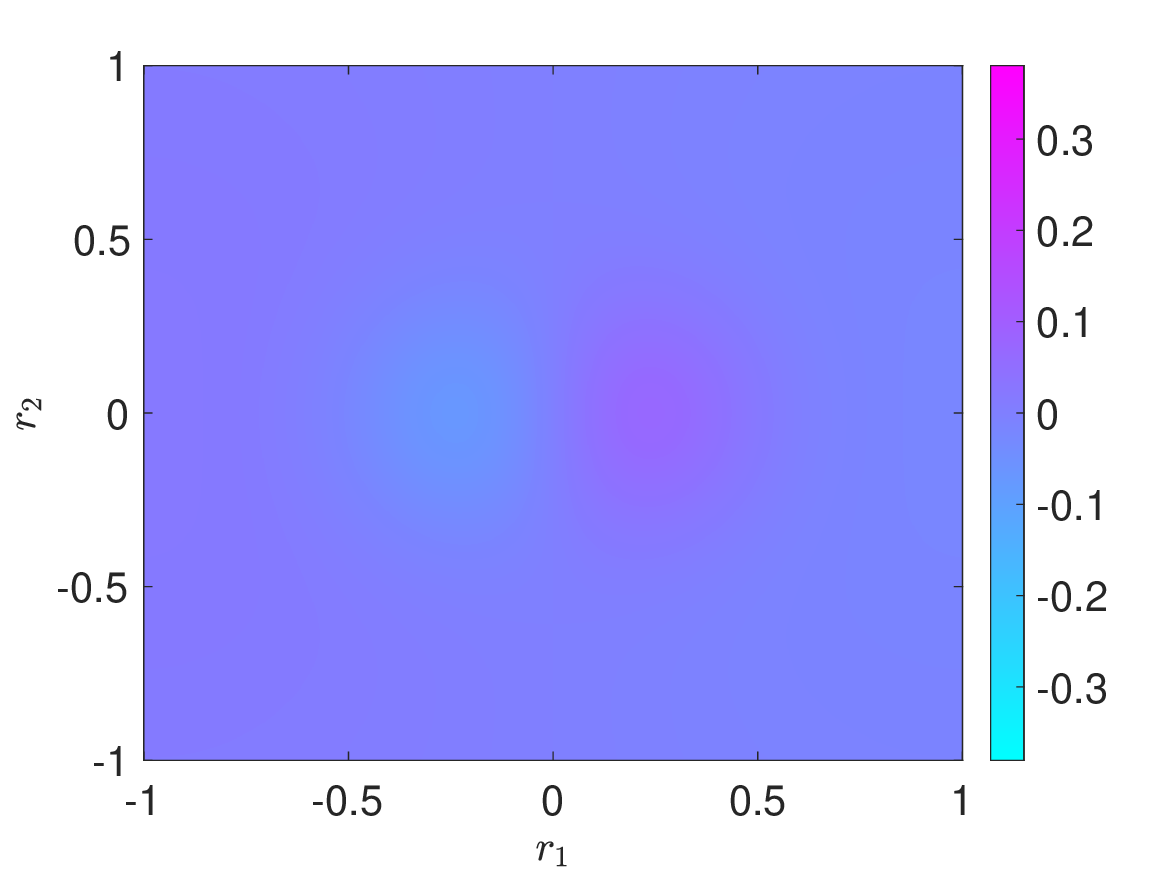}
    \caption{$(w^{(K)}_{1}-w^{\ast}_{1})/\|w^{\ast}\|_{l^\infty}$}
    \label{fig:wx_K_2d_N15k}
    \end{subfigure}
    \\
    \begin{subfigure}[b]{0.24\textwidth}
         \centering
         \includegraphics[width=\textwidth]{fig_2D_wystar_bird.eps}
    \caption{$w^{\ast}_{2}(r)$}
    \label{fig:wy_star_2d_N15k}
    \end{subfigure}
    \hfill
    \begin{subfigure}[b]{0.24\textwidth}
         \centering
         \includegraphics[width=\textwidth]{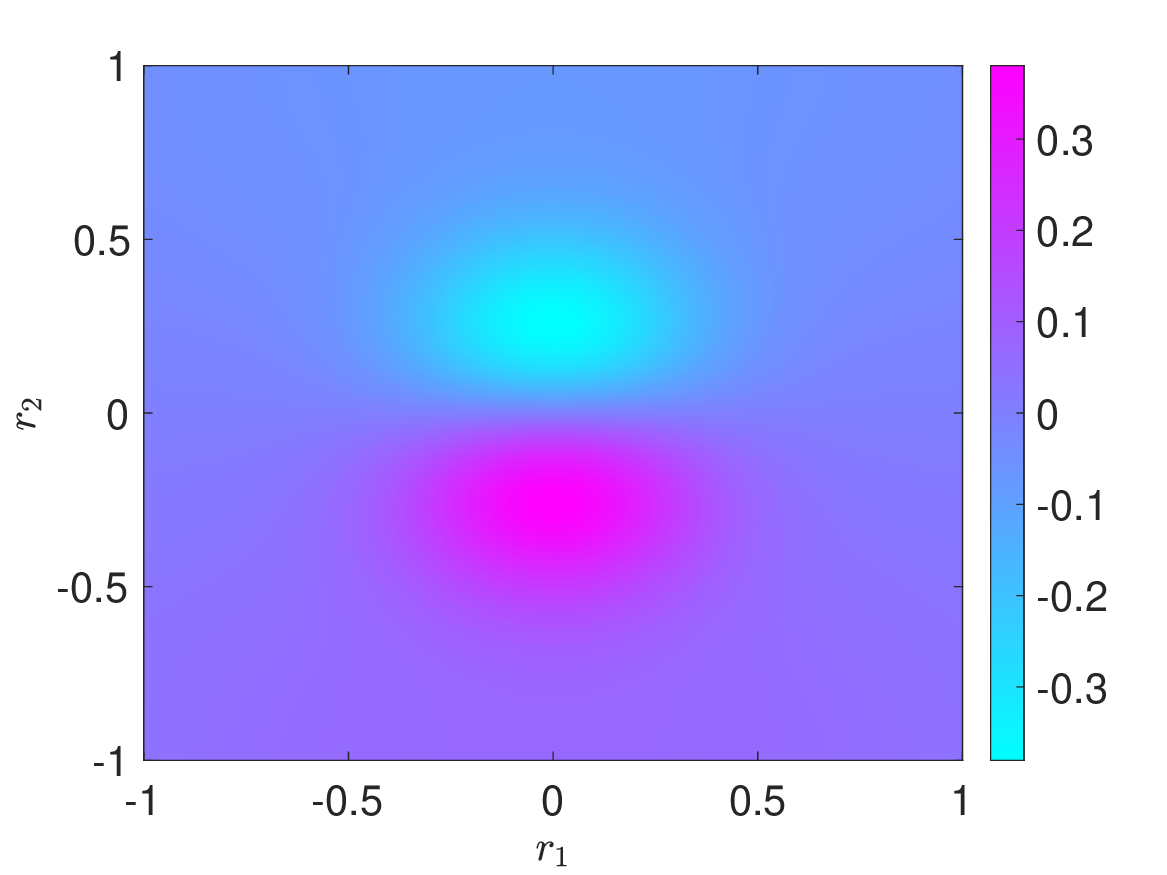}
    \caption{$(w^{(0)}_{2}-w^{\ast}_{2})/\|w^{\ast}\|_{l^\infty}$}
    \label{fig:wy_in_2d_N15k}
    \end{subfigure}
    \hfill
    \begin{subfigure}[b]{0.24\textwidth}
         \centering
         \includegraphics[width=\textwidth]{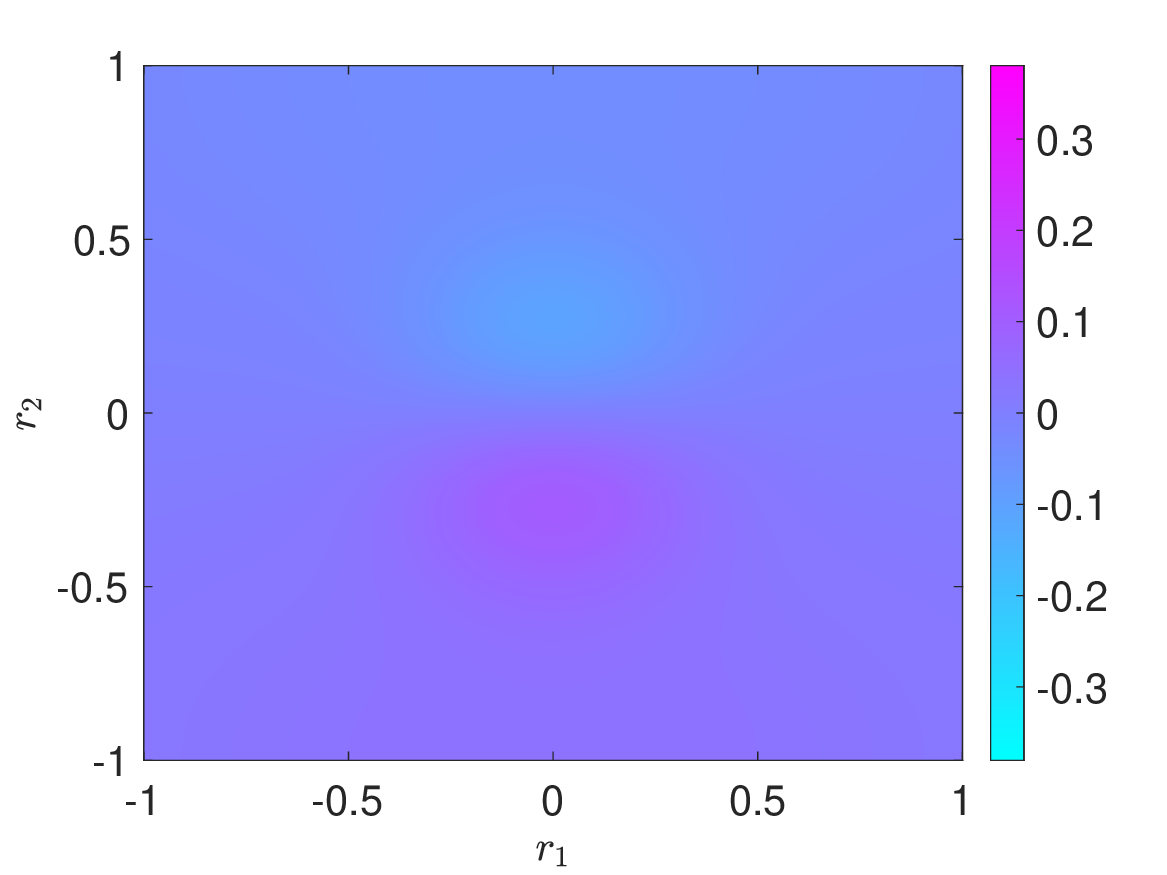}
    \caption{$(w^{(5)}_{2}-w^{\ast}_{2})/\|w^{\ast}\|_{l^\infty}$}
    \label{fig:wy_iter5_2d_N15k}
    \end{subfigure}
    \hfill
    \begin{subfigure}[b]{0.24\textwidth}
         \centering
         \includegraphics[width=\textwidth]{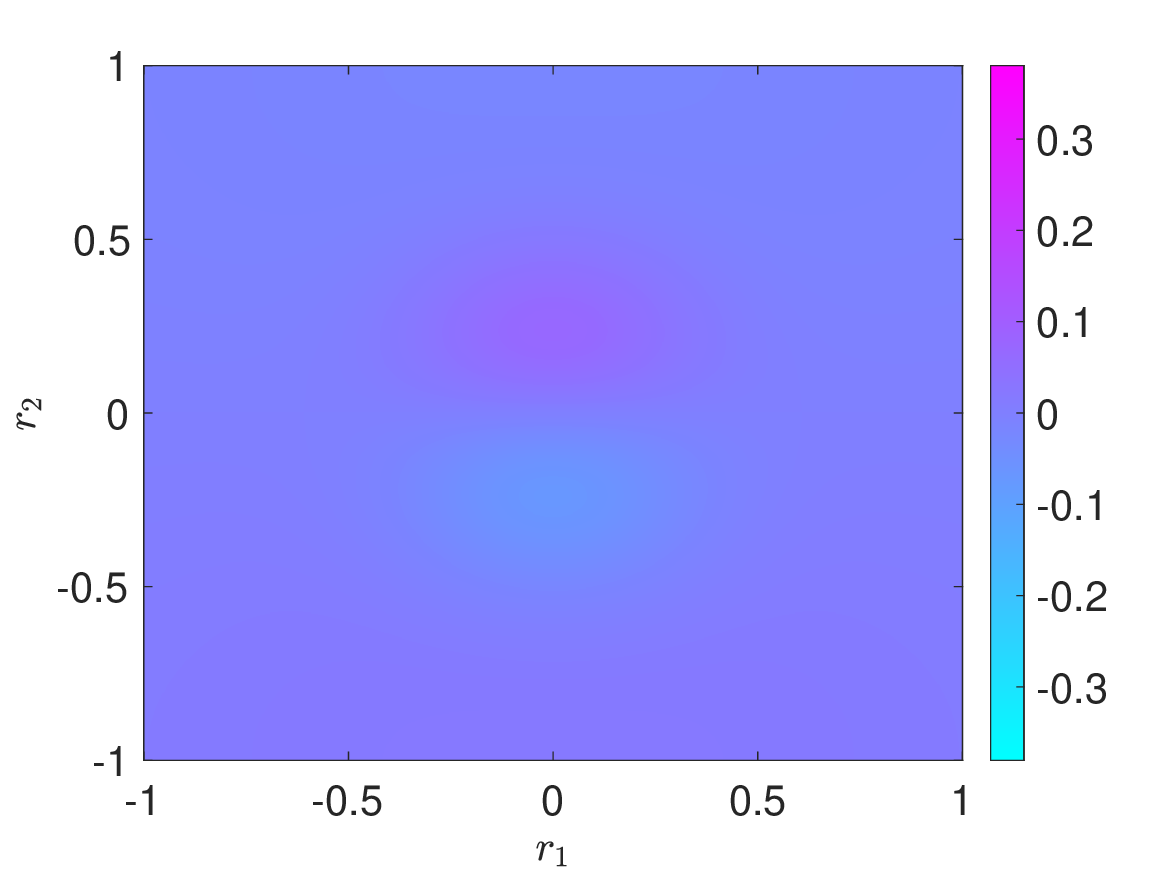}
    \caption{$(w^{(K)}_{2}-w^{\ast}_{2})/\|w^{\ast}\|_{l^\infty}$}
    \label{fig:wy_K_2d_N15k}
    \end{subfigure}
    \caption{Example~2: The first column shows the ground-truth kernel, while the next three columns display the error of the reconstructed kernel $w$ at three optimization iterations: the first, the 5th, and the final iteration $K=80$, all shown using a unified colormap.}
    \label{fig:wr_2d_reconstruct}
\end{figure}

\begin{figure}[htbp]
    \centering
    \begin{subfigure}[b]{0.42\textwidth}
         \centering
         \includegraphics[width=\textwidth]{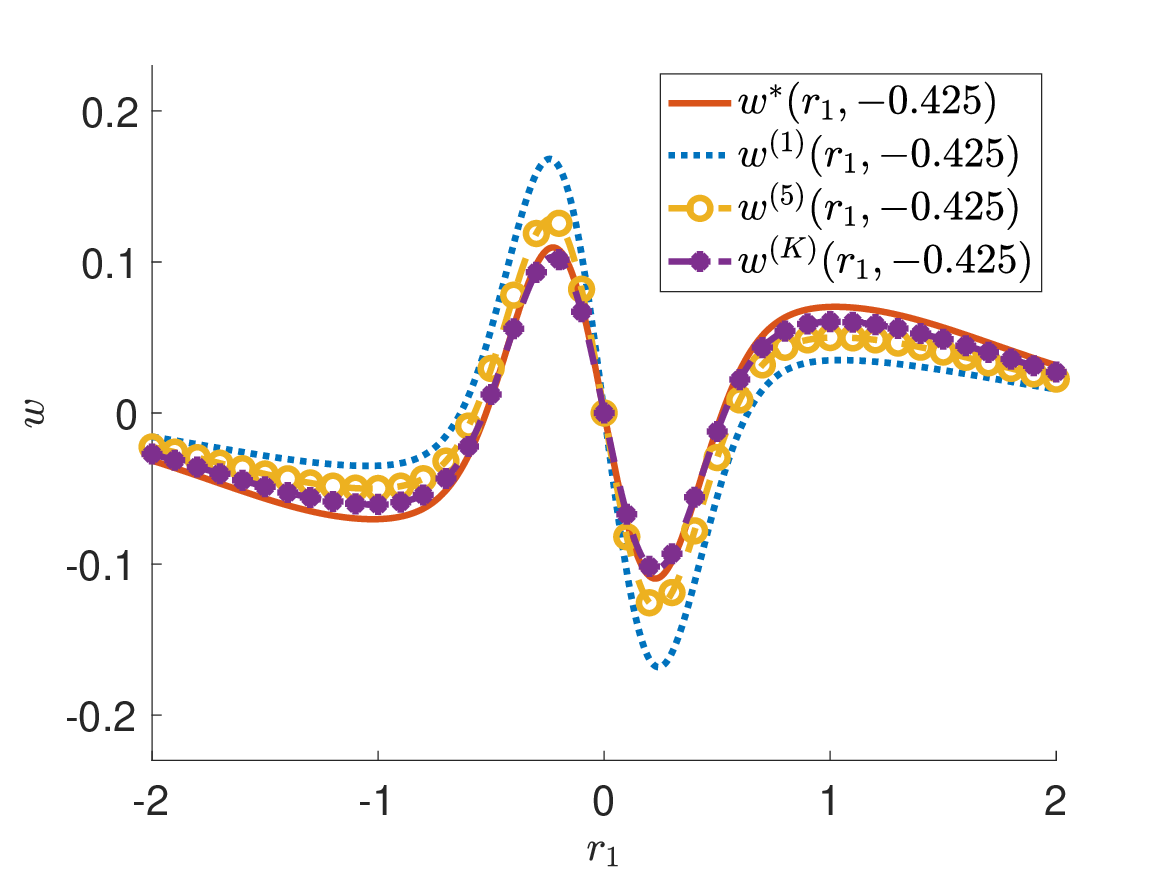}
    \caption{$w_1^{(k)}$}
    \label{fig:wr_2d_fixx1_iter_N15k}
    \end{subfigure}
    \hfill
    \begin{subfigure}[b]{0.42\textwidth}
         \centering
         \includegraphics[width=\textwidth]{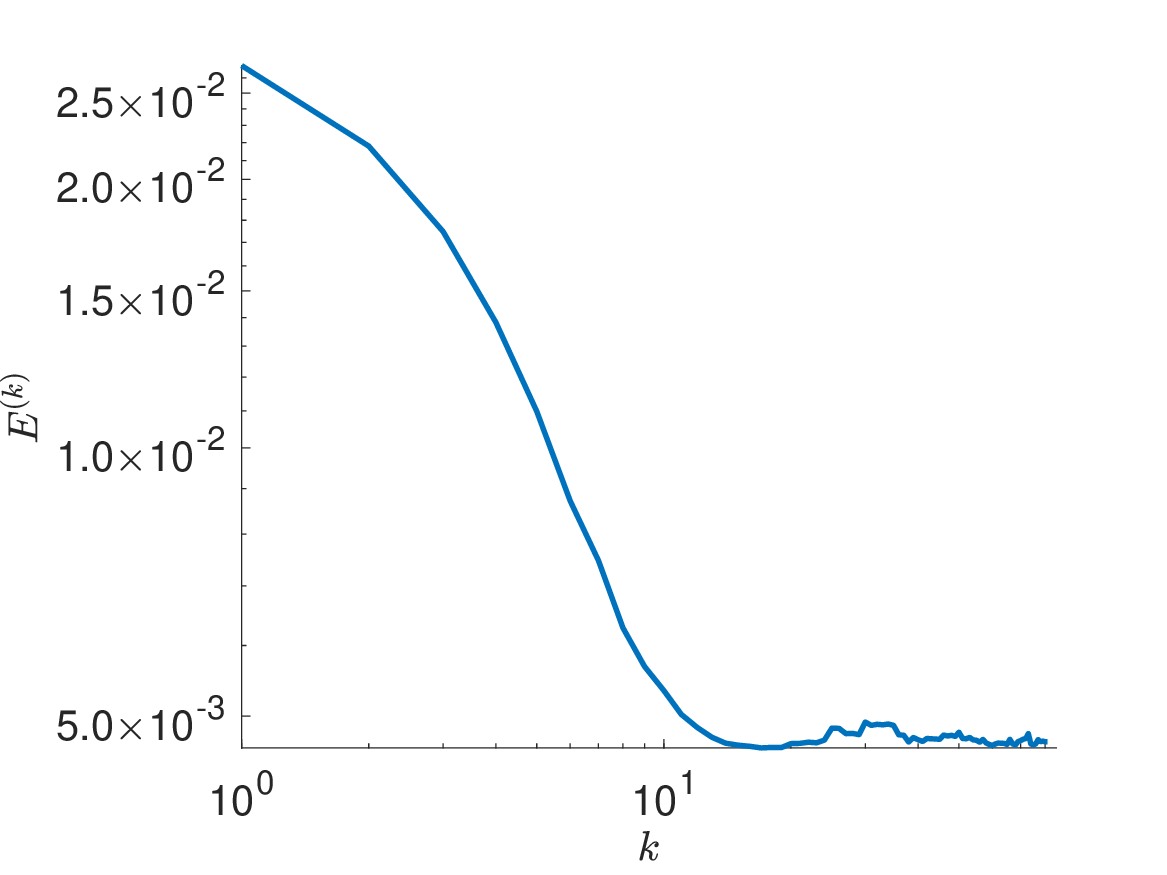}
    \caption{Relative $l^\infty$-error}
    \label{fig:wr_2d_Ek}
    \end{subfigure}
    \caption{Example~2: (A) presents a slice of the reconstructed $w_1$ ($x_2=-0.425$) at three optimization iterations, and (B) shows the decay of error along optimization iteration.}
    \label{fig:wr_2d_loss}
\end{figure}

\begin{figure}[htbp]
    \centering
    \begin{subfigure}[b]{0.32\textwidth}
         \centering
         \includegraphics[width=\textwidth]{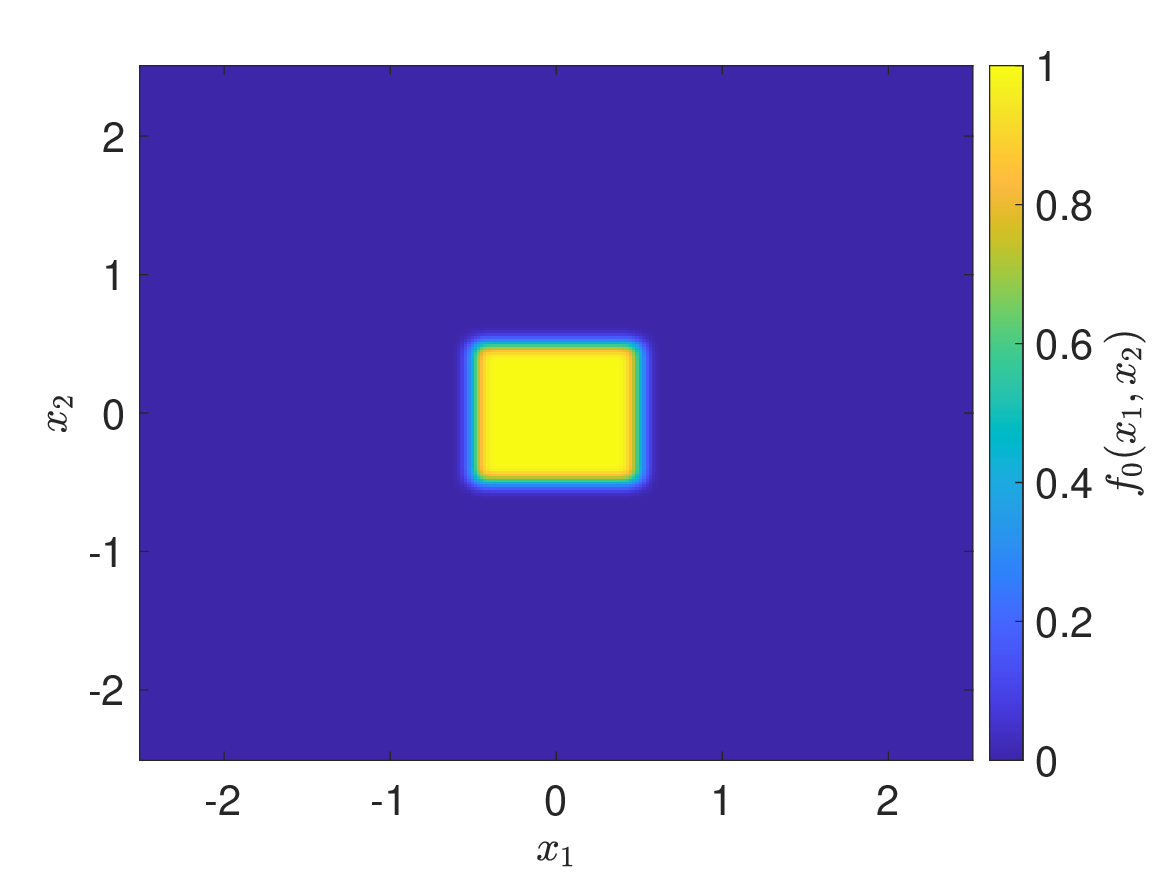}
    \caption{$f_0(x_1,x_2)$}
    \label{fig:wr_2d_fstar_at_0}
    \end{subfigure}
    \hfill
    \begin{subfigure}[b]{0.32\textwidth}
         \centering
         \includegraphics[width=\textwidth]{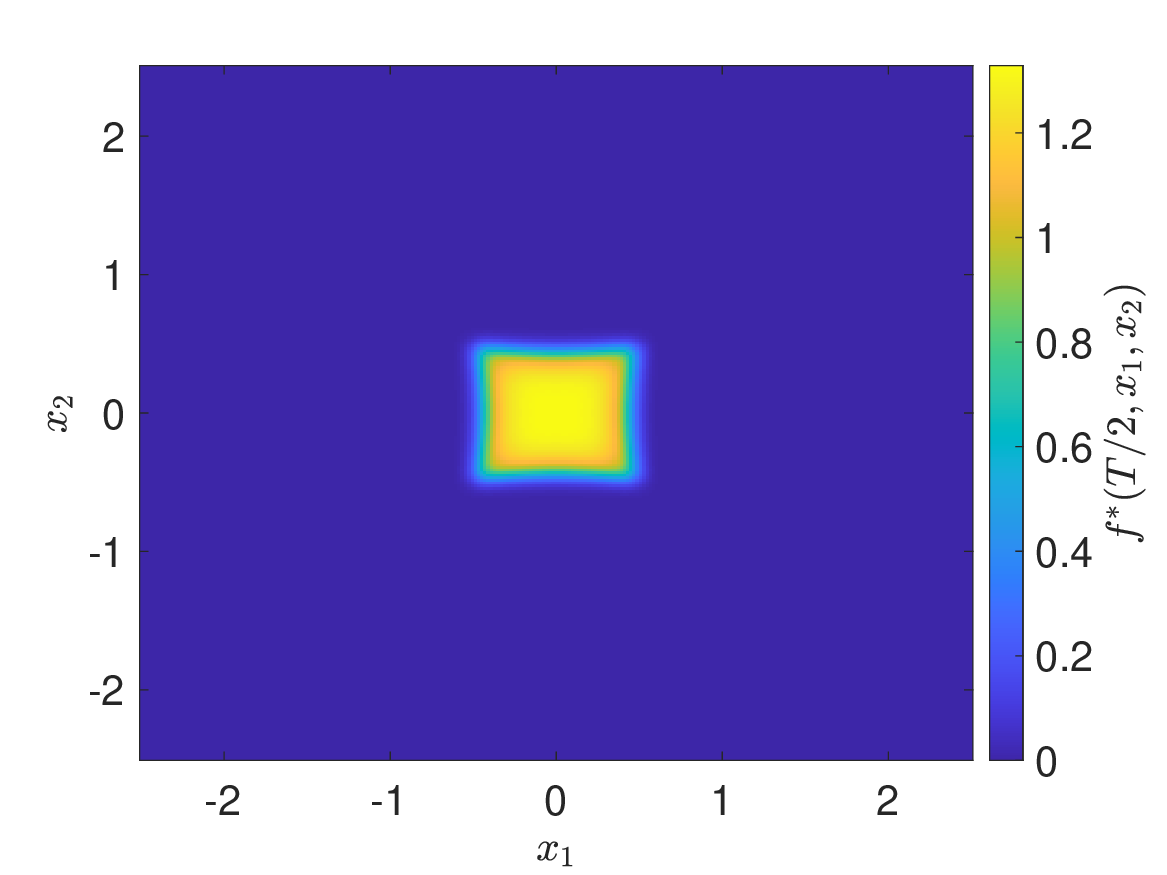}
    \caption{$f^\ast(t=T/2,x_1,x_2)$}
    \label{fig:wr_2d_fstar_at_halfT}
    \end{subfigure}
    \hfill
    \begin{subfigure}[b]{0.32\textwidth}
         \centering
         \includegraphics[width=\textwidth]{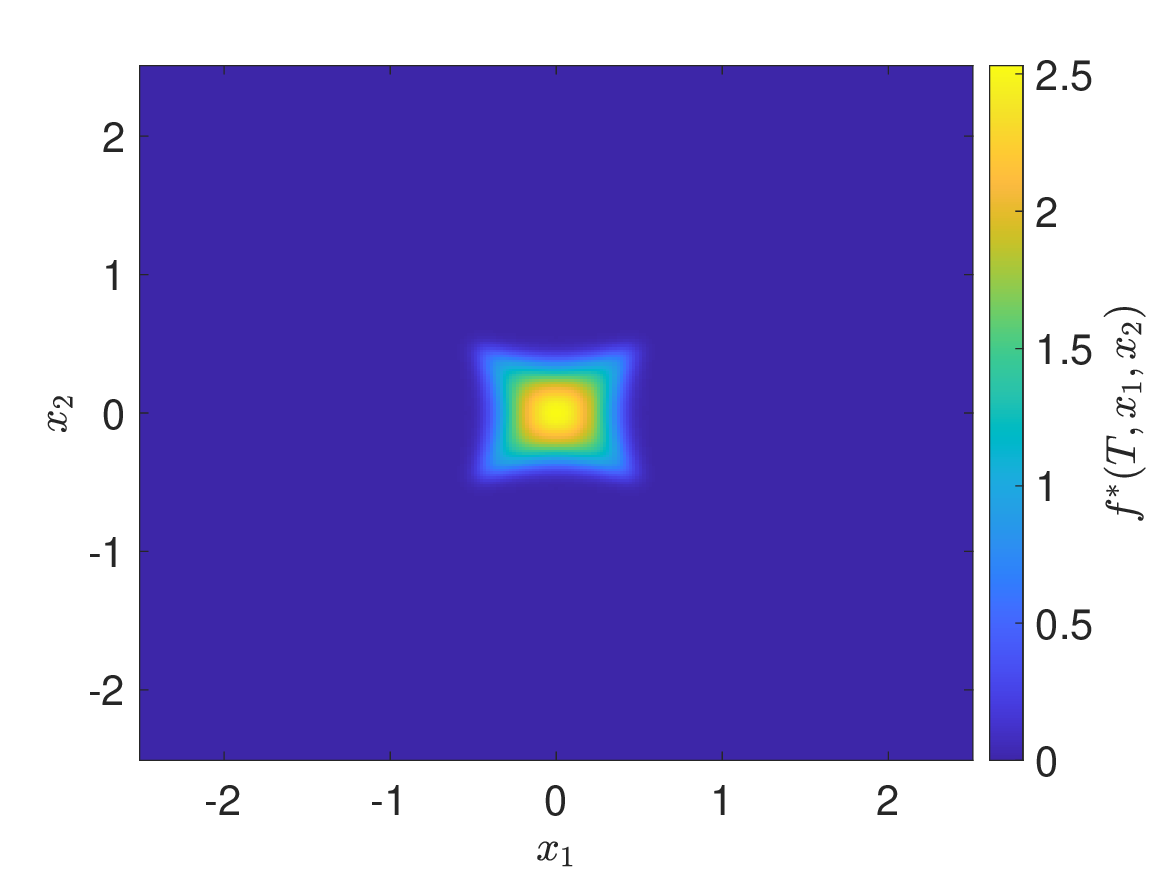}
    \caption{$f^{\ast}(t=T,x_1,x_2)$}
    \label{fig:wr_2d_fstar_at_T}
    \end{subfigure}
    \\
    \hspace{0.32\textwidth}
    \begin{subfigure}[b]{0.32\textwidth}
         \centering
         \includegraphics[width=\textwidth]{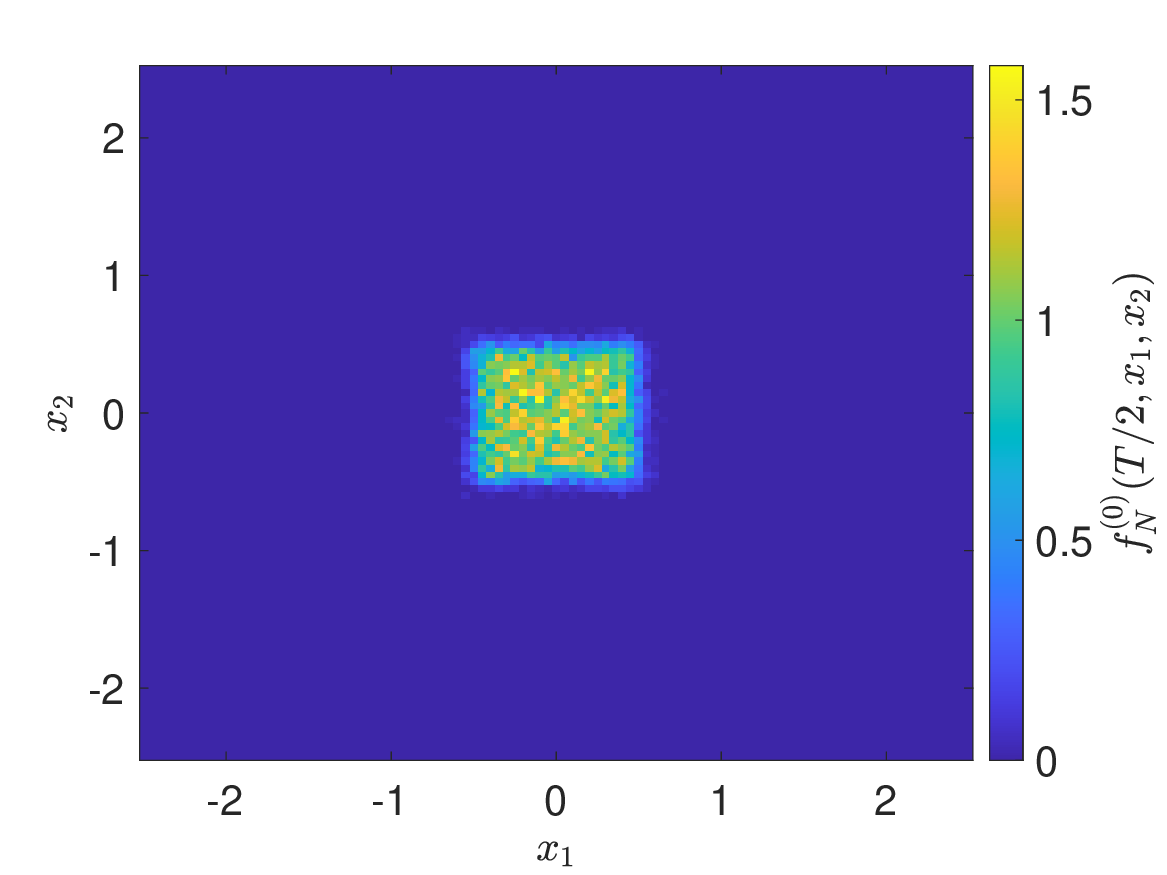}
    \caption{$f_N^{0}(t=T/2,x_1,x_2)$}
    \label{fig:wr_2d_fN0_at_halfT}
    \end{subfigure}
    \hfill
    \begin{subfigure}[b]{0.32\textwidth}
         \centering
         \includegraphics[width=\textwidth]{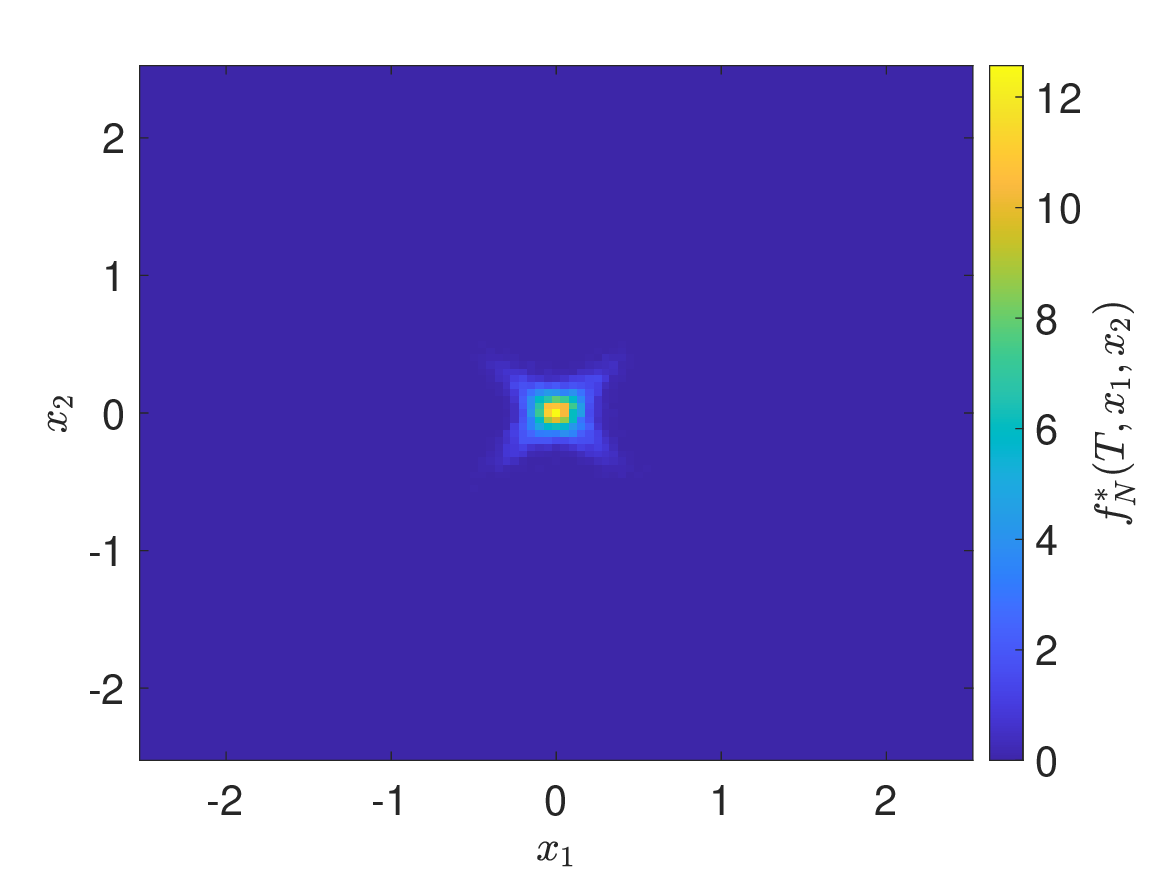}
    \caption{$f_N^{(0)}(t=T,x_1,x_2)$}
    \label{fig:wr_2d_fN0_at_T}
    \end{subfigure}
    \\
    \hspace{0.32\textwidth}
    \begin{subfigure}[b]{0.32\textwidth}
         \centering
         \includegraphics[width=\textwidth]{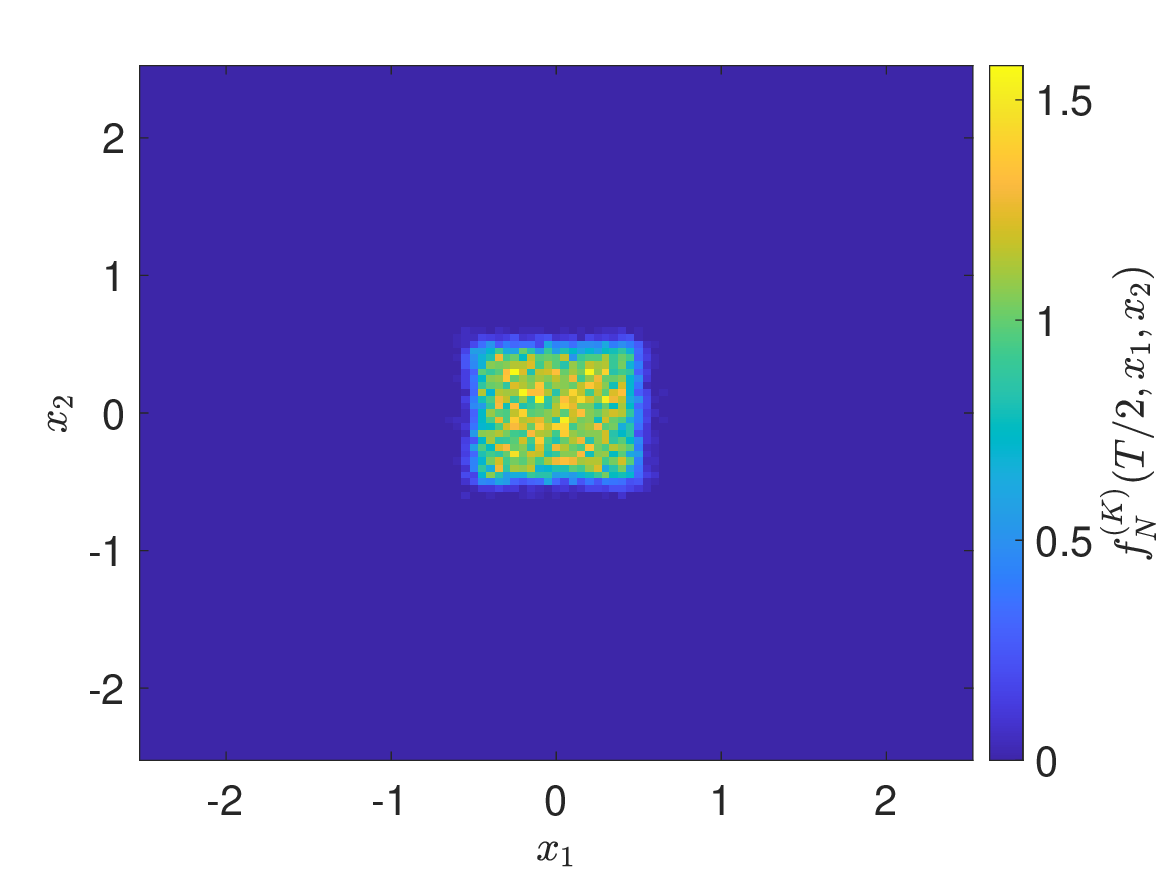}
    \caption{$f_N^{(K)}(t=T/2,x_1,x_2)$}
    \label{fig:wr_2d_fNK_at_halfT}
    \end{subfigure}
    \hfill
    \begin{subfigure}[b]{0.32\textwidth}
         \centering
         \includegraphics[width=\textwidth]{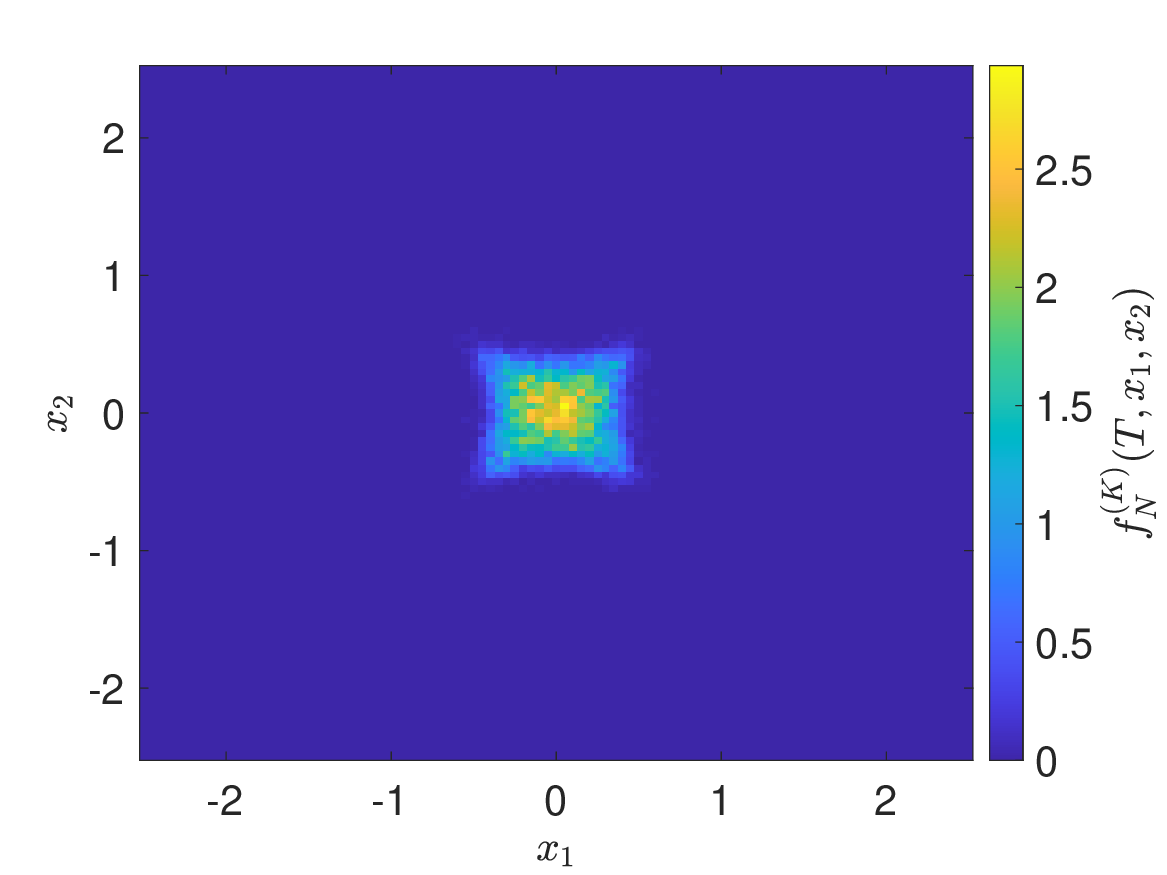}
    \caption{$f_N^{(K)}(t=T,x_1,x_2)$}
    \label{fig:wr_2d_fNK_at_T}
    \end{subfigure}
    \caption{Example~2: Comparison between the simulated distribution and the ground-true distribution (first row) at two different time ($T/2$ and $T$) using initial guess parameters (second row) and the optimized parameter (third row). Note that the simulated solution is produced as the histogram of the coupled ODE system~\eqref{eq:dX_wr_interact}, and the Monte Carlo fluctuation is unavoidable.}
    \label{fig:wr_2d_f_fN_dist}
\end{figure}

\section*{Acknowledgment.} 
P.~Chen and Q.~Li are supported by the National Science Foundation-DMS No.~2308440. P.~Chen is also supported by the Hirschfelder Scholar Fellowship award with funding from the Wisconsin Foundation \& Alumni Association and the Department of Mathematics at the University of Wisconsin-Madison. Q.~Li is further supported as Vilas Associate. L.~Wang is partially supported by National Science Foundation DMS-2513336.   Y.~Yang is partially supported by National Science Foundation under
Grant No.~DMS-2409855.

This material is based upon work supported by the National Science Foundation under Grant No.~DMS-2424139, while P.~Chen, Q.~Li and L.~Wang were in residence at the Simons Laufer Mathematical Sciences Institute in Berkeley, California, during the Fall 2025 semester.



\bibliographystyle{plain}
\bibliography{my_ref}

\appendix
\section{Proofs in Section~\ref{sec:linear_transport_equation}}\label{app:A}
\subsection{Proof of Lemma~\ref{prop:PDE_gradient}}
\begin{proof}
Let $g(t,x), g_0(x)$ be the Lagrange multipliers, and the Lagrangian is defined:
\begin{align*}
\mathcal{L}[a, f, f_{0},g,g_0]:=& \int \nu(x) f(T,x)\,\d x \\
& - \int_0^T\int \left[\partial_t f + \nabla_x\cdot (a(x)f)\right] g\,\d x\d t 
- \int \left(f(0,x) - f_{0}(x)\right) g_0(x)\,\d x\,.
\end{align*}
By integration by parts, we obtain:
\begin{align*}
\mathcal{L}[a, f,f_0,g,g_0]=& \int \left(\nu(x)-g(T,x)\right) f(T,x)\,\d x 
+ \int f(0,x) \left(g(0,x) - g_0(x)\right)\,\d x 
+ \int f_{0}(x) g_0(x)\,\d x\\
& + \int_0^T \int \left(\partial_t g + a(x) \cdot \nabla_x g \right) f \, \d x\d t \,.
\end{align*}
Then we take the variation of $\mathcal{L}$ with respect to each term:
\begin{align}
    \frac{\delta\mathcal{L}}{\delta f}(t,x) & = \partial_t g(t,x) + a(x)\cdot \nabla_x g(t,x)\,, \\
    \frac{\delta\mathcal{L}}{\delta f|_{t=T}}(x) & = \nu(x) - g(T,x)\,,
\end{align}
So the adjoint system~\eqref{eq:transport_adj_eq_g} is obtained by setting the above to be $0$.
Moreover, noticing that $\mathcal{L}$ is linear in $a$, the gradient is
\begin{equation}
\frac{\delta \mathcal{J}}{\delta a}(x) 
= \frac{\delta \mathcal{L}}{\delta a}(x) 
= \int_0^T (\nabla_x g) f\,\d t\,. 
\end{equation}
\end{proof}
\subsection{Proof of Lemma~\ref{prop:particle_gradient}}
\begin{proof}
Denote the Lagrange multipliers as $Y(t), Y_0$, and define the Lagrangian function as:
\begin{align*}
\mathsf{L}[a, X,X_0,Y,Y_0] := &\, \nu(X(T)) - \int_0^T \left[\dot{X}(t) -a(X(t))\right] Y(t)\,\d t - (X(0)-X_0)Y_0\,, \\
= &\, \nu(X(T)) - X(T)Y(T) + X(0)(Y(0) - Y_0) + X_0Y_0\\
& + \int_0^T X(t)\dot{Y}(t) + a(X(t)) Y(t)\,\d t \,,
\end{align*}
where the second line is obtained by integration by parts.
Then the variations of $\mathsf{L}$ with respect to each term can be computed:
\begin{align}
\frac{\delta \mathsf{L}}{\delta X}(t) & = \dot{Y}(t) + \nabla_x a(X(t))Y(t)\,,\\ 
\frac{\delta \mathsf{L}}{\delta X|_{t=T}} & = \nabla_x \nu(X(T)) - Y(T)\,,
\end{align}
and the adjoint system~\eqref{eq:transport_adjoint_Y} is obtained by imposing the above to $0$. 
Let $\tilde a$ be the perturbation of $a$, the perturbation of $\mathsf{L}$ induced by $\tilde{a}$ is
\begin{equation*}
\begin{aligned}
\mathsf{L}[a+\tilde a] - \mathsf{L}[a] & = \int_0^T \tilde a(X(t)) Y(t)\, \d t
= \int_0^T \int \tilde{a}(x) Y(t) \delta(x-X(t)) \, \d x \d t\,.
\end{aligned}
\end{equation*}
Hence, with $X(t)$ solving the forward problem~\eqref{eq:transport_forward_X}, we have
\begin{align*}
\frac{\delta \mathsf{J}}{\delta a}(x) = \frac{\delta \mathsf{L}}{\delta a}(x) & = \int_0^T Y(t,x) \delta(x-X(t)) \,\d t\,.
\end{align*}
\end{proof}

\section{Proofs in Section~\ref{sec:interaction}}\label{app:proof_thm_graderr}

\subsection{Proof of Proposition~\ref{thm:H_op_f_mean_field}}\label{subsec:pf_prop_31_deltah}
\begin{proof}
Denote $\delta h:=h_1-h_2$, where $h_1$ and $h_2$ solve~\eqref{eq:df_wr_mean_field} with $f_1$ and $f_2$ respectively. Then $\delta h$ satisfies:
\begin{equation}\label{eq:dt_deltah_At_Bt}
\begin{aligned}
    \partial_t \delta h + (w\ast f_1) \cdot \nabla_x \delta h 
    + \mathcal{A}_t[\delta h] 
    + \mathcal{B}_t[\delta f; h_2] = 0\,,\quad
    \delta h(T,x) = 0\,,
\end{aligned}
\end{equation}
where the linear operators $\mathcal{A}_t[\cdot], \mathcal{B}_t[\cdot]$ are:
\begin{equation*}
\begin{aligned}
\mathcal{A}_t[\delta h] &:= - \nabla w \ast (\delta h f_1) + (\nabla w \ast f_1) \delta h \,,\\
\mathcal{B}_t[\delta f; h_2] &:= (w\ast \delta f) \cdot \nabla_x h_2 - \nabla w \ast (\delta f h_2) + (\nabla w \ast \delta f) h_2\,.
\end{aligned}
\end{equation*}
Let $\xi_1(t)$ be the characteristics of~\eqref{eq:dt_deltah_At_Bt} solving $\frac{\d}{\d t}\xi_1(t) = (w\ast f_1)(t,\xi_1(t))$.
Confined to this characteristic, $\delta h$ solves:
\begin{equation}
\begin{aligned}
    \frac{\d}{\d t} \delta h(t,\xi_1(t))
    + \mathcal{A}_t[\delta h](t,\xi_1(t)) 
    + \mathcal{B}_t[\delta f; h_2](t,\xi_1(t)) & = 0\,,
    \quad \delta h(T,\xi_1(T)) = 0 \,.
\end{aligned}
\end{equation}
By Duhamel's principle,
\begin{equation}
\begin{aligned}
    \delta h(t,\xi_1(t))
    =\int_t^T \mathcal{A}_t[\delta h](s,\xi_1(s))\,\d s 
    + \int_t^T \mathcal{B}_t[\delta f; h_2](s,\xi_1(s))\,\d s\,.
\end{aligned}
\end{equation}
Since $w$ is Lipschitz (see Assumption~\ref{assump:property_w(r)}), for any fixed $(t,x)$, there exists a unique solution to the characteristic ODE $\xi_1(s), 0\leq s \leq T,$ such that $\xi_1(t)=x$. That is,
\begin{equation}\label{eq:deltah_At_Bt}
    \delta h(t,x)
    =\int_t^T \mathcal{A}_t[\delta h](s,\xi_1(s))\,\d s 
    + \int_t^T \mathcal{B}_t[\delta f; h_2](s,\xi_1(s))\,\d s\,.
\end{equation}
Note that as $f_1(t,\cdot)$ is a probability density, $\|f_1(t,\cdot)\|_{L^1_x}\equiv 1$, then 
\begin{equation}\label{eq:supx_Ath_Cbb}
\sup_{x}|\mathcal{A}_t[\delta h]|(t,x) \leq 2\Cbdd \sup_{x}|\delta h(t,x)|\,.
\end{equation}
As $\nabla_x h_2$ is bounded~\eqref{eq:u_on_xi_bdd}, 
the first term of $\mathcal{B}_t[\cdot]$ is bounded:
\begin{equation}\label{eq:supx_w_df_Clip}
\begin{aligned}
    \left|w \ast \delta f\right|(t,x) 
    = & \CLip \left|\E_{Z_1\sim f_1(t,\cdot)} \left[\frac{w(x-Z_1)}{\CLip}\right]
    - \E_{Z_2\sim f_2(t,\cdot)} \left[\frac{w(x-Z_2)}{\CLip}\right]\right|\,,\\
    \sup_x \left|w \ast \delta f\right|(t,x) \leq & \CLip \mathcal{W}_1(f_1(t,\cdot), f_2(t,\cdot))\,,
\end{aligned}
\end{equation}
where $\mathcal{W}_1(\cdot,\cdot)$ denotes the Wasserstein-$1$ distance and we use its duality definition. By Assumption~\ref{assump:property_w(r)}, $\frac{1}{\CLip} w(\cdot)$ is a $1$-Lipschitz function.
Similar estimates can be applied to other terms, and we have:
\begin{equation}
\begin{aligned}
    \sup_x |\mathcal{B}_t[\delta f; h_2]|(t,x) \leq & \CLip \mathcal{W}_1(f_1(t,\cdot), f_2(t,\cdot)) |\nabla_x h_2|(t,x) 
    + 2 \CLip \mathcal{W}_1(f_1(t,\cdot), f_2(t,\cdot)) |h_2|(t,x)\,,\\
    \leq & C(\CLip, \Cbdd, T, \nu) \mathcal{W}_1(f_1(t,\cdot), f_2(t,\cdot))\,.
\end{aligned}
\end{equation}
We refer to Lemma~\ref{thm:semigroup_At_h_xi_sol} and the proof of Proposition~\ref{prop:D2} for estimates of $h_2, \nabla_x h_2$.
Plugging~\eqref{eq:supx_Ath_Cbb} and~\eqref{eq:supx_w_df_Clip} into~\eqref{eq:deltah_At_Bt}, the following inequality can be obtained: 
\begin{equation}
\begin{aligned}
    |\delta h|(t,x)
    & \leq 2\Cbdd\int_t^T \sup_{y}|\delta h(s,y)| \,\d s 
    + \int_t^T \mathcal{B}_t[\delta f; h_2](s,\xi_1(s))\,\d s\,, \\
    & \leq 2\Cbdd \int_t^T \sup_{y}|\delta h(s,y)| \,\d s 
    + C(\CLip, \Cbdd, T, \nu) \int_t^T \mathcal{W}_1(f_1(s,\cdot), f_2(s,\cdot))\,\d s\,, \\
    \sup_{x} |\delta h|(t,x) & \leq C(\CLip, \Cbdd, T, \nu) \int_{t}^T \mathcal{W}_1(f_1(s,\cdot), f_2(s,\cdot))\,\d s\,,\\
    \E \sup_{x} |\delta h|(t,x) & \leq C(\CLip, \Cbdd, T, \nu) \int_{t}^T \E\mathcal{W}_1(f_1(s,\cdot), f_2(s,\cdot))\,\d s\,,
\end{aligned}
\end{equation}
thanks to Gronwall's inequality, where we use the face that $\int_t^T \mathcal{W}_1(f_1(s,\cdot), f_2(s,\cdot))\,\d s$ is a decreasing function in $t$, and the linearity of expectation.
By setting $f_1 = f_{N}, f_2 = f$ and applying~\eqref{eq:wass_dist_f_N_f},  we have that for every $(t,x)$,
\begin{equation}
    \E |\delta h|(t,x) \leq C(\CLip, \Cbdd, T, \nu) \sup_{0\leq s\leq T} \E\mathcal{W}_1(f_N(s,\cdot), f(s,\cdot)) \leq C N^{-\lambda} \to 0\,,\ \text{as } N\to\infty\,.
\end{equation}

\end{proof}

\subsection{Auxiliary lemmas for Theorem~\ref{thm:gradient_err_estimate}}\label{app:proofs_thm}

\begin{lemma}\label{thm:semigroup_At_h_xi_sol}
Let $h$ solve~\eqref{eq:nonlinear_adj_dxg_h}, then $h(t,x)$
is continuous in $x$ and bounded in $[0,T]\times \R^d$, i.e., there exists some positive constant $C$ such that
\begin{equation}\label{eq:h_tx_Linfty_bdd}
\|h\|_{L^{\infty}([0,T]; L^{\infty}(\R^d))} 
\leq C(T,\Cbdd, \|\nu\|_{W^{1,\infty}})\,.
\end{equation}
The same holds for $h_N$ solving~\eqref{eq:nonlinear_fN_induce_hN}.
\end{lemma}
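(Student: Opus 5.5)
We prove the bound for $h$ along characteristics; the argument never uses the specific form of $f$ beyond its being a probability measure at each time, so it applies verbatim to $h_N$. Fix $(t,x)\in[0,T]\times\R^d$ and let $\xi(s)$ solve $\dot\xi(s)=(w\ast f)(s,\xi(s))$ with $\xi(t)=x$. This characteristic exists uniquely on $[0,T]$ because $w\ast f$ is bounded by $\Cbdd$ and Lipschitz with constant $\CLip$ (since $\|f(s,\cdot)\|_{L^1}=1$). Along $\xi$, equation~\eqref{eq:nonlinear_adj_dxg_h} becomes the ODE
\[
\frac{\d}{\d s}h(s,\xi(s)) = -(\nabla w\ast f)(s,\xi(s))\,h(s,\xi(s)) + \nabla w\ast(hf)(s,\xi(s)),\qquad h(T,\xi(T))=\nabla_x\nu(\xi(T)).
\]
Integrating from $t$ to $T$ then gives the Duhamel representation
\[
h(t,x)=\nabla_x\nu(\xi(T))+\int_t^T\Big[(\nabla w\ast f)h-\nabla w\ast(hf)\Big](s,\xi(s))\,\d s .
\]

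Set $M(s):=\sup_y|h(s,y)|$. Each convolution term is controlled using only that $f(s,\cdot)$ is a probability measure:
\[
|(\nabla w\ast f)(s,y)|\le\Cbdd, \qquad |\nabla w\ast(hf)(s,y)|\le \Cbdd\,M(s).
\]
The integrand is therefore bounded by $2\Cbdd M(s)$. Taking the supremum over $x$ yields
\[
M(t)\le\|\nabla\nu\|_{L^\infty}+2\Cbdd\int_t^T M(s)\,\d s,
\]
and the backward Gronwall inequality gives
\[
M(t)\le\|\nu\|_{W^{1,\infty}}\,e^{2\Cbdd(T-t)}.
\]
This is exactly~\eqref{eq:h_tx_Linfty_bdd}.

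\textbf{Main obstacle.} The Gronwall step is only legitimate if $h$ is already known to exist and to be bounded, so that $M(s)$ is finite. The problem is that~\eqref{eq:nonlinear_adj_dxg_h} is nonlocal in $h$ through the term $\nabla w\ast(hf)$, and the characteristic ODE above is not closed on its own. I would resolve this with a fixed-point argument. Work in $X=C_b([t_0,T]\times\R^d;\R^d)$ and define $\Phi[k]$ by the right-hand side of the Duhamel formula with $h$ replaced by $k$. This map sends $X$ to $X$:
\begin{itemize}
\item continuity in $x$ follows from continuous dependence of $\xi$ on its initial point (Lipschitz velocity field);
\item continuity of $\nabla w\ast f$ and $\nabla w\ast(kf)$ follows from $\nabla w$ being Lipschitz;
\item continuity of $\nabla\nu$ holds since $\nu\in W^{1,\infty}$, and is assumed $C^1$ as needed.
\end{itemize}
Moreover $\Phi$ is a contraction when $T-t_0<1/(4\Cbdd)$. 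Because this interval length is independent of the data, iterating backward in time over finitely many such intervals gives a global solution on $[0,T]$ that is continuous in $x$; the a priori bound above then applies to it.

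For $h_N$, the measure $f_N$ enters~\eqref{eq:nonlinear_fN_induce_hN} only through the convolutions $w\ast f_N$, $\nabla w\ast f_N$ and $\nabla w\ast(h_Nf_N)$. Each of these obeys the same bounds, since $f_N(t,\cdot)$ is also a probability measure. The argument therefore transfers without change, with a constant independent of $N$.
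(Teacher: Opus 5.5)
Your proof is correct and follows the same route as the paper: you integrate along the characteristics of $w\ast f$, bound the nonlocal linear term $\nabla w\ast(hf)-(\nabla w\ast f)h$ by $2\Cbdd\sup_y|h|$ using only that $f(t,\cdot)$ (or $f_N(t,\cdot)$) has unit mass, and solve the resulting bounded linear evolution backward from $\nabla_x\nu$. The paper simply cites the uniformly continuous semigroup generated by the bounded operator $\mathcal{A}_t$; your Duhamel--Gronwall bound $\|\nu\|_{W^{1,\infty}}e^{2\Cbdd(T-t)}$ together with the contraction on $C_b([t_0,T]\times\R^d)$ is a more careful version of that same step, since it handles the time dependence of $\mathcal{A}_t$, the fact that the equation along a single characteristic is not closed, and the point evaluations $h_N(t,X_j(t))$, which need continuity rather than mere $L^\infty$ membership.
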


\begin{proof}[Proof of Lemma~\ref{thm:semigroup_At_h_xi_sol}]
Note that the characteristic of~\eqref{eq:nonlinear_adj_dxg_h} is $X(t)$ solving~\eqref{eq:dX_couple_w}. 
Since $w$ is globally Lipschitz (Assumption~\ref{assump:property_w(r)}) and $f\in L^1(\R^d)$ for all $t$, there exists a unique solution to~\eqref{eq:dX_couple_w}, $X(t;t_0,x_0)$, given any point on the trajectory $(t_0,x_0)$, such that $X(t_0;t_0,x_0)=x_0$, and $X(t;t_0,x_0)$ is continuous (stable) with respect to $x_0$.

Confined on the characteristic, $h(t,X(t))$ solves:
\begin{equation}\label{eq:dhdt_on_xi_Ah}
\frac{\d}{\d t} h(t,X(t)) = \mathcal{A}_t[h](t,X(t))\,,
\quad h(T,X(T)) = \nabla_x \nu(X(T))
\end{equation}
where $\mathcal{A}_t[\cdot]$ denotes the linear operator that 
\begin{equation}\label{eq:generator_def_At_h}
\mathcal{A}_t[h](t,x):=\nabla w\ast (h f)(t,x) - (\nabla w\ast f) h(t,x)\,.
\end{equation}
Then $\mathcal{A}_t[\cdot]: L^{\infty}([0,T],L^{\infty}(\R^d)) \to L^{\infty}([0,T],L^{\infty}(\R^d))$ is uniformly bounded. 
Indeed, for any $h(t,x)\in L^{\infty}([0,T],L^{\infty}(\R^d))$, the following holds:
\begin{equation*}
\begin{aligned}
\left|\mathcal{A}_t[h](t,x)\right| 
\leq & \left|\nabla w\ast (h f)(t,x)\right| + \left| (\nabla w\ast f)\right| |h(t,x)|\,,\\
\leq & \Cbdd \left|\int h(t,y) f(t,y)\, \d y\right| + \Cbdd \left|\int f(t,y)\,\d y \right| |h(t,x)|\,, 
\end{aligned}
\end{equation*}
where we recall that $\|\nabla w\|_{L^\infty}\leq \Cbdd$ from Assumption~\ref{assump:property_w(r)}. Since $\|f(t,\cdot)\|_{L_x^1}$ is conserved for all time, we can apply the H\"older inequality to the first term and obtain
\begin{equation*}
\left|\mathcal{A}_t[h](t,x)\right| \leq 
\Cbdd \|h(t,\cdot)\|_{L^{\infty}_x} \|f(t,\cdot)\|_{L^{1}_x} 
+ \Cbdd \| f(t,\cdot)\|_{L^{1}_x} |h(t,x)|\,.
\end{equation*}
Taking $\sup_{t,x}$ on both sides gives
\begin{equation*}
\sup_{t,x} |\mathcal{A}_t[h](t,x)| \leq C \sup_{t,x} |h(t,x)|\,,
\end{equation*}
here $C = 2\Cbdd\|f_0\|_{L^{1}_x}$. Therefore, $\mathcal{A}_t[h] \in L^{\infty}([0,T], L^{\infty}(\R^d))$ with $\|\mathcal{A}_t\|_{\mathrm{op}} \leq C$.

Moreover, this bounded linear operator $\mathcal{A}_t[\cdot]$ generates a linear, uniformly continuous semigroup $\mathcal{S}_t$~\cite{pazy1992semigroups} such that
\begin{equation}\label{eq:dS_semigroup}
\frac{\d \mathcal{S}_t}{\d t} = \mathcal{A}_t \mathcal{S}_t, \mathcal{S}_0 = I,
\text{with}\, \|\mathcal{S}_t\|_{\mathrm{op}}\leq C(T, \|\mathcal{A}_t\|_{\mathrm{op}})\,,
\end{equation}
and the unique solution to~\eqref{eq:dhdt_on_xi_Ah} is continuously differential, represented as
\begin{equation}\label{eq:h_on_xi_sol}
    h(t,X(t))=\mathcal{S}_{t-T}[h(T,X(T)] 
    = \mathcal{S}_{t-T}[\nabla_x \nu (X(T))]\,,
\end{equation}
by the given terminal time condition. The proof is concluded by choosing the characteristic to be $X(s) = X(s;t,x) $ for any given $(t,x)$.

Carrying the same argument for~\eqref{eq:nonlinear_fN_induce_hN} leads to $\|h_N\|_{L^{\infty}([0,T]; L^{\infty}(\R^d))} \leq C(T,\Cbdd, \|\nu\|_{W^{1,\infty}}).$
\end{proof}

\begin{lemma}\label{lem:deltah_sol}
Let $h$ and $h_N$ solve~\eqref{eq:nonlinear_adj_dxg_h} and~\eqref{eq:nonlinear_fN_induce_hN} respectively, and $X_i(t)$ solve~\eqref{eq:dX_wr_interact}.
Denote $\delta h(t,x) :=(h-h_N)(t,x)$, 
and $\delta f(t,x) :=(f-f_N)(t,x)$ then there exists some positive constant $C(\Cbdd, T)$, such that
\begin{equation}
\frac{1}{N}\sum_{i=1}^N |\delta h|(T-t, X_i(T-t))
\leq
C(\Cbdd, T)\int_0^{T-t} \frac{1}{N}\sum_{i=1}^N |S(s,X_i(s))|\,\d s\,,
\end{equation}
where $S$ is defined as in~\eqref{eq:H_dh_source_def}.
\end{lemma}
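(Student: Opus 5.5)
The strategy is to write the equation for $\delta h=h-h_N$ along the particle trajectories, which are the characteristics of the $h_N$ equation. Then we close a discrete Gronwall estimate for the particle average $\frac1N\sum_i|\delta h|(t,X_i(t))$. Subtracting \eqref{eq:nonlinear_fN_induce_hN} from \eqref{eq:nonlinear_adj_dxg_h}, and grouping the terms so that the transport field and the zeroth-order operator are those driven by $f_N$, gives
\begin{equation*}
\partial_t \delta h + (w\ast f_N)\cdot\nabla_x\delta h - \nabla w\ast(\delta h\, f_N) + (\nabla w\ast f_N)\,\delta h = S\,,\qquad \delta h(T,\cdot)=0\,.
\end{equation*}
Here $S$ is exactly the source \eqref{eq:H_dh_source_def}, up to sign conventions. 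Its three pieces come from $(w\ast\delta f)\cdot\nabla_x h$, from $\nabla w\ast(h_N\delta f)$, and from $(\nabla w\ast\delta f)h_N$, after moving the $f$-dependence onto $h$ or $h_N$ as the algebra dictates. I would first verify this regrouping carefully, since which of $h,h_N$ multiplies $\delta f$ in each term must match the definition of $S_1,S_2,S_3$.

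Next, I evaluate along $X_i(t)$. Since $\dot X_i=(w\ast f_N)(X_i)$ (the $j=i$ term vanishes because $w(0)=0$), the transport term is absorbed into the total derivative. The nonlocal term $\nabla w\ast(\delta h f_N)(X_i)=\frac1N\sum_j\nabla w(X_i-X_j)\delta h(t,X_j)$ involves only the values of $\delta h$ at particle positions. Hence the vector $e_i(t):=\delta h(t,X_i(t))$ satisfies the closed linear ODE system
\begin{equation*}
\dot e_i = \frac1N\sum_j \nabla w(X_i-X_j)\,e_j - \frac1N\sum_j\nabla w(X_i-X_j)\,e_i + S(t,X_i(t))\,,\qquad e_i(T)=0\,,
\end{equation*}
which has the same structure as \eqref{eq:nolinear_dhdt_Xi}. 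This is the key point: no supremum over $x$ is needed, unlike in the proof of Proposition~\ref{thm:H_op_f_mean_field}.

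Then I set $E(t):=\frac1N\sum_i|e_i(t)|$ and integrate backward from $T$. Using $|\nabla w|\le\Cbdd$, we get $|e_i(t)|\le\int_t^T\big(\Cbdd E(s)+\Cbdd|e_i(s)|+|S(s,X_i(s))|\big)\d s$. Averaging over $i$ yields $E(t)\le 2\Cbdd\int_t^T E\,\d s+\int_t^T\frac1N\sum_i|S(s,X_i(s))|\,\d s$. Backward Gronwall then gives $E(t)\le e^{2\Cbdd T}\int_t^T\frac1N\sum_i|S(s,X_i(s))|\d s$. Writing $t\mapsto T-t$ and bounding the integral over $[T-t,T]$ matches the stated form with $C=e^{2\Cbdd T}$, after the time-reversal relabeling used in the statement.

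The main obstacle is the algebraic bookkeeping in the first step: making the difference equation close on particle values requires the coefficient operator to be the $f_N$ one, with all $f$–$f_N$ discrepancies pushed into $S$. A secondary point is justifying differentiation of $\delta h$ along characteristics. This needs $h,h_N$ to be $C^1$ in $x$, for which I would invoke Lemma~\ref{thm:semigroup_At_h_xi_sol} and the gradient bound \eqref{eq:u_on_xi_bdd}. Alternatively, I would work directly with the mild (Duhamel) form along $X_i$, which avoids needing classical differentiability.
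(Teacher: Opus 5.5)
Your argument is the paper's proof. You subtract the two adjoint equations with the transport field and the nonlocal operator both driven by $f_N$, and restrict to the particle characteristics $X_i$ so that the system for $e_i=\delta h(t,X_i(t))$ closes on particle values. You then average over $i$ and apply Gr\"onwall backward from $T$; the paper does the same after substituting $\tau=T-t$.

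The one thing to fix is the bookkeeping you deferred, because it does not come out the way you assert. With the $f_N$-driven operator, use the identities $hf-h_Nf_N=\delta h\,f_N+h\,\delta f$ and $(\nabla w\ast f)h-(\nabla w\ast f_N)h_N=(\nabla w\ast f_N)\delta h+(\nabla w\ast\delta f)h$. They give the source $-(w\ast\delta f)\cdot\nabla_x h+\nabla w\ast(h\,\delta f)-(\nabla w\ast\delta f)h$. So $h$, not $h_N$, multiplies $\delta f$ in the last two pieces.

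You can keep $h_N$ in $S_3$ by using the local coefficient $\nabla w\ast f$ instead; it is still bounded by $\Cbdd$ and the system still closes at $X_i$. You cannot keep $h_N$ in $S_2$. The only split of $hf-h_Nf_N$ whose $\delta h$-part is tested against $f_N$ leaves $h\,\delta f$. The other split, $\delta h\,f+h_N\,\delta f$, produces $\nabla w\ast(\delta h\,f)(X_i)$, which involves $\delta h$ off the particles, so the ODE system for the $e_i$ no longer closes.

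The paper's definition of $H$ makes the same substitution. What your argument, and the paper's, actually proves is the lemma with $S_2=\nabla w\ast(h\,\delta f)$. That version is also the better one downstream: $\nabla w(x-\cdot)h(t,\cdot)$ is a deterministic $C_b^1$ test function, so $\E|S_2(t,x)|^2\le C/N$ follows directly from Theorem~\ref{thm:mean_field_fN_f}(i). With the random $h_N$ inside the convolution, that theorem does not apply as written.
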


\begin{proof}[Proof of Lemma~\ref{lem:deltah_sol}]
By taking the difference between~\eqref{eq:nonlinear_adj_dxg_h} and~\eqref{eq:nonlinear_fN_induce_hN}, 
$\delta h$ solves
\begin{equation}\label{eq:deltah_At_H}
\partial_t \delta h 
+ (w\ast f_N) \nabla_x \delta h 
= \mathcal{A}_t[\delta h]
+ H\,, 
\quad \delta h (T,x) =0
\end{equation}
where $\mathcal{A}_t[\cdot]$ is a linear operator, defined as 
\begin{equation}\label{eq:At_deltah_def}
\mathcal{A}_t[\delta h]:=
\nabla w\ast (\delta{h} f_N)(t,x) - (\nabla w\ast f_N) \delta{h}(t,x)\,,
\end{equation}
and the source term is
\begin{equation}\label{eq:source_term_def}
H(t,x):= -(w\ast \delta f) \nabla_x h
+ \nabla w\ast (h_N\delta f) 
- (\nabla w \ast \delta f) h_N \,.
\end{equation}

Define the backward time variable $\tau := T - t$ and write each term of~\eqref{eq:deltah_At_H} in terms of $\tau$,
then $\delta h$ solves
\begin{equation}\label{eq:delta_h}
\begin{cases}
-\partial_\tau \delta h(\tau,x)
+ (w * f_N)(\tau,x)\cdot\nabla_x \delta h(\tau,x)
=
\mathcal{A}_\tau[\delta h](\tau,x) + H(\tau,x),
\quad 0\leq \tau \leq T
\\
\delta h(0,x) = 0.
\end{cases}
\end{equation}
The characteristics of $\delta h$ is $X_i(\tau) = X_i(T-t)$,
indeed,
\begin{equation}
\frac{\d}{\d\tau} X_i(\tau)
=
-\frac{\d}{\d t} X_i(T-t)
=
-\frac{1}{N}\sum_{j=1}^N w(X_i - X_j)
=
-(w * f_N)(\tau, X_i(\tau))\,.
\end{equation}
Confined on the characteristics, the equation of $\delta{h}$~\eqref{eq:delta_h} becomes
\begin{equation}\label{eq:delta_h_on_Xi}
\frac{\d}{\d\tau}
\delta h(\tau,X_i(\tau))
=
-\mathcal{A}_\tau[\delta h](\tau,X_i(\tau))
- H(\tau,X_i(\tau)).
\end{equation}
By evaluating~\eqref{eq:At_deltah_def} and~\eqref{eq:source_term_def} on $(\tau,X_i(\tau))$, $|\delta h|$ satisfies:
\begin{equation}
\begin{aligned}
\frac{\d}{\d\tau}|\delta h(\tau,X_i(\tau))|
\le & 
\frac{1}{N}\sum_{j=1}^N
|\nabla w(X_i(\tau)-X_j(\tau))|\,
|\delta h(\tau,X_j(\tau))|
\\
&+
\frac{1}{N}\sum_{j=1}^N
|\nabla w(X_i(\tau)-X_j(\tau))|\,
|\delta h(\tau,X_i(\tau))|
+ |H(\tau,X_i(\tau))|\,,
\end{aligned}
\end{equation}
where we substitute $f_N(\tau,x)$ by its definition $\frac{1}{N}\sum_{j=1}^N \delta(x-X_j(\tau))$.
Averaging over $N$ trajectories $\{X_i(\tau)\}_{i=1}^N$ gives
\begin{equation}\label{eq:dtau_dsumi_h}
\begin{aligned}
\frac{\d}{\d\tau}
\left(
\frac{1}{N}\sum_{i=1}^N |\delta h(\tau,X_i(\tau))|
\right)
\le\;&
\frac{1}{N^2}\sum_{i,j}
|\nabla w(X_i(\tau)-X_j(\tau))|\,
|\delta h(\tau,X_j(\tau))|
\\
&+
\frac{1}{N^2}\sum_{i,j}
|\nabla w(X_i(\tau)-X_j(\tau))|\,
|\delta h(\tau,X_i(\tau))|
+
\frac{1}{N}\sum_i |H(\tau,X_i(\tau))|\,.
\end{aligned}
\end{equation}
According to the boundedness assumption on $w$ (Assumption~\ref{assump:property_w(r)}), and denote
$
S(\tau):=\frac{1}{N}\sum_{i=1}^N |\delta h(\tau,X_i(\tau))|\,,
$
the above inequality~\eqref{eq:dtau_dsumi_h} is rewritten into
\begin{equation}
\frac{\d}{\d\tau} S(\tau)
\le
2\Cbdd \,S(\tau)
+
\frac{1}{N}\sum_{i=1}^N |H(\tau,X_i(\tau))|\,.
\end{equation}
By Grönwall's inequality,
\begin{equation}
S(\tau)
\le
\int_0^\tau
e^{2 \Cbdd (\tau-s)}
\frac{1}{N}\sum_{i=1}^N |H(s,X_i(s))|\,\d s\,,
\end{equation}
since $S(0)=0$, and the proof is concluded by the change of variable $\tau=T-t$.
\end{proof}

\end{document}